\numberwithin{equation}{section}
\newcommand{\n}{^{(1)}}
\newcommand{\m}{^{(2)}}
\newtheorem{pred}{Theorem}
\newtheorem{predl}{Lemma}
\newtheorem{predll}{Definition}
\begin{document}

\sloppy

\renewcommand{\proofname}{Proof}
\renewcommand{\abstractname}{Abstract}

\thispagestyle{empty}
\begin{center}
{\textbf{MIXED VARIATIONAL APPROACH TO FINDING GUARANTEED ESTIMATES FROM SOLUTIONS AND RIGHT-HAND SIDES OF
THE SECOND-ORDER LINEAR ELLIPTIC EQUATIONS UNDER INCOMPLETE DATA}}
\end{center}
\bigskip\bigskip\bigskip\bigskip\bigskip\bigskip\bigskip\bigskip\bigskip\bigskip\bigskip

\begin{center}
	Yuri Podlipenko$^1$, \\[3pt] Yury Shestopalov$^2$ \\
\end{center}

\begin{center}
$^1$ 
			National Taras Shevchenko University of Kiev, Kiev, Ukraine
			\\ yourip@mail.ru\\
$^2$
G\"avle University, G\"avle, Sweden
\\ yuyshv@hig.se
\end{center}

\renewcommand{\contentsname}%
{\begin{center}Contents\end{center}}

\newpage


\tableofcontents

\newpage


\begin{abstract}
We 
investigate the problem of guaranteed estimation
of values of linear continuous functionals
defined on solutions
to mixed variational equations generated by
linear elliptic problems
from indirect noisy observations of these solutions.

We assume that right-hand sides of the equations, as well as the second moments of noises in observations
are not known; the only available information is that they belong to given bounded sets in the appropriate functional spaces.

We are looking for linear with respect to observations optimal estimates
of solutions
of aforementioned equations called minimax or guaranteed estimates. We develop constructive methods for finding these estimates and
estimation errors which are expressed in terms of solutions to special mixed variational
equations and prove that Galerkin approximations of the obtained variational
equations converge to their exact solutions.

We study also  the problem of guaranteed estimation
of right-hand sides of mixed variational equations.
\end{abstract}


\section*{}

\addcontentsline{toc}{section}{Introduction}

\begin{center}
{\textbf{Introduction}}
\end{center}


Estimation theory for systems with lumped and distributed parameters under
uncertainty conditions was developed intensively during the last 30 years. That
was motivated by the fact that the realistic setting of boundary value problems
describing physical processes often contains perturbations of unknown (or partially
unknown) nature. In such cases the minimax estimation method proved to be
useful, making it possible to obtain optimal estimates both for the unknown
solutions (or right-hand sides of equations appearing in the boundary value
problems) and for linear functionals from them, that is estimates looked for in
the class of linear estimates with respect to observations
	Here we understand observations of unknown solutions as the functions that are linear transformations of same solutions distorted by additive random noises.
for which the maximal
mean square error taken over all the realizations of perturbations from certain
given sets takes its minimal value.

The above estimation method was investigated in the works by N. N. Krasovsky,
A. B. Kurzhansky, A. G. Nakonechny, and others (see \cite{BIBLkras}, \cite{BIBLkurzh}, \cite{BIBLnak1}--\cite{nak2}, \cite{Pod}). This approach makes it possible to find optimal
estimates of parameters of boundary value problems reckoning on the "worst"
realizations of perturbations.

A. G. Nakonechny used traditional variational formulations of boundary value
problems (their solvability is based on the Lax-Milgram lemma), to obtain systems
of variational equations whose solutions generate the minimax mean square
estimates.

At the same time many physical processes of the real world are described by
mixed variational problems. Among such processes, there are flows of viscous fluids,
propagation of electromagnetic and acoustical waves. In addition, many classical
boundary value problems admit mixed variational formulations. The mixed method
consists of simultaneous finding, from systems of variational equations, both
solutions and certain expression generated by solutions taken as new auxiliary
unknowns. As a rule, these unknowns are related to derivatives of the solutions
and have important physical meaning (such as flux, bending moment etc), and
their calculation or estimation often has even greater practical significance.

The theory of mixed variational methods of solving boundary value problems
and their numerical implementation, the mixed finite element methods, was
developed by Babu\v{s}ka, Brezzi, Fortin, Raviard, Glowinski and others (see \cite{Brezzi}
-\cite{Gatica}). In particular,
Brezzi and Fortin proved solvability theorems for a wide class of mixed variational
problems and their discrete analogs.

In this paper we show that mixed variational formulations of boundary value
problems can be used also for a guaranteed estimation of linear functionals from unknown
solutions and their gradients, as well as functionals from unknown right-hand sides
of second order linear elliptic equations. It is proved that guaranteed estimates of
these functionals and estimation errors are expressed explicitly from the solutions
of special systems of mixed variational equations, for which the unique solvability
is established. We develop, on the basis of the Galerkin method, numerical methods
of finding these solutions and prove the convergence of the approximate solutions
to exact ones.

The estimation methods proposed here yield, for example, in stationary and
non-stationary heat conduction problems, estimates of heat flux from temperature
observations, or conversely, estimates of temperature from heat flux observations,
as well as estimates of the unknown distribution of density of sources from heat flux
observations. The theory of guaranteed estimation developed in this work provides
an essential generalization of well-known results in this direction by the authors
mentioned above.

Note that the available estimation methods do not provide solution of such
estimation problems, so that the methods developed here are essentially new.

\section{Preliminaries and auxiliary results}

Let us introduce the notations and definitions that will be used in this work.

We denote matrices
and vectors by bold
letters;
$x=(x_1,\dots ,x_n)$ denotes a spatial variable in an open domain $D \subset \mathbb R^n$  with
Lipschitzian boundary $\Gamma;$ $dx=dx_1\cdots dx_n$ is Lebesgue measure in $\mathbb R^n;$ $H^1(D)$ and $H_0^1(D)$ are standard Sobolev spaces of the first order
in the domain $D$ with corresponding norm.

If $X$ is a Hilbert space over $\mathbb R$
with inner product $(\cdot,\cdot)_{X}$ and norm
$\|\cdot\|_{X},$ then $J_{X}\in \mathcal L(X,X')$ denotes the Riesz operator acting from $X$ to its adjoint
$X'$ and determined by the equality (we note that this operator  exists according to the Riesz theorem)
$
(v,u)_{X}=$ $<v,J_{X} u>_{X\times X'}$ $\forall u,v\in
X,$ where $<x, f>_{X\times X'}$ $:=f(x)$ for $x\in X,$ $f\in
X'.$

Below random variable $\xi$ with values in a separable Hilbert space $X$ is considered as a function $\xi:\Omega\to X$ mapping random events  $E\in\mathcal B$
to Borel sets in $H$ (Borel $\sigma$-algebra in $X$
is generated by open sets in $X$). By $L^2(\Omega,X)$ we denote the Bochner space composed of random
 variables $\xi=\xi(\omega)$ defined on a certain probability space $(\Omega, \mathcal B, P)$ with values in
a separable Hilbert space $X$ such that
\begin{equation}\label{rand}
\|\xi\|_{L^2(\Omega,X)}^2
=\int_{\Omega}\|\xi(\omega)\|_X^2dP(\omega)<\infty.
\end{equation}
In this case there exists the Bochner integral
\begin{equation}\label{rand'}
\mathbb
E\xi:=\int_{\Omega}\xi(\omega)\,dP(\omega)\in X
\end{equation}
called the mathematical expectation or the mean value of random variable
$\xi(\omega)$ which satisfies the condition
\begin{equation}\label{rand1}
(h,\mathbb
E\xi)_X=\int_{\Omega}(h,\xi(\omega))_X\,dP(\omega)\quad\forall
h\in X.
\end{equation}
Being applied to random variable $\xi$ with values in $\mathbb R$ this expression leads to a usual definition of its mathematical expectation because the Bochner integral \eqref{rand'} reduces to a Lebesgue integral with probability measure $dP(\omega).$

In $L^2(\Omega,X)$ one can introduce the inner product
\begin{equation}\label{rand2}
(\xi,\eta)_{L^2(\Omega,X)}:=\int_{\Omega}(\xi(\omega),
\eta(\omega))_X\,dP(\omega)\quad\forall\xi,\eta\in L^2(\Omega,X).
\end{equation}
Applying the sign of mathematical expectation, one can write relationships \eqref{rand}$-$\eqref{rand2} as
\begin{equation}\label{rand3}
\|\xi\|_{L^2(\Omega,X)}^2 =\mathbb E\|\xi(\omega)\|_X^2,
\end{equation}
\begin{equation}\label{rand4}
(h,\mathbb E\xi)_X=\mathbb E(h,\xi(\omega))_X\quad\forall h\in X,
\end{equation}
\begin{equation}\label{rand5}
(\xi,\eta)_{L^2(\Omega,X)}:=\mathbb E(\xi(\omega),
\eta(\omega))_X\quad\forall\xi,\eta\in L^2(\Omega,X).
\end{equation}
$L^2(\Omega,X)$ equipped with norm \eqref{rand3} and inner product \eqref{rand5} is a Hilbert space.

\section{Statement of the estimation problem of linear functionals from solutions to mixed variational equations}

Let the state of a system be characterized by the function
$\varphi(x)$ which is defined as a solution of the Dirichlet boundary value problem:
\begin{equation} \label{1}
-\mbox{div\,}(\mathbf A\,\mbox{\bf grad\,}\varphi)+c\varphi=f\quad \mbox
{in}\quad D,
\end{equation}
\begin{equation} \label{3a}
\varphi=0 \quad \mbox {on}\quad \Gamma.
\end{equation}
Introducing the additional unknown $\mathbf j=-\mathbf A\,\mbox{\bf grad\,}\varphi$ in $D,$
rewrite this problem as the first-order system
\begin{equation}\label{eqsystem1}
\mathbf A^{-1}\,\mathbf j=-\mbox{\bf grad\,}\varphi\quad \mbox
{in}\quad D,
\end{equation}
\begin{equation}\label{eqsystem2}
\mbox{div\,}\mathbf j+c\varphi=f\quad \mbox {в}\quad D,\quad
\varphi=0 \quad \mbox {on}\quad \Gamma,
\end{equation}
where $\mathbf A=\mathbf A(x)=(a_{ij}(x))$ is an $n\times n$ matrix with entries $a_{ij}\in L^{\infty}(D)$
for which there exists a positive number
$\mu$ such that
\begin{equation*}\label{matrb}
\mu\sum_{i=1}^n\xi_i^2\leq\sum_{i,j=1}^na_{ij}(x)\xi_i\xi_j \quad\forall x\in
D\quad \forall \xi=(\xi_1,\ldots,\xi_n)^T\in\mathbb R^n,
\end{equation*}
$\mathbf A^{-1}$ is the inverse matrix of $\mathbf A,$
and $c$ is a piecewise continuous function satisfying for $x\in D$ the inequality $c_0\leq c(x) \leq c_1,$
$c_0,c_1=\mbox{const},\,0\leq c_0\leq c_1.$

According to \cite{Brezzi} and \cite{Hoppe}, by a solution of problem \eqref{eqsystem1}, \eqref{eqsystem2} we will mean a pair of functions
$(\mathbf j,\varphi)\in H(\mbox{div};D)\times L^2(\Omega)$ such that
\begin{equation}\label{generalized}
\int_D\!((\mathbf A(x))^{-1}\mathbf j(x),\mathbf q(x))_{\mathbb
R^n}dx-\int_D\varphi(x)\mbox{div}\, \mathbf
q(x)dx=0\quad \forall \mathbf q\in H(\mbox{div};D)
\end{equation}
\begin{equation}\label{generalized1}
\int_Dv(x)\mbox{div}\,\mathbf j(x)dx+\int_Dc(x)\varphi (x) v(x)dx=\int_Df(x)v(x)\,dx\,\,\forall v\in
L^2(D).
\end{equation}
Note that from equations \eqref{generalized} and \eqref{generalized1} it follows that
$\varphi\in H_0^1(D),$ i. e. the boundary condition $\varphi|_{\Gamma}=0$
is implicitly contained in these equations.

Problem \eqref{generalized}, \eqref{generalized1} is commonly referred to as the mixed formulation of \eqref{eqsystem1}, \eqref{eqsystem2}.

From physical point of view problem
\eqref{eqsystem1}, \eqref{eqsystem2} simulates a stationary process of the propagation of heat
in the domain $D,$ and the functions $\varphi(x),$ $\mathbf j(x),$ and $f(x)$ have the sense of temperature, heat flux, and volume density of heat sources, respectively,
 at the point $x.$

Introduce the bilinear forms $a,b,c$ and the functional $l$ given by
\begin{equation} \label{a}
a(\mathbf q_1,\mathbf q_2):=\int_D ((\mathbf A(x))^{-1}\mathbf q_1,\mathbf q_2)_{\mathbb R^n}dx \quad \forall \mathbf q_1,\mathbf q_2\in H(\rm div;D),
\end{equation}
\begin{equation} \label{b}
b(\mathbf q,v):= -\int_D v\,\mbox{\rm div}\,\mathbf q\,dx \quad \forall \mathbf q\in H(\rm div;D), v\in L^2(D),
\end{equation}
\begin{equation} \label{c}
c(v,v):=\int_D c(x)v_1(x)v_2(x)\,dx\quad \forall v_1,v_2\in L^2(D),
\end{equation}
\begin{equation} \label{l}
l(v):=-\int_D fv\,dx \quad\forall v\in L^2(D).
\end{equation}
Then the problem under study may be stated as follows.

Find $(\mathbf j,\varphi)\in H(\mbox{div};D)\times L^2(\Omega)$ such that
\begin{equation} \label{eqn2}
a(\mathbf j,\mathbf q)+b(\mathbf q,\varphi)=0\quad \forall \mathbf q \in H(\mbox{div};D),
\end{equation}
\begin{equation} \label{eqn2'}
b(\mathbf j,v)- c(\varphi,v)=l(v)\quad\forall v\in L^2(D).
\end{equation}

Denote by $B:H(\rm div;D)\to L^2(D)$ the operator associated with the bilinear form $b$.
It is easy to see that $a,b$ and $c$ are continuous bilinear forms with $a$ being
coercive on $\rm Ker\,B$, $c$ being symmetric, positive semidefinite and $b$ satisfying the standard
inf-sup condition (Brezzi condition). Since $\mbox{\rm Im}\,B=L^2(D)$, we have $\mbox{\rm Ker}\, B^t
=\{\emptyset\},$ where $B^t
:L^2(D)\to H(\mbox{div};D)'$ is the transpose operator of $B$ defined by 
$$
<v,B^tq>_{H(\mbox{\rm div},D)\times H(\mbox{\rm\small div},D)'}=b(v,q)\quad\forall v\in H(\mbox{\rm div},D), \quad\forall q\in L^2(D).
$$
Consequently, it follows from Theorem 1.2 of \cite{Brezzi} that problem \eqref{eqn2}, \eqref{eqn2'} has a unique solution and the following {\textit {a priori}} estimate
is valid
\begin{equation*}\label{aprest}
\|\mathbf j\|_{H(\mbox{\small div};D)}+\|\varphi\|_{L^2(D)}\leq C\|f\|_{L^2(D)}  \quad (C=\mbox{const}).
\end{equation*}

Further we assume  that the function $f(x)$ in equations
\eqref{eqsystem2} and
\eqref{generalized1} is unknown and belongs to the set
\begin{equation}\label{h11h}
G_0:=\Bigl\{\tilde f\in  L^2(D) :
\Bigl( Q(\tilde f-f_0),\tilde
f-f_0\Bigr)_{L^2(D)} \leq \epsilon_1\Bigr\},
\end{equation}
where $f_0\in L^2(D)$ is a given function, $\epsilon_1>0$ is a given constant,\label{s5}
and $Q:L^2(D)\to L^2(D)$  is a bounded
positive selfadjoint operator for which there exists the inverse bounded operator $Q^{-1}.$
It is known that the operator $Q^{-1}$ is  positive and selfadjoint.

In this paper we focus on the following estimation problem:

From observations of random variables
\begin{equation}\label{observ}
y_1=C_1\mathbf j+\eta_1, \quad
y_2=C_2\varphi+\eta_2,
\end{equation}
with values in separable Hilbert spaces $H_1$ and $H_2$ over
$\mathbb R,$ respectively, it is necessary to estimate the value of the linear functional
\begin{equation}\label{linf}
l(\mathbf j, \varphi):=  \int_{D}(\mathbf l_1(x),\mathbf
j(x))_{\mathbb R^n}\, dx+\int_{D}l_2(x)\varphi(x)\, dx
\end{equation}
in the class of the estimates linear with respect to observations, which have the
form
\begin{equation}\label{clas}
\widehat{l(\mathbf j, \varphi)}:=(y_1,u_1)_{H_1}
+(y_2,u_2)_{H_2}+c,
\end{equation}
where $(\mathbf j,\varphi)$ is a solution of problem
\eqref{generalized}, \eqref{generalized1}, $\mathbf l_1$ and $l_2$
are given functions from $L^2(D)^n$ and $L^2(D),$ $u_1\in H_1,$ $u_2\in
H_2,$ $c\in \mathbb R,$ $C_1\in\mathcal L(L^2(D)^n,H_1)$, and
$C_2\in\mathcal L(L^2(D),H_2)$ are linear bounded operators,
\begin{equation}\label{etaG_1}
\eta:=(\eta_1,\eta_2)\in G_1;
\end{equation}
by $G_1$ we denote the set of pairs $\{(\tilde\eta_1,\tilde\eta_2)\}$ of uncorrelated 
random variables
 $\tilde \eta_1\in L^2(\Omega,H_1)$ and $\tilde
\eta_2\in L^2(\Omega,H_2)$ with zero expectations satisfying the condition
\begin{equation}\label{restr2}
\mathbb E(\tilde Q_1\tilde \eta_1,\tilde \eta_1)_{H_1} \leq \epsilon_2,\quad \mathbb
E(\tilde Q_2\tilde \eta_2,\tilde \eta_2)_{H_2} \leq \epsilon_3,
\end{equation}
 where
$\tilde Q_1$ and $\tilde Q_2$ are bounded positive-definite selfadjoint operators in $H_1$ and $H_2,$ respectively, for which there exist the inverse bounded operators $\tilde Q_1^{-1}$ and $\tilde Q_2^{-1}.$ \label{s6}
We note that random variables $\xi_1\in H_1$ and $\xi_2\in
	H_2$ are called uncorrelated if
	\begin{equation}\label{nekor}
		\mathbb E(\xi_1,u_1)_{H_1}(\xi_2,u_2)_{H_2}=0\quad\forall u_1\in H_1, u_2 \in H_2
	\end{equation}
	(see, for example, \cite{Vah}, p. 146).

It is known that  operators $\tilde Q_1^{-1}$ and $\tilde Q_2^{-1}$ are  positive definite and selfadjoint, that is there exists a positive number $\alpha$ such that
\begin{equation}\label{posdefme}
\!\!(\tilde Q_1^{-1}u_1,u_1)_{H_1}\geq \alpha \|u_1\|_{H_1}^2\,\,\forall u_1\in H_1, \,\,
(\tilde Q_2^{-1}u_2,u_2)_{H_2}\geq \alpha \|u_2\|_{H_2}^2\,\,\forall u_2\in H_2.
\end{equation}
Set $u:=(u_1,u_2)\in H:=H_1\times H_2$.

\begin{predll} An estimate
\begin{equation*}\label{defme}
\widehat{\widehat {l(\mathbf j,\varphi)}}=(y_1,\hat u_1)_{H_1}+(y_2,\hat
u_2)_{H_2}+\hat c
\end{equation*}
is called a guaranteed (or minimax) estimate of $l(\mathbf j,\varphi),$ if elements $\hat u_1\in H_1,$ $\hat u_2\in H_2$ and a number $\hat c$
are determined from the condition
\begin{equation*}\label{m16}
\inf_{u\in H,\,c\in \mathbb R}\sigma(u,c)=\sigma(\hat u,\hat c),
\end{equation*}
where 
$$
\sigma(u,c):=\sup_{\tilde f \in
G_0, (\tilde \eta_1,\tilde \eta_2)\in G_1}\mathbb E|l(\mathbf
{\tilde j},\tilde \varphi)-\widehat {l(\mathbf {\tilde j},\tilde
\varphi)}|^2,
$$
and
$(\mathbf {\tilde j},\tilde
\varphi)$ is a solution of problem
\eqref{eqsystem1}, \eqref{eqsystem2} at $f(x)=\tilde f(x)$,
$\widehat{l(\tilde {\mathbf j}, \tilde \varphi)}:=(\tilde y_1,u_1)_{H_1}
+(\tilde y_2,u_2)_{H_2}+c, \tilde y_1=C_1\tilde {\mathbf j}+\tilde \eta_1, \quad
\tilde y_2=C_2\tilde \varphi+\tilde \eta_2.$

The quantity
\begin{equation}\label{mm16}
\sigma:=[\sigma(\hat u,\hat c)]^{1/2}
\end{equation} 
is called the error of the guaranteed estimation of $l(\mathbf {j},\varphi).$
\end{predll}

Thus, the guaranteed estimate is an estimate minimizing the maximal
mean-square estimation error calculated for the ``worst''
implementation of perturbations.

Further, without loss of generality, we may set
$\epsilon_k=1,$ $k=1,2,3,$ in \eqref{h11h} and \eqref{restr2}.

\section{Reduction of the estimation problem to the optimal control problem of a system governed by mixed variational equations}

Introduce a pair of functions
$
(\mathbf z_1(\cdot;u),z_2(\cdot;u))\in
H(\mbox{div};D)\times L^2(D)
$
as a solution of the problem:
\begin{multline}\label{113}
\int_{D}(((\mathbf A(x))^{-1})^T\mathbf z_1(x;u),\mathbf
q(x))_{\mathbb R^n}dx- \int_{D}z_2(x;u)\mbox{div\,}\mathbf
q(x)\,dx\\= \int_{D}(\mathbf l_1(x)-(C_1^tJ_{H_1}u_1)(x),\mathbf q(x))_{\mathbb
R^n}dx\quad \forall \mathbf q\in H(\mbox{div},D),
\end{multline}
\begin{multline}\label{115}
-\int_{D}v(x)\mbox{div\,}\mathbf
z_1(x;u)\,dx-\int_{D}c(x)z_2(\cdot;u)v(x)\,dx\\=\int_{D}(l_2(x)-(C_2^tJ_{H_2}u_2)(x))v(x)\,dx\quad
\forall v\in L^2(D),
\end{multline}
where $u\in H,$
$C_1^t:H_1'\to L^2(D)^n$ and $C_2^t:H_2'\to L^2(D)$ are the transpose operators of
$C_1$ and $C_2$, respectively, defined by
$\int_D(v(x),C_1^tw(x))_{\mathbb R^n}dx=<Cv,w>_{H_1\times H_1'}$
for all $v\in L^2(D)^n,$ $w\in H_1'$
and $\int_Dv(x)C_2^tw(x)\,dx=<Cv,w>_{H_2\times H_2'}$ for all $v\in
L^2(D),$ $w\in H_2'.$

From the theory of mixed variational problems it is known that
the pair $(\mathbf z_1(x;u),z_2(x;u))$ is uniquely determined
\footnote{In fact, note that problem \eqref{113}, \eqref{115} can be rewritten in the form
\begin{equation*} \label{eqn2a}
a^*(\mathbf z_1,\mathbf q)+b(\mathbf q,z_2)=l_1(\mathbf q)\quad \forall \mathbf q \in H(\rm div;D),
\end{equation*}
\begin{equation*} \label{eqn2'a}
b(\mathbf z_1;v)- c(z_2;v)=l_2(v)\quad\forall v\in L^2(D),
\end{equation*}
where $a^*(\mathbf z_1,\mathbf q)=a(\mathbf q,\mathbf z_1),$ the bilinear forms $a,$ $b,$ and $c,$ are defined by \eqref{a}, \eqref{b}, and \eqref{c}, respectively, $l_1(\mathbf q)=(\mathbf l_1-C_1^tJ_{H_1}u_1,\mathbf q)_{L^2(D)^n},$
$l_2(v)=(l_2-C_2^tJ_{H_2}u_2,v)_{L^2(D)}.$  Since $a^*(\mathbf q,\mathbf q)=a(\mathbf q,\mathbf q)$
then the bilinear form $a^*(\mathbf z_1,\mathbf q)$ is also coercive on $\rm Ker\,B$ and, hence, by Theorem 1.2 from \cite{Brezzi} problem \eqref{113}, \eqref{115} is uniquely solvable.
Moreover we have:
\begin{equation} \label{eqn2'ap}
\|\mathbf z_1\|_{H(\mbox{\rm \small div};D)}+\|z_2\|_{L^2(D)} \leq C(\|l_1\|_{H(\mbox{\small div};D)'}+\|l_2\|_{L^2(D)})
\leq C((\mathbf l_1-C_1^tJ_{H_1}u_1)_{L^2(D)^n}+(l_2-C_2^tJ_{H_2}u_2)_{L^2(D)}),
\end{equation}
where $C=$const.
}

\begin{predl}
The problem of guaranteed estimation of the functional $l(\mathbf j, \varphi)$
(i.e. the determination of $\hat u=(\hat u_1,\hat u_2)$ and $\hat c$) is equivalent to
the problem of optimal control of the system described by mixed variational problem (\ref{113}), (\ref{115}) with a cost function
\begin{equation}\label{m20}
I(u)=(Q^{-1}z_2(\cdot;u),
z_2(\cdot;u))_{L^2(D)}
+ (\tilde
Q_1^{-1}u_1,u_1)_{H_1}+(\tilde Q_2^{-1}u_2,u_2)_{H_2}\!\to
\inf_{u\in H}.
\end{equation}
\end{predl}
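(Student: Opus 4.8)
The plan is to derive a closed-form expression for $\sigma(u,c)$ and then show that, after the trivial minimization in the scalar $c$, minimizing it over $(u,c)$ is the same problem as minimizing the functional $I(u)$ in \eqref{m20}. I would begin by substituting $\tilde y_1=C_1\tilde{\mathbf j}+\tilde\eta_1$, $\tilde y_2=C_2\tilde\varphi+\tilde\eta_2$ into the estimate \eqref{clas} and rewriting the deterministic parts $(C_1\tilde{\mathbf j},u_1)_{H_1}$ and $(C_2\tilde\varphi,u_2)_{H_2}$ through the Riesz operators and the defining relations of $C_1^t,C_2^t$, so that the error takes the form
$$l(\tilde{\mathbf j},\tilde\varphi)-\widehat{l(\tilde{\mathbf j},\tilde\varphi)}=\int_D(\mathbf l_1-C_1^tJ_{H_1}u_1,\tilde{\mathbf j})_{\mathbb R^n}\,dx+\int_D(l_2-C_2^tJ_{H_2}u_2)\tilde\varphi\,dx-(\tilde\eta_1,u_1)_{H_1}-(\tilde\eta_2,u_2)_{H_2}-c.$$
The crucial observation is that the first two integrals are exactly the right-hand sides of \eqref{113}, \eqref{115}. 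Testing \eqref{113} with $\mathbf q=\tilde{\mathbf j}$ and \eqref{115} with $v=\tilde\varphi$, and simultaneously testing the primal equations \eqref{generalized}, \eqref{generalized1} (with $f=\tilde f$) with $\mathbf q=\mathbf z_1(\cdot;u)$ and $v=z_2(\cdot;u)$, I would add the four resulting identities. Since $(((\mathbf A)^{-1})^T\mathbf z_1,\tilde{\mathbf j})_{\mathbb R^n}=((\mathbf A)^{-1}\tilde{\mathbf j},\mathbf z_1)_{\mathbb R^n}$, the two $a$-form terms cancel, as do the paired divergence and $c$-terms, and everything collapses to
$$\int_D(\mathbf l_1-C_1^tJ_{H_1}u_1,\tilde{\mathbf j})_{\mathbb R^n}\,dx+\int_D(l_2-C_2^tJ_{H_2}u_2)\tilde\varphi\,dx=-\int_D\tilde f\,z_2(\cdot;u)\,dx.$$

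Thus the error equals $-\int_D\tilde f z_2\,dx-(\tilde\eta_1,u_1)_{H_1}-(\tilde\eta_2,u_2)_{H_2}-c$, a deterministic part plus two zero-mean random terms. Taking the expectation of its square, the cross terms between the deterministic and random parts vanish because $\mathbb E\tilde\eta_i=0$, and the mixed noise term $\mathbb E(\tilde\eta_1,u_1)_{H_1}(\tilde\eta_2,u_2)_{H_2}$ vanishes by the uncorrelatedness \eqref{nekor}, leaving
$$\mathbb E|l-\widehat l|^2=\Bigl(\int_D\tilde f z_2\,dx+c\Bigr)^2+\mathbb E(\tilde\eta_1,u_1)_{H_1}^2+\mathbb E(\tilde\eta_2,u_2)_{H_2}^2.$$
Since the constraints defining $G_0$ and $G_1$ are independent, the supremum splits. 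For the data term I would set $\tilde f=f_0+g$ with $(Qg,g)_{L^2}\le1$ and apply Cauchy--Schwarz in the $Q$-inner product, obtaining $\sup_{\tilde f\in G_0}(\int_D\tilde f z_2\,dx+c)^2=\bigl(|\int_D f_0 z_2\,dx+c|+\sqrt{(Q^{-1}z_2,z_2)_{L^2}}\bigr)^2$. For the noise term I would maximize each $\mathbb E(\tilde\eta_i,u_i)^2$ under $\mathbb E(\tilde Q_i\tilde\eta_i,\tilde\eta_i)\le1$; Cauchy--Schwarz with $\tilde Q_i^{1/2}$ bounds it by $(\tilde Q_i^{-1}u_i,u_i)_{H_i}$, and the bound is attained by a rank-one random variable aligned with $\tilde Q_i^{-1}u_i$.

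Collecting these, $\sigma(u,c)=\bigl(|\int_D f_0 z_2\,dx+c|+\sqrt{(Q^{-1}z_2,z_2)_{L^2}}\bigr)^2+(\tilde Q_1^{-1}u_1,u_1)_{H_1}+(\tilde Q_2^{-1}u_2,u_2)_{H_2}$. Only the first square depends on $c$, and as $c$ runs over $\mathbb R$ so does $\int_D f_0 z_2\,dx+c$; minimizing gives value $0$ at $\hat c=-\int_D f_0 z_2(\cdot;\hat u)\,dx$, and the minimized expression is precisely $I(u)$ of \eqref{m20}. Hence $\inf_{u\in H,\,c\in\mathbb R}\sigma(u,c)=\inf_{u\in H}I(u)$ with coinciding minimizers $\hat u$, which is the claimed equivalence.

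The main obstacle is the first step: the algebraic duality identity that transfers the functional of the unknown state $(\tilde{\mathbf j},\tilde\varphi)$ onto the unknown source $\tilde f$ through the adjoint component $z_2$. Getting the signs and the placement of the transposed matrix $((\mathbf A)^{-1})^T$ correct is exactly what makes the four test-function identities cancel down to $-\int_D\tilde f z_2\,dx$; once this collapse is secured, the remaining steps are standard worst-case (convex) computations and an elementary scalar minimization.
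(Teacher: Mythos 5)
Your proposal is correct and follows essentially the same route as the paper's proof: the same error representation, the same duality collapse obtained by testing \eqref{113}, \eqref{115} with $(\tilde{\mathbf j},\tilde\varphi)$ and the primal equations with $(\mathbf z_1(\cdot;u),z_2(\cdot;u))$, the same variance decomposition using zero means and the uncorrelatedness \eqref{nekor}, and the same worst-case computations over $G_0$ and $G_1$ with the same extremal elements. The only (minor, welcome) difference is that you evaluate $\sup_{\tilde f\in G_0}$ in closed form for each fixed $c$ before minimizing in $c$, which makes explicit the inf--sup step that the paper performs tacitly when it sets $c=-(f_0,z_2(\cdot;u))_{L^2(D)}$.
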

\begin{proof}
From relations \eqref{observ}--\eqref{clas} at $\mathbf j=\tilde{\mathbf j},$ $\varphi=\tilde \varphi,$ $\eta_1=\tilde \eta_1,$ $\eta_2=\tilde \eta_2,$ we have
$$
l(\tilde{\mathbf j},\tilde
\varphi)-\widehat{l(\tilde{\mathbf j},\tilde \varphi)}=
(\mathbf l_1,\tilde{\mathbf j})_{L^2(D)^n}+(l_2,\tilde{\varphi})_{L^2(D)}
$$
$$
-(\tilde y_1,u_1)_{H_1}
-(\tilde y_2,u_2)_{H_2}-c
=(\mathbf l_1,\tilde{\mathbf j})_{L^2(D)^n}+(l_2,\tilde{\varphi})_{L^2(D)}
$$
$$
-(u_1,C_1\tilde{\mathbf j}+\tilde\eta_1)_{H_1}-(u_2,C_2\tilde
\varphi+\tilde\eta_2)_{H_2}-c
=(\mathbf l_1,\tilde{\mathbf j})_{L^2(D)^n}+(l_2,\tilde{\varphi})_{L^2(D)}
$$
$$
-<J_{H_1}u_1,C_1\tilde{\mathbf j}>_{H_1'\times H_1}-<
J_{H_2}u_2,C_2\tilde \varphi>_{H_2'\times H_2}
$$
$$
 -(u_1,\tilde\eta_1)_{H_1}-(u_2,\tilde\eta_2)_{H_2}-c
=(\mathbf l_1,\tilde{\mathbf j})_{L^2(D)^n}+(l_2,\tilde{\varphi})_{L^2(D)}
$$
$$
-
(C_1^tJ_{H_1}u_1,\tilde{\mathbf j})_{L^2(D)^n}
-(C_2^t
J_{H_2}u_2,\tilde \varphi)_{L^2(D)}
-(u_1,\tilde\eta_1)_{H_1}-(u_2,\tilde\eta_2)_{H_2}-c
$$
$$
=(\mathbf l_1-C_1^tJ_{H_1}u_1,\tilde{\mathbf j})_{L^2(D)^n}+(l_2-C_2^t J_{H_2}u_2,\tilde{\varphi})_{L^2(D)}
$$
\begin{equation}\label{gh1}
-(u_1,\tilde\eta_1)_{H_1}-(u_2,\tilde\eta_2)_{H_2}-c.
\end{equation}\label{fffg}
Futher, taking into account system of variational equations
\begin{equation} \label{16at1v}
\!\!\int_D(\mathbf
(\mathbf A(x))^{-1}\mathbf { \tilde j}(x),\mathbf q(x))_{\mathbb R^n}dx
-\int_D\tilde\varphi(x)\,\mbox{div}\,\mathbf q(x)\,dx=0
\,\,\,\forall \mathbf q\in H(\mbox{div};D),
\end{equation}
\begin{equation} \label{14aat1v}
\int_Dv(x)\,\mbox{div}\,\mathbf {\tilde j}(x)\,dx+\int_{D}c(x)\tilde\varphi(x)v(x)\,dx=\int_D\tilde
f(x)v(x)\,dx \quad \forall v\in L^2(D),
\end{equation}
which follows from \eqref{generalized}--\eqref{generalized1} if we set there $\mathbf f=\tilde {\mathbf f},$
 and \eqref{113}, \eqref{115},
transform the third and the fourth summands in (\ref{gh1}). By setting
$\mathbf q=\mathbf{\tilde j}$ in \eqref{113}
and $v=\tilde \varphi$ in \eqref{115}, we have
\begin{multline}\label{gh2}
\int_D((\mathbf A(x))^{-1}\mathbf { \tilde j}(x),\mathbf z_1(x;u))_{\mathbb R^n}dx
-\int_Dz_2(x;u)\,\mbox{div}\, \mathbf { \tilde j}(x)\,dx
\\=\int_D(\mathbf l_1(x)-(C_1^tJ_{H_1}u_1)(x),\mathbf{\tilde j}(x))_{\mathbb R^n}dx,
\end{multline}
\begin{multline} \label{gh3}
-\int_D\tilde \varphi(x)\mbox{div}\,\mathbf z_1(x;u)\,dx-\int_{D}c(x)z_2(x;u)\tilde \varphi(x)\,dx
\\=\int_D(l_2(x)-(C_2^tJ_{H_2}u_2)(x))\tilde\varphi(x)\,dx.
\end{multline}
On the other hand, putting $\mathbf q=\mathbf z_1(\cdot;u)$ in \eqref{16at1v} and
$v= z_2(\cdot;u)$ in \eqref{14aat1v}, we find
\begin{equation} \label{gh4}
\int_D((\mathbf A(x))^{-1}\mathbf { \tilde j}(x),\mathbf z_1(x;u))_{\mathbb R^n}dx
-\int_D\tilde\varphi(x)\,\mbox{div}\,\mathbf z_1(x;u)\,dx=0,
\end{equation}
\begin{equation} \label{gh5}
\int_Dz_2(x;u)\,\mbox{div}\,\mathbf {\tilde j}(x)\,dx+\int_{D}c(x)\tilde\varphi(x)z_2(x;u)\,dx=\int_D\tilde
f(x)z_2(x;u)\,dx.
\end{equation}
From \eqref{gh2}--\eqref{gh5}, we get
$$
(\mathbf l_1-C_1^tJ_{H_1}u_1,\mathbf{ \tilde j})_{L^2(D)^n}
+(l_2-C_2^tJ_{H_2}u_2,\tilde\varphi)_{L^2(D)}
$$
$$
=\int_D((\mathbf A(x))^{-1}\mathbf { \tilde j}(x),\mathbf z_1(x;u))_{\mathbb R^n}dx
-\int_Dz_2(x;u)\,\mbox{div}\, \mathbf { \tilde j}(x)\,dx
$$
$$
-\int_D\tilde \varphi(x)\,\mbox{div}\,\mathbf z_1(x;u)\,dx-\int_{D}c(x)z_2(x;u)\tilde \varphi(x)\,dx
=\int_D((\mathbf A(x))^{-1}\mathbf { \tilde j}(x),\mathbf z_1(x;u)_{\mathbb R^n}dx
$$
$$
-\int_D\tilde \varphi(x)\,\mbox{div}\,\mathbf z_1(x;u)\,dx
-\int_Dz_2(x;u)\,\mbox{div}\, \mathbf { \tilde j}(x)\,dx-\int_{D}c(x)z_2(x;u)\tilde \varphi(x)\,dx
=
$$
\begin{equation}\label{yxxy}
=0-(\tilde
f,z_2(\cdot;u)_{L^2(D)}=-(\tilde
f,z_2(\cdot;u)_{L^2(D)}.
\end{equation}
Equalities \eqref{yxxy} and \eqref{gh1} imply
$$
l({\mathbf{\tilde j}},\tilde
\varphi)-\widehat{l({\mathbf{\tilde j}},\tilde \varphi)}
=-(\tilde
f,z_2(\cdot;u_1,u_2)_{L^2(D)}
-(u_1,\tilde\eta_1)_{H_1}-(u_2,\tilde\eta_2)_{H_2}-c
$$
$$
=-(\tilde f-f_0,z_2(\cdot;u_1,u_2))_{L^2(D)}-(f_0,z_2(\cdot;u_1,u_2))_{L^2(D)}
$$
\begin{equation}\label{lhjka}
-(u_1,\tilde\eta_1)_{H_1}-(u_2,\tilde\eta_2)_{H_2}-c=:\xi,
\end{equation}
where by $\xi$ we denote the random variable defined by the right-hand side of the latter equality. It is obvious that
\begin{equation*}
\mathbb E\xi=-(\tilde f-f_0,z_2(\cdot;u_1,u_2))_{L^2(D)}
\\-(f_0,z_2(\cdot;u_1,u_2))_{L^2(D)}-c,
\end{equation*}
$$
\xi-\mathbb
E\xi=-(u_1,\tilde\eta_1)_{H_1}-(u_2,\tilde\eta_2)_{H_2}.
$$
Taking into consideration the relationship
\begin{equation}\label{dispe}
\mathbf {D}\xi=\mathbb E(\xi-\mathbb E\xi)^2=\mathbb
E\xi^2-(\mathbb E\xi)^2
\end{equation}
that couples dispersion $\mathbf D\xi$ of the random variable $\xi$ and its
expectation  $\mathbb E\xi,$ we obtain from \eqref{lhjka}
$$
\mathbb E\left|l(\mathbf{\tilde  j},\tilde \varphi)-\widehat
{(l(\mathbf{\tilde  j},\tilde \varphi)}\right|^2
 =\left|(\tilde
f_2-f_0,z_2(\cdot;u))_{L^2(D)}\right.
$$
$$
\!\left.+(f_0,z_2(\cdot;u))_{L^2(D)}+c\right|^2\!
+\mathbb E[(u_1,\tilde\eta_1)_{H_1}+(u_2,\tilde\eta_2)_{H_2}]^2,
$$
whence we get \label{fffd}
$$
 \inf_{c \in
\mathbb R}\sup_{\tilde f\in G_0, (\tilde
\eta_1,\tilde \eta)\in G_1} \mathbb E|l(
\mathbf {\tilde j},\tilde\varphi)-\widehat {l(
\mathbf{\tilde  j},\tilde\varphi)}|^2=
$$
\begin{equation*}
=\inf_{c \in \mathbb R}\sup_{\tilde f\in
G_0}\left[(\tilde f-f_0,z_2(\cdot;u))_{L^2(D)}
\right.\\ \left.+(f_0,z_2(\cdot;u))_{L^2(D)}+c\right]^2
\end{equation*}
$$
+ \sup_{ (\tilde \eta_1,\tilde \eta_2)\in G_1}\mathbb
E[(\tilde\eta_1,u_1)_{H_1}+(\tilde\eta_2,u_2)_{H_2}]^2
$$
\begin{equation}\label{exh}
=\sup_{\tilde f\in G_0}\left[
 (\tilde f_2-f_2^{(0)},z_2(\cdot;u))_{L^2(D)}\right]^2
\\+\sup_{ (\tilde \eta_1,\tilde \eta_2)\in G_1}\mathbb
E[(\tilde\eta_1,u_1)_{H_1}+(\tilde\eta_2,u_2)_{H_2}]^2,
\end{equation}
with
$$
c=-(f_0,z_2(\cdot;u))_{L^2(D)}.
$$
In order to calculate the first term on the right-hand side of (\ref{exh})
make use of the Cauchy$-$Bunyakovsky inequality (see \cite{BIBLhat}, p. 186) and
(\ref{h11h}). We have
$$
|(\tilde f-f_0,z_2(\cdot;u))_{L^2(D)}|^2\leq
$$
$$
\leq
(Q^{-1}z_2(\cdot;u),z_2(\cdot;u))_{L^2(D)}
(Q(\tilde f-f_0),\tilde f-f_0)_{L^2(D)}
\leq
(Q^{-1}z_2(\cdot;u),z_2(\cdot;u))_{L^2(D)}.
$$
The direct substitution shows that last inequality
is transformed to an equality on the element
$$
\tilde f=f_0+\frac {Q^{-1}z_2(\cdot;u)}{(Q^{-1}z_2(\cdot;u),z(\cdot;u))_{L^2(D)}^{1/2}}.
$$
Hence,
\begin{equation}\label{ner88}
\sup_{\tilde f\in G_0}\left[
 (\tilde f_2-f_2^{(0)},z_2(\cdot;u))_{L^2(D)}\right]^2
 \\=(Q^{-1}z_2(\cdot;u),z_2(\cdot;u))_{L^2(D)}.
\end{equation}
In order to calculate the second term on the right-hand side of  (\ref{exh}),
note that the Cauchy$-$Bunyakovsky inequality, \eqref{restr2}, \eqref{rand4}, and \eqref{nekor} yields
$$
 \sup_{ (\tilde \eta_1,\tilde \eta_2)\in G_1}\mathbb
E[(\tilde\eta_1,u_1)_{H_1}+(\tilde\eta_2,u_2)_{H_2}]^2
$$
$$
\leq \sup_{ (\tilde \eta_1,\tilde \eta_2)\in G_1}[(\tilde Q_1^{-1}u_1,u_1)_{H_1}\mathbb E
(\tilde Q_1\tilde\eta_1,\tilde\eta_1)_{H_1}+(\tilde Q_2^{-1}u_2,u_2)_{H_2}\mathbb E(\tilde Q_2\tilde\eta_2,\tilde\eta_2)_{H_2}]
$$
\begin{equation}\label{ner56}
\leq (\tilde Q_1^{-1}u_1,u_1)_{H_1}+(\tilde Q_2^{-1}u_2,u_2)_{H_2}
\end{equation}
It is easy to see that \eqref{ner56} takes the form
at
$$
\tilde\eta_1=\nu_1\tilde Q_1^{-1}u_1/(\tilde Q_1^{-1}u_1,u_1)^{1/2},\quad \tilde\eta_2=\nu_2\tilde Q_2^{-1}u_1/(\tilde Q_2^{-1}u_1,u_1)^{1/2},
$$
where $\nu_1$ and $\nu_2$ are uncorrelated random variables with $\mathbb E\nu_1=\mathbb E\nu_2=0,$ $\mathbb E\nu_1^2=\mathbb E\nu_2^2=1.
$
Therefore,
\begin{equation}\label{ner560}
\sup_{ (\tilde \eta_1,\tilde \eta_2)\in G_1}\mathbb
E[(\tilde\eta_1,u_1)_{H_1}+(\tilde\eta_2,u_2)_{H_2}]^2=(\tilde Q_1^{-1}u_1,u_1)_{H_1}+(\tilde Q_2^{-1}u_2,u_2)_{H_2}.
\end{equation}
From (\ref{ner560}), (\ref{ner88}), and (\ref{exh}), we find
$$
 \inf_{c \in
\mathbb R}\sup_{\tilde f\in G_0, (\tilde
\eta_1,\tilde \eta)\in G_1} \mathbb E|l(
\mathbf {\tilde j},\tilde\varphi)-\widehat {l(
\mathbf{\tilde  j},\tilde\varphi)}|^2
=I(u),
$$
at $c=-(
z_2(\cdot;u),f_0)_{L^2(D)},$ where $I(u)$
is determined by (\ref{m20}). This proves the required assertion.
\end{proof}


\section{Representation for guaranteed estimates and errors of estimation via solutions of mixed variational equations}


Solving optimal control problem
(\ref{113})--(\ref{m20}), we come to the following result.
\begin{pred}
There exists a unique guaranteed eatimate of $l(\mathbf {j},\varphi)$
which has the form
\begin{equation}\label{exactest}
\widehat{\widehat {l(\mathbf {j},\varphi)}}=(y_1,\hat u_1)_{H_1}+(y_2,\hat
u_2)_{H_2}+\hat c,
\end{equation}
where
\begin{equation}\label{hat}
\hat c=-\int_{D}\hat
z_2(x)f_0(x)\,dx,\quad\hat u_1=\tilde Q_1C_1\mathbf
{p}_1,\quad\hat u_2= \tilde Q_2C_2 p_2,
\end{equation}
and the functions $\mathbf {p}_1\in H(\mbox{\rm
div},D)$ and $\hat z_2 ,p_2\in L^2(D)$ are determined as a solution of the following uniquely solvable
problem:
\begin{multline} \label{r13at1}
\int_{D}(((\mathbf A(x))^{-1})^T\,\hat{\mathbf z}_1(x),\mathbf
q_1(x))_{\mathbb R^n}dx-\int_{D}\hat z_2(x)\mbox{\rm
div\,}\mathbf q_1(x)\,dx
\\=\int_{D}(\mathbf l_1(x)-C_1^tJ_{H_1}\tilde Q_1C_1\mathbf
{p}_1(x),\mathbf q_1(x))_{\mathbb R^n}\,dx \quad\forall\mathbf
q_1\in H(\mbox{\rm div},D),
\end{multline}
\begin{multline} \label{r14at1}
\!\!-\int_{D}v_1(x)\mbox{\rm div\,}\hat{\mathbf z}_1(x)\,dx-\int_{D}c(x)z_2(x)v_1(x)\,dx\\
\!=\!\int_{D}(l_2(x)-C_2^tJ_{H_2}\tilde Q_2C_2
p_2(x))v_1(x)\,dx\,\,\,\forall v_1\in L^2(D),
\end{multline}
\begin{equation}\label{r16at1'}
\int_{D}((\mathbf A(x))^{-1}\mathbf {p}_1(x),\mathbf q_2(x))_{\mathbb
R^n}dx\!-\!\int_{D}p_2(x)\mbox{\rm div\,}\mathbf q_2(x)\,dx=0
\,\,\,
\forall \mathbf q_2\in H(\mbox{\rm div},D),
\end{equation}
\begin{multline} \label{r15at1}
-\int_{D}v_2(x)\mbox{\rm div\,}\hat{\mathbf p}_1(x)\,dx-\int_Dc(x)p_2(x) v_2(x)dx
\\=\int_{D}v_2(x)Q^{-1}\hat z_2(x)\,dx \quad \forall v_2\in
L^2(D),
\end{multline}
where $\hat{\mathbf z}_1\in H(\mbox{\rm
div},D).$
The error of estimation $\sigma$ is given by an expression
\begin{equation}\label{rll}
\sigma=l(\mathbf {p}_1,p_2)^{1/2}.
\end{equation}
\end{pred}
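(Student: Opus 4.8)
The plan is to solve the optimal control problem furnished by the preceding lemma: minimize the quadratic cost $I(u)$ of \eqref{m20} over $u\in H$, where $(\mathbf z_1(\cdot;u),z_2(\cdot;u))$ solves the mixed problem \eqref{113}, \eqref{115}. First I would record that $u\mapsto(\mathbf z_1(\cdot;u),z_2(\cdot;u))$ is affine and continuous (its well-posedness and the a priori bound are the footnoted consequence of Theorem~1.2 of \cite{Brezzi}), so that $z_2(\cdot;u)=z_2^0+Lu$ with $L$ linear and bounded. Hence $I$ is a continuous quadratic functional whose quadratic part is $(Q^{-1}Lu,Lu)_{L^2(D)}+(\tilde Q_1^{-1}u_1,u_1)_{H_1}+(\tilde Q_2^{-1}u_2,u_2)_{H_2}$; since $Q^{-1}\ge 0$ and, by \eqref{posdefme}, the last two terms dominate $\alpha\|u\|_H^2$, the functional $I$ is strictly convex and coercive on the Hilbert space $H$. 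The standard theorem on minimization of strictly convex coercive quadratic functionals then yields a unique minimizer $\hat u=(\hat u_1,\hat u_2)$, hence a unique guaranteed estimate \eqref{exactest}, the constant being $\hat c=-(f_0,\hat z_2)_{L^2(D)}$ exactly as in the previous proof.

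Next I would extract the optimality system from $I'(\hat u)=0$. Differentiating $I$ in a direction $h=(h_1,h_2)$ and writing $(\boldsymbol\zeta_1,\zeta_2)$ for the solution of \eqref{113}, \eqref{115} driven only by $-C_1^tJ_{H_1}h_1$ and $-C_2^tJ_{H_2}h_2$ (the linearization $Lh$), the Euler equation reads $(Q^{-1}\hat z_2,\zeta_2)_{L^2(D)}+(\tilde Q_1^{-1}\hat u_1,h_1)_{H_1}+(\tilde Q_2^{-1}\hat u_2,h_2)_{H_2}=0$ for all $h$. To turn the first term into an expression linear in $h$ I introduce the adjoint pair $(\mathbf p_1,p_2)$ as the solution of the mixed problem \eqref{r16at1'}, \eqref{r15at1} with source $Q^{-1}\hat z_2$. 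Testing \eqref{r15at1} with $\zeta_2$, \eqref{r16at1'} with $\boldsymbol\zeta_1$, and the two linearized state equations with $p_2$ and $\mathbf p_1$ respectively, and using the $((\mathbf A)^{-1})^T$-symmetry to cancel the $\mathbf A^{-1}$-pairings, the four cross terms telescope to $(Q^{-1}\hat z_2,\zeta_2)_{L^2(D)}=-(C_1^tJ_{H_1}h_1,\mathbf p_1)_{L^2(D)^n}-(C_2^tJ_{H_2}h_2,p_2)_{L^2(D)}$. The defining relations of $C_i^t$ and $J_{H_i}$ rewrite the right-hand side as $-(C_1\mathbf p_1,h_1)_{H_1}-(C_2p_2,h_2)_{H_2}$, so the Euler equation splits into $(\tilde Q_1^{-1}\hat u_1-C_1\mathbf p_1,h_1)_{H_1}=0$ and $(\tilde Q_2^{-1}\hat u_2-C_2p_2,h_2)_{H_2}=0$ for all $h_1,h_2$, yielding $\hat u_1=\tilde Q_1C_1\mathbf p_1$ and $\hat u_2=\tilde Q_2C_2p_2$ as in \eqref{hat}. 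Substituting these back into \eqref{113}, \eqref{115} produces \eqref{r13at1}, \eqref{r14at1}, so that $(\hat{\mathbf z}_1,\hat z_2,\mathbf p_1,p_2)$ solves the coupled system \eqref{r13at1}--\eqref{r15at1}; its unique solvability is inherited from the uniqueness of $\hat u$ together with the unique solvability of the direct and adjoint mixed problems.

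Finally, for the error I would establish the identity $I(\hat u)=l(\mathbf p_1,p_2)$ by the same pairing technique. Testing \eqref{r13at1} with $\mathbf p_1$ and \eqref{r14at1} with $p_2$, then \eqref{r16at1'} with $\hat{\mathbf z}_1$ and \eqref{r15at1} with $\hat z_2$, the $\mathbf A^{-1}$- and divergence-pairings between the direct and adjoint states cancel, leaving $(Q^{-1}\hat z_2,\hat z_2)_{L^2(D)}=l(\mathbf p_1,p_2)-(C_1^tJ_{H_1}\tilde Q_1C_1\mathbf p_1,\mathbf p_1)_{L^2(D)^n}-(C_2^tJ_{H_2}\tilde Q_2C_2p_2,p_2)_{L^2(D)}$. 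The two subtracted terms equal $(\tilde Q_1^{-1}\hat u_1,\hat u_1)_{H_1}$ and $(\tilde Q_2^{-1}\hat u_2,\hat u_2)_{H_2}$ by the duality relations of $C_i^t$ and $J_{H_i}$ again, whence $l(\mathbf p_1,p_2)=I(\hat u)=\sigma^2$ and $\sigma=l(\mathbf p_1,p_2)^{1/2}$, which is \eqref{rll}.

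The step I expect to be the main obstacle is the bookkeeping in the adjoint pairing: because the underlying problem is a saddle-point (mixed) one, the adjoint is itself a mixed system, and one must choose the test functions so that precisely the $\mathbf A^{-1}$- and divergence-pairings between the direct and adjoint states cancel while the intended telescoping survives, keeping careful track of $((\mathbf A)^{-1})^T$ versus $(\mathbf A)^{-1}$ and of the placement of $C_i^t$ and $J_{H_i}$. Once this cancellation is organized, both the formulas \eqref{hat} and the error identity \eqref{rll} emerge from essentially the same computation.
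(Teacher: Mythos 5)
Your proposal is correct and follows essentially the same route as the paper: the paper likewise establishes existence and uniqueness of the minimizer $\hat u$ by exhibiting $I(u)$ as a coercive quadratic functional (invoking Theorem 1.1 of Lions rather than your generic strict-convexity argument), derives the same Euler equation $\frac{d}{d\tau}I(\hat u+\tau w)|_{\tau=0}=0$ with the linearized state $\tilde z_2(\cdot;w)$, introduces the adjoint pair $(\mathbf p_1,p_2)$ via \eqref{r16at1'}, \eqref{r15at1} with source $Q^{-1}\hat z_2$, and performs exactly your telescoping pairing (with the $((\mathbf A)^{-1})^T$ transposition) to obtain $\hat u_1=\tilde Q_1C_1\mathbf p_1$, $\hat u_2=\tilde Q_2C_2p_2$, the coupled system \eqref{r13at1}--\eqref{r15at1} with solvability inherited from uniqueness of $\hat u$, and the error identity $\sigma^2=I(\hat u)=l(\mathbf p_1,p_2)$. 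The bookkeeping you flag as the main obstacle is organized in the paper precisely as you describe, so no gap remains.
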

\begin{proof}
Let us prove that the solution to the optimal control problem
(\ref{113})--(\ref{m20}) can be reduced to the solution
of system (\ref{r13at1})-(\ref{r15at1}).

Note first that functional
$I(u),$ where $u\in H$ can be represented in the form
\begin{equation*}\label{qfunk}
I(u)=\tilde I(u)+L(u) +\int_{D}Q^{-1}\tilde z_2^{(0)}(x)\tilde z_2^{(0)}(x) \,dx,
\end{equation*}
where
\begin{equation*}\label{qfunk1}
 \tilde I(u)=\int_{D}Q^{-1}\tilde z_2(x;u)\tilde z_2(x;u) \,dx
+ (\tilde
Q_1^{-1}u_1,u_1)_{H_1}+(\tilde Q_2^{-1}u_2,u_2)_{H_2},
\end{equation*}
\begin{equation*}\label{lfunk}
L(u)= 2\int_{D} Q^{-1}\tilde z_2(x;u)\tilde z_2^{(0)}(x) \, dx,
\end{equation*}
$\tilde z_2(x;u)$ is the second component of the pair
$(\tilde {\mathbf  z}_1(x;u), \tilde z_2(x;u))$ which the unique solution to problem
(\ref{113}), (\ref{115}) at $\mathbf
 l_0^{(1)}(x)=0,$ $l_0^{(2)}(x)=0,$ and
 $\tilde z_2^{(0)}(x)$ is the second component of the pair $(\tilde {\mathbf z}_1^{(0)}(x), \tilde z_2^{(0)}(x))$
which the unique solution to the same problem at $u=0.$

Show that $\tilde I(u)$ is a quadratic form corresponding
to a symmetric continuous bilinear form \label{24p}
\begin{equation}\label{77}
\pi(u,v):= \int_{D}Q^{-1}\tilde z_2(x;u)\tilde z_2(x;v) \,dx+ (\tilde
Q_1^{-1}u_1,v_1)_{H_1}+(\tilde Q_2^{-1}u_2,v_2)_{H_2}
\end{equation}
on $H\times H$
and $
L(u)$ is a linear continuous functional defined on  $H$.

The continuity of form $
\pi(u,v)$ on $H\times H$ means that for all $u,v\in H$ the inequality
\begin{equation}\label{contbil}
|\pi(u,v)|\leq C\|u\|_H \|v\|_H
\end{equation}
must be valid,
where $C=\mbox{const}.$

To prove \eqref{contbil}, we use the estimate
\begin{equation}\label{78}
\int_{D}\tilde z_2^2(x;u)\,dx\leq
c_1\Bigl(\left\|C_1^tJ_{H_1}u_1\right\|^2_{L^2(D)^n}+\left\|C_2^tJ_{H_2}u_2\right\|^2_{L^2(D)}\Bigr),\quad c_1=\mbox{const},
\end{equation}
which follows from the inequality \eqref{eqn2'ap} at $\mathbf l_1=0$ and $l_2=0.$
For the first term in the right-hand side of \eqref{77}, due to the Cauchy$-$Bunyakovsky inequality and \eqref{78}
we have
$$
\left|\int_{D}Q^{-1}\tilde z_2(x;u)\tilde z_2(x;v) \,dx
\right|\leq
$$
\begin{equation}\label{uu'}
\leq c_2\left(\int_{D}\tilde z_2^2(x;u)\,dx\right)^{1/2}
\left(\int_{D}\tilde z_2^2(x;v)\,dx\right)^{1/2},
\end{equation}
$$
\leq c_2c_3\Bigl(\left\|C_1^tJ_{H_1}u_1\right\|^2_{L^2(D)^n}+\left\|C_2^tJ_{H_2}u_2\right\|^2_{L^2(D)}\Bigr)^{1/2}
$$
$$
\times c_3\Bigl(\left\|C_1^tJ_{H_1}v_1\right\|^2_{L^2(D)^n}+\left\|C_2^tJ_{H_2}v_2\right\|^2_{L^2(D)}\Bigr)^{1/2}
$$
\begin{equation}\label{hhg}
\leq c_4\Bigl(\left\|u_1\right\|^2_{H_1}+\left\|u_2\right\|^2_{H_2}\Bigr)^{1/2}
\Bigl(\left\|v_1\right\|^2_{H_1}+\left\|v_2\right\|^2_{H_2}\Bigr)^{1/2}=c_4\|u\|_H\|v\|_H,
\end{equation}
where $c_2,c_3,c_4=\mbox{const}.$

Analogously,
$$
(\tilde
Q_1^{-1}u_1,v_1)_{H_1}+(\tilde Q_2^{-1}u_2,v_2)_{H_2}\leq
c_5\|u\|_H\|v\|_H,\quad c_5=\mbox{const}.
$$
From this estimate and \eqref{hhg}
it follows the validity of the inequality \eqref{contbil}.

The continuity of linear functional
$L(u)$ on $H$ can be proved similary. \label{25p}

It is obvious that
$$
\tilde I(u)=\pi(u,u)\geq
(Q_1^{-1}u_1,u_1)_{H_1}+(\tilde Q_2^{-1}u_2,u_2)_{H_2}
\geq \alpha\|u\|_H^2 \quad \forall
u\in H,
$$
where $\alpha$ is a constant from \eqref{posdefme}.
In line with Theorem 1.1
proved in
\cite{BIBLlio}, p. 11,  the latter statements imply the existence of the unique element
 $\hat{u}:=(\hat u_1,\hat u_2)\in H$ such that
$$I(\hat{u})=\inf_{u
\in H}I(u).$$
Therefore, for any fixed
 $
w\in H$ and $\tau \in \mathbb R$ the function $s(\tau):=I(\hat{
u}+\tau w)$ reaches its minimum at a unique point $\tau
=0,$ so that,
\begin{equation} \label{z43}
\frac {d}{d\tau} I(\hat {u}+\tau w)\left.
\right|_{\tau=0}=0.
\end{equation}
Since
$$z_2(x; \hat {u}+\tau w)=z_2(x;\hat u)+\tau \tilde z_2(x;w),$$
relation (\ref{z43})
 yields
$$
\frac{1}{2}\frac{d}{dt}I(\hat u+\tau
w)\Bigl.\Bigr|_{\tau=0}
$$
\begin{equation}\label{z43a}
=(Q^{-1}z_2(\cdot;\hat u),\tilde z_2(\cdot;w ))_{L^2(D)}
+(\tilde Q_1^{-1}\hat u_1,w_1)_{H_1}
+(\tilde Q_2^{-1}\hat u_2,w_2)_{H_2}=0.
\end{equation}
Introduce a pair of functions $(\mathbf {p}_1,p_2)\in H(\mbox{\rm
div},D)\times L^2(D)$ as the unique solution of the problem
\begin{multline}\label{r16at1r}
\int_{D}((\mathbf A(x))^{-1}\mathbf {p}_1(x),\mathbf q_2(x))_{\mathbb
R^n}dx
\\- \int_{D}p_2(x)\mbox{\rm div\,}\mathbf q_2(x)\,dx=0
 \quad
\forall \mathbf q_2\in H(\mbox{\rm div},D),
\end{multline}
\begin{multline} \label{r15aat1r}
-\int_{D}v_2(x)\mbox{\rm div\,}{\mathbf p}_1(x)\,dx-\int_Dc(x)p_2(x) v_2(x)dx
\\=\int_{D}v_2(x)Q^{-1} z_2(x;\hat u)\,dx \quad \forall v_2\in
L^2(D).
\end{multline}
Setting in (\ref{r16at1r}) $\mathbf q_2=\tilde {\mathbf z}_1(\cdot;w )$ and in (\ref{r15aat1r})
$v_2=\tilde z_2(\cdot;w ),$ we obtain
\begin{equation}\label{r16at1rr}
\int_{D}((\mathbf A(x))^{-1}\mathbf {p}_1(x),\tilde {\mathbf z}_1(x;w ))_{\mathbb
R^n}dx\\-\int_{D}p_2(x)\mbox{\rm div\,}\tilde {\mathbf z}_1(x;w )\,dx=0,
\end{equation}
\begin{multline} \label{r15at1rr}
-\int_{D}\tilde z_2(x;w )\mbox{\rm div\,}{\mathbf p}_1(x)\,dx-\int_Dc(x)p_2(x) \tilde z_2(x;w )dx
\\=\int_{D}v_2(x)Q^{-1} z_2(x;\hat u)\,dx.
\end{multline}
From (\ref{r16at1rr}) and (\ref{r15at1rr}), we find
$$
(Q^{-1}z_2(\cdot;\hat u),\tilde z_2(\cdot;w ))_{L^2(D)}
=-\int_{D}\tilde z_2(x;w)\mbox{\rm div\,}\mathbf p_1(x)\,dx-\int_Dc(x)p_2(x) \tilde z_2(x;w )dx
$$
\begin{equation*}
+\int_{D}((\mathbf A(x))^{-1}\mathbf {p}_1(x),\tilde {\mathbf z}_1(x;w ))_{\mathbb
R^n}dx\\-\int_{D}p_2(x)\mbox{\rm div\,}\tilde {\mathbf z}_1(x;w )\,dx
\end{equation*}
\begin{equation*}
=\int_{D}(((\mathbf A(x))^{-1})^T\tilde {\mathbf z}_1(x;w ),\mathbf {p}_1(x))_{\mathbb
R^n}dx
\\-\int_{D}\tilde z_2(x;w)\mbox{\rm div\,}\mathbf p_1(x)\,dx
\end{equation*}
$$
- \int_{D}p_2(x)\mbox{\rm div\,}\tilde {\mathbf z}_1(x;w )\,dx-\int_Dc(x)p_2(x) \tilde z_2(x;w )dx
$$
$$
=- \int_{D}(C_1^tJ_{H_1}w_1,\mathbf {p}_1(x))_{\mathbb
R^n}dx-\int_{D}(C_2^tJ_{H_2}w_2)p_2\,dx
$$
$$
=-(w_1,C_1\mathbf {p}_1)_{H_1}-(w_2,C_2p_2)_{H_2}.
$$
Last relation and (\ref{z43a}) imply
\begin{equation*}
(w_1,C_1\mathbf {p}_1)_{H_1}+(w_2,C_2p_2)_{H_2}\\=
(\tilde Q_1^{-1}\hat u_1,w_1)_{H_1}+(\tilde Q_2^{-1}\hat u_2,w_2)_{H_2}.
\end{equation*}
Hence,
\begin{equation}\label{z43af}
\hat u_1=\tilde Q_1C_1\mathbf {p}_1,\quad\hat u_2=\tilde Q_2C_2p_2.
\end{equation}

Setting these expressions into \eqref{113}, \eqref{115}
and and denoting $\mathbf z_1(x;\hat u)=:\hat {\mathbf z}_1(x),$
$z_2(x;\hat u)=:\hat z_2(x),$ we establish that functions
$(\hat {\mathbf z}_1,$ $\hat z_2),$ $(\mathbf p_1,$ and $p_2)$ satisfy
\eqref{r13at1} -- \eqref{r15at1}; the unique solvability of the
problem \eqref{r13at1} -- \eqref{r15at1} follows from the existence of the unique minimum point
$\hat u$ of functional $I(u)$.

Now let us establish the validity of formula \eqref{rll}.
From (\ref{m20}) at $u=\hat u$ and \eqref{z43af}, it follows
$$
\sigma^2=I(\hat u)
=(Q^{-1}z_2(\cdot;\hat u),
z_2(\cdot;\hat u))_{L^2(D)}\\+(\tilde
Q_1^{-1}\hat u_1,\hat u_1)_{H_1}+(\tilde Q_2^{-1}\hat u_2,\hat u_2)_{H_2}
$$
\begin{equation}\label{z43aff}
=(Q^{-1}\hat z_2,
\hat z_2)_{L^2(D)}\\+(C_1\mathbf {p}_1,\tilde Q_1C_1\mathbf {p}_1)_{H_1}
+(C_2p_2,\tilde Q_2C_2p_2)_{H_2}.
\end{equation}
Transform the first term in \eqref{z43aff}. Setting in \eqref{r16at1r} and \eqref{r15aat1r}
$\mathbf q_2=\mathbf {\hat z}_1$ and $v_2=\hat z_2,$
we find
\begin{equation*}
\int_{D}((\mathbf A(x))^{-1}\mathbf {p}_1(x),\mathbf {\hat z}_1(x))_{\mathbb
R^n}dx\\- \int_{D}p_2(x)\mbox{\rm div\,}\mathbf {\hat z}_1(x)\,dx=0,
\end{equation*}
$$
-\int_{D}\hat z_2(x)\mbox{\rm div\,}\mathbf p_1(x)\,dx-\int_{D}c(x)p_2(x)\hat z_2(x)\,dx
=\int_{D}\hat z_2(x)Q^{-1}\hat z_2(x)\,dx.
$$
From the latter relations and from equations
\eqref{r13at1} and \eqref{r14at1} with $\mathbf q_1=\mathbf p_1$
and $v_1=p_2,$ we have
$$
(Q^{-1}\hat z_2,
\hat z_2)_{L^2(D)}=-\int_{D}\hat z_2(x)\mbox{\rm div\,}\mathbf p_1(x)\,dx-\int_{D}c(x)p_2(x)\hat z_2(x)\,dx
$$
$$
+\int_{D}((\mathbf A(x))^{-1}\mathbf {p}_1(x),\mathbf {\hat z}_1(x))_{\mathbb
R^n}dx- \int_{D}p_2(x)\mbox{\rm div\,}\mathbf {\hat z}_1(x)\,dx
$$
$$
=\int_{D}(((\mathbf A(x))^{-1})^T\,\mathbf {\hat z}_1(x),\mathbf {p}_1(x))_{\mathbb
R^n}dx-\int_{D}z_2(x)\mbox{\rm div\,}\mathbf p_1(x)\,dx
$$
$$
- \int_{D}p_2(x)\mbox{\rm div\,}\mathbf {\hat z}_1(x)\,dx-\int_{D}c(x)p_2(x)\hat z_2(x)\,dx
$$
$$
=\int_{D}(\mathbf l_1(x)-C_1^tJ_{H_1}\tilde Q_1C_1\mathbf
{p}_1(x),\mathbf p_1(x))_{\mathbb R^n}\,dx+\int_{D}(l_2(x)-C_2^tJ_{H_2}\tilde Q_2C_2
p_2(x))p_2(x)\,dx
$$
$$
=\int_{D}(\mathbf l_1(x),\mathbf
p_1(x))_{\mathbb R^n}\, dx+\int_{D}l_2(x)p_2(x)\, dx
$$
\begin{equation}\label{yry}
-(C_1\mathbf {p}_1,\tilde Q_1C_1\mathbf {p}_1)_{H_1}
-(C_2p_2,\tilde Q_2C_2p_2)_{H_2}.
\end{equation}
From \eqref{z43aff} and \eqref{yry}, we otain \eqref{rll}. Theorem is proved.
\end{proof}

Note that the pair of functions $(\hat{\mathbf z}(x),\hat z_2(x))=(\mathbf z_1(x;\hat u),z_2(x;\hat u))$
and the element $u=\hat u\in H$
is a solution of optimal control problem
(\ref{113}), (\ref{115}), \eqref{m20}.

In the following theorem we obtain an alternative representation
for the guaranteed estimate of
quantity $l(\mathbf j,\varphi)$ which is expressed via a solution of certain system of mixed
variational equations not
depending on $\mathbf l_1$ and $l_2$.
\begin{pred}\label{at666}
The guaranteed estimate of $l({\mathbf j},\varphi)$ has the form
\begin{equation}\label{Altgxx}
\widehat{\widehat {l({\mathbf j},\varphi)}}=l(\hat{\mathbf j},\hat \varphi),
\end{equation}
where the pair
$(\hat{\mathbf j},\hat \varphi)\in H(\mbox{\rm
div},D)\times L^2(D)$ is a solution to the following problem:
\begin{multline} \label{13aaag}
\int_{D}(((\mathbf A(x))^{-1})^T\hat{\mathbf p}_1(x),\mathbf
q_1(x))_{\mathbb R^n}dx- \int_{D}\hat p_2(x)\mbox{\rm
div\,}\mathbf q_1(x)\,dx\\ =\int_{D}(C_1^tJ_{H_1}\tilde
Q_1(y_1-C_1\hat{\mathbf
j})(x),\mathbf q_1(x))_{\mathbb R^n}dx
\,\,\forall\mathbf q_1\in H(\mbox{\rm div},D),
\end{multline}
\begin{multline} \label{p14aaag}
-\int_{D}v_1(x)\mbox{\rm div\,}\hat{\mathbf p}_1(x)\,dx-\int_{D}c(x)\hat p_2(x)v_1(x)\,dx
\\=\int_DC_2^tJ_{H_2}\tilde
Q_2(y_2-C_2\hat
\varphi)(x)v_1(x)\,dx\quad\forall v_1\in L^2(D),
\end{multline}
\begin{equation}\label{pr16at1g'}
\int_{D}((\mathbf A(x))^{-1}\hat{\mathbf j}(x),\mathbf
q_2(x))_{\mathbb R^n}\,dx- \int_{D}\hat\varphi(x)\mbox{\rm
div\,}\mathbf q_2(x)\,dx=0\quad
\forall \mathbf q_2\in H(\mbox{\rm div},D),
\end{equation}
\begin{multline} \label{18aaagq}
-\int_{D}v_2(x)\mbox{\rm div\,}\hat{\mathbf j}(x)\,dx-\int_{D}c(x)\hat\varphi(x)v_2(x)\,dx
\\=\int_{D}v_2(x)(Q^{-1}\hat p_2(x)-f_0(x))\,dx \quad
\forall v_2\in L^2(D),
\end{multline}
where equalities \eqref{13aaag}--\eqref{18aaagq} are fulfilled with probability
$1.$ Problem \eqref{13aaag} -- \eqref{18aaagq} is uniquely
solvable.

The random fields
$\hat{\mathbf j}$, $\hat{\mathbf p}_1$ and $\hat\varphi$,  $\hat p_2,$ whose realizations
satisfy problem \eqref{13aaag}--\eqref{18aaagq},
belong to the spaces
$L^2(\Omega,H(\mbox{\rm
div},D))$ and $L^2(\Omega,L^2(D)),$ respectively.
\end{pred}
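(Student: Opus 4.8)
The plan is to argue pathwise: for a fixed elementary event $\omega$ the data $y_1(\omega)\in H_1$, $y_2(\omega)\in H_2$ are fixed, and I treat \eqref{13aaag}--\eqref{18aaagq} as a deterministic coupled system. I would proceed in three stages. First, establish that this system is uniquely solvable and that its solution depends linearly and boundedly on $(y_1(\omega),y_2(\omega),f_0)$. Second, verify the representation \eqref{Altgxx} by a duality computation that cross-tests \eqref{13aaag}--\eqref{18aaagq} against the deterministic system \eqref{r13at1}, \eqref{r14at1}, \eqref{r16at1r}, \eqref{r15aat1r} produced in the proof of the previous theorem. Third, deduce from the bound of the first stage that the realizations belong to the Bochner spaces $L^2(\Omega,H(\mathrm{div};D))$ and $L^2(\Omega,L^2(D))$.

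For unique solvability I would imitate the previous theorem: system \eqref{13aaag}--\eqref{18aaagq} has the same block saddle-point structure as \eqref{r13at1}--\eqref{r15at1} and is the system of optimality conditions of a quadratic functional that is strictly convex and coercive thanks to the positive-definiteness \eqref{posdefme} of $\tilde Q_1^{-1},\tilde Q_2^{-1}$ and the coercivity of $Q^{-1}$ (a positive selfadjoint operator with bounded inverse is bounded below); Theorem 1.1 of \cite{BIBLlio} then yields a unique minimizer whose Euler--Lagrange system is exactly \eqref{13aaag}--\eqref{18aaagq}. Uniqueness alone can also be seen directly: putting $y_1=y_2=0$, $f_0=0$ and testing \eqref{13aaag}, \eqref{p14aaag}, \eqref{pr16at1g'}, \eqref{18aaagq} with $\mathbf q_1=\hat{\mathbf j}$, $v_1=\hat\varphi$, $\mathbf q_2=\hat{\mathbf p}_1$, $v_2=\hat p_2$ and combining, the coupling terms $a^*(\hat{\mathbf p}_1,\hat{\mathbf j})-a(\hat{\mathbf j},\hat{\mathbf p}_1)$ and the symmetric $c$-terms cancel and one is left with
$$(\tilde Q_1C_1\hat{\mathbf j},C_1\hat{\mathbf j})_{H_1}+(\tilde Q_2C_2\hat\varphi,C_2\hat\varphi)_{H_2}+(Q^{-1}\hat p_2,\hat p_2)_{L^2(D)}=0,$$
forcing $\hat p_2=0$ and $C_1\hat{\mathbf j}=C_2\hat\varphi=0$; feeding this back turns \eqref{13aaag}--\eqref{p14aaag} and \eqref{pr16at1g'}--\eqref{18aaagq} into two homogeneous mixed problems, so Theorem 1.2 of \cite{Brezzi} gives $\hat{\mathbf p}_1=0$, $\hat{\mathbf j}=0$, $\hat\varphi=0$.

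The core is the representation identity, which I would obtain from two cross-testings. In the first I test \eqref{r13at1} with $\mathbf q_1=\hat{\mathbf j}$, \eqref{r14at1} with $v_1=\hat\varphi$, \eqref{pr16at1g'} with $\mathbf q_2=\hat{\mathbf z}_1$, and \eqref{18aaagq} with $v_2=\hat z_2$; subtracting in pairs eliminates $a^*(\hat{\mathbf z}_1,\hat{\mathbf j})-a(\hat{\mathbf j},\hat{\mathbf z}_1)$ and the symmetric $c$-terms, and adding the two resulting identities cancels the $b$-terms, leaving
$$l(\hat{\mathbf j},\hat\varphi)=(C_1^tJ_{H_1}\tilde Q_1C_1\mathbf p_1,\hat{\mathbf j})_{L^2(D)^n}+(C_2^tJ_{H_2}\tilde Q_2C_2p_2,\hat\varphi)_{L^2(D)}+(Q^{-1}\hat p_2-f_0,\hat z_2)_{L^2(D)}.$$
In the second I test \eqref{13aaag} with $\mathbf q_1=\mathbf p_1$, \eqref{p14aaag} with $v_1=p_2$, \eqref{r16at1r} with $\mathbf q_2=\hat{\mathbf p}_1$, and \eqref{r15aat1r} with $v_2=\hat p_2$; the same cancellations give
$$(Q^{-1}\hat p_2,\hat z_2)_{L^2(D)}=(C_1^tJ_{H_1}\tilde Q_1y_1,\mathbf p_1)_{L^2(D)^n}-(C_1^tJ_{H_1}\tilde Q_1C_1\hat{\mathbf j},\mathbf p_1)_{L^2(D)^n}+(C_2^tJ_{H_2}\tilde Q_2y_2,p_2)_{L^2(D)}-(C_2^tJ_{H_2}\tilde Q_2C_2\hat\varphi,p_2)_{L^2(D)}.$$
Substituting the second relation into the first and using the selfadjointness of $\tilde Q_1,\tilde Q_2,Q^{-1}$ and the identity $(C_i^tJ_{H_i}\tilde Q_iC_ia,b)_{L^2}=(C_i^tJ_{H_i}\tilde Q_iC_ib,a)_{L^2}$, the feedback terms cancel and I obtain $l(\hat{\mathbf j},\hat\varphi)=(\mathbf p_1,C_1^tJ_{H_1}\tilde Q_1y_1)_{L^2(D)^n}+(p_2,C_2^tJ_{H_2}\tilde Q_2y_2)_{L^2(D)}-(f_0,\hat z_2)_{L^2(D)}$. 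Rewriting $(\mathbf p_1,C_1^tJ_{H_1}\tilde Q_1y_1)_{L^2(D)^n}=(C_1\mathbf p_1,\tilde Q_1y_1)_{H_1}=(y_1,\tilde Q_1C_1\mathbf p_1)_{H_1}=(y_1,\hat u_1)_{H_1}$, likewise for the second term, and recognizing $-(f_0,\hat z_2)_{L^2(D)}=\hat c$ from \eqref{hat}, the right-hand side is precisely $(y_1,\hat u_1)_{H_1}+(y_2,\hat u_2)_{H_2}+\hat c=\widehat{\widehat{l(\mathbf j,\varphi)}}$ of \eqref{exactest}, which establishes \eqref{Altgxx}.

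Finally, since the data enter \eqref{13aaag}--\eqref{18aaagq} linearly, the solution operator $(y_1,y_2,f_0)\mapsto(\hat{\mathbf j},\hat\varphi,\hat{\mathbf p}_1,\hat p_2)$ is linear and bounded by an a priori estimate of the type \eqref{eqn2'ap} for the coupled system; composing it with the measurable maps $\omega\mapsto y_i(\omega)$ gives measurability, and squaring, integrating over $\Omega$ and invoking $y_i\in L^2(\Omega,H_i)$ gives the asserted membership in $L^2(\Omega,H(\mathrm{div};D))$ and $L^2(\Omega,L^2(D))$. I expect the main obstacle to be the unique solvability of the coupled saddle-point system \eqref{13aaag}--\eqref{18aaagq}: it does not fall under Brezzi's theorem directly because of the feedback couplings $C_i^tJ_{H_i}\tilde Q_iC_i$ and $Q^{-1}$, so establishing existence genuinely requires the optimal-control reduction (the uniqueness energy estimate above does not by itself give existence in infinite dimensions); by contrast, once the test functions are chosen the cross-testing identity is only careful bookkeeping.
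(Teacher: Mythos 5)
Your proposal is correct, and its core --- the duality verification of \eqref{Altgxx} --- coincides with the paper's own proof down to the choice of test functions: the paper pairs \eqref{13aaag}, \eqref{p14aaag} (tested with $\mathbf p_1$, $p_2$) against \eqref{r16at1'}, \eqref{r15at1} (tested with $\hat{\mathbf p}_1$, $\hat p_2$) to obtain \eqref{g2hat2tg}, and pairs \eqref{pr16at1g'}, \eqref{18aaagq} (tested with $\hat{\mathbf z}_1$, $\hat z_2$) against \eqref{r13at1}, \eqref{r14at1} (tested with $\hat{\mathbf j}$, $\hat\varphi$); you run the same two cross-testings merely in the opposite direction, starting from $l(\hat{\mathbf j},\hat\varphi)$ and arriving at $(y_1,\hat u_1)_{H_1}+(y_2,\hat u_2)_{H_2}+\hat c$, rather than starting from \eqref{lptg}, which is only a reorganization. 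Where you genuinely diverge is the treatment of solvability and of the stochastic layer. The paper never argues pathwise: it poses the auxiliary optimal control problem in Bochner spaces, with state equations in expectation form \eqref{13aaa1atg}, \eqref{p14aaa1tg} and a cost coercive on $L^2(\Omega,H)$, derives the feedback law $\hat u_1=\tilde Q_1C_1\hat{\mathbf j}$, $\hat u_2=\tilde Q_2C_2\hat\varphi$ by repeating the argument of Theorem 1, and then appeals to the argument of \cite{Ernst} both for passing from the expectation formulation to the probability-one pathwise statement and for the membership in $L^2(\Omega,H(\mathrm{div};D))$ and $L^2(\Omega,L^2(D))$. You instead freeze $\omega$, run the deterministic control reduction via Theorem 1.1 of \cite{BIBLlio} for each realization, and recover Bochner membership a posteriori from linearity and boundedness of the solution operator applied to $y_i\in L^2(\Omega,H_i)$; this is sound --- the required a priori bound follows from your own energy identity (which, with the data retained, bounds $\|C_1\hat{\mathbf j}\|_{H_1}$, $\|C_2\hat\varphi\|_{H_2}$, $\|\hat p_2\|_{L^2(D)}$ by the data) combined with the Brezzi estimate of type \eqref{eqn2'ap} applied to the two residual mixed problems --- and it makes the proof more self-contained, at the price of redoing what the paper outsources to \cite{Ernst}. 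Your direct energy proof of uniqueness (testing the homogeneous system so that the saddle terms cancel and only $(\tilde Q_1C_1\hat{\mathbf j},C_1\hat{\mathbf j})_{H_1}+(\tilde Q_2C_2\hat\varphi,C_2\hat\varphi)_{H_2}+(Q^{-1}\hat p_2,\hat p_2)_{L^2(D)}$ survives) is correct and does not appear in the paper, which extracts uniqueness from uniqueness of the control minimizer; your caveat that this estimate yields uniqueness but not existence, so that existence must come from the control reduction, matches the paper's mechanism exactly.
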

\begin{proof}
Note that unique solvability of problem \eqref{13aaag}--\eqref{18aaagq} at realizations $y_1$ and $y_2$
that belong with probability $1$ to the spaces $H_1$ and $H_2,$ respectively,
can be proved
similarly as to the problem \eqref{r13at1}--\eqref{r15at1}.

Namely, consider optimal control problem of the system described by
\footnote{ Unique solvability of problem \eqref{13aaa1tg}--\eqref{p14aaa1tg} for every fixed $u=(u_1,u_2)$ follows from correctness of stochastic
statement of mixed variational problem (2.2) on page 1427 in \cite{Ernst}.}
\begin{equation}\label{13aaa1tg}
\hat{\mathbf p}_1\in L^2(\Omega,H(\mbox{\rm
div},D))\quad\hat p_2\in L^2(\Omega,L^2(D)),
\end{equation}
\begin{multline} \label{13aaa1atg}
\mathbb E\Bigl[\int_{D}(((\mathbf A(x))^{-1})^T\hat{\mathbf p}_1(x;u),\mathbf
q_1(x))_{\mathbb R^n}dx\Bigr]- \mathbb E\Bigl[\int_{D}\hat p_2(x;u)\mbox{\rm
div\,}\mathbf q_1(x)\,dx\Bigr] \\=\mathbb E\Bigl[\int_{D}(\mathbf d_1(x)-(C_1^tJ_{H_1}u_1)(x)),\mathbf q_1(x))_{\mathbb R^n}dx\Bigr]
\,\,\forall\mathbf q_1\in L^2(\Omega,H(\mbox{\rm div},D)),
\end{multline}
\begin{multline} \label{p14aaa1tg}
-\mathbb E\Bigl[\int_{D}v_1(x)\mbox{\rm div\,}\hat{\mathbf p}_1(x;u)\,dx\Bigr]
-\mathbb E\Bigl[\int_{D}c(x)\hat p_2(x;u)v_1(x)\,dx\Bigr]
\\=\mathbb E\Bigl[\int_D(d_2(x)-(C_2^tJ_{H_2}u_2)(x))v_1(x)\,dx\Bigr]\quad\forall v_1\in L^2(\Omega,L^2(D)),
\end{multline}
with cost function
$$
I(u)=\mathbb E\Bigl[\int_{D} Q^{-1}(\hat p_2(\cdot;u)
-Qf_0)(x)(\hat p_2(\cdot;u)-Qf_0)(x)\, dx\Bigr]
$$
\begin{equation*} \label{g24gsttg}
+ (\tilde
Q_1^{-1}u_1,u_1)_{L^2(\Omega,H_1)}+(\tilde Q_2^{-1}u_2,u_2)_{L^2(\Omega,H_2)}\!\to
\inf_{u=(u_1,u_2)\in L^2(\Omega,H)=L^2(\Omega,H_1\times H_2)},
\end{equation*}
where
$$
\mathbf d_1(x)=C_1^tJ_{H_1}\tilde
Q_1y_1(x),
$$
$$
d_2(x)=C_2^tJ_{H_2}\tilde
Q_2y_2(x).
$$
Functional $I(u)$ is quadratic and coercive on the space $L^2(\Omega,H).$
Therefore, there exists a unique element $\hat
u\in L^2(\Omega,H)$ such that
$$
I(\hat u)= \inf_{u\in L^2(\Omega,H)} I(u).
$$
Next, denoting by $(\hat{\mathbf j},\hat \varphi)\in L^2(\Omega,H(\mbox{\rm
div},D))\times L^2(\Omega,L^2(D))$ a unique solution of the problem:
\begin{multline*}
\mathbb E\Bigl[\int_{D}((\mathbf A(x))^{-1}\hat{\mathbf j}(x),\mathbf
q_2(x))_{\mathbb R^n}\,dx\Bigr]\\- \mathbb E\Bigl[\int_{D}\hat\varphi(x)\mbox{\rm
div\,}\mathbf q_2(x)\,dx\Bigr]=0\quad
\forall \mathbf q_2\in L^2(\Omega,H(\mbox{\rm div},D)),
\end{multline*}
\begin{multline*}
-\mathbb E\Bigl[\int_{D}v_2(x)\mbox{\rm div\,}\hat{\mathbf j}(x)\,dx\Bigr]
-\mathbb E\Bigl[\int_{D}c(x)\hat\varphi(x)v_2(x)\,dx\Bigr]
\\=\mathbb E\Bigl[\int_{D}v_2(x)(Q^{-1}\hat p_2(x;\hat u)-f_0(x))\,dx\Bigr] \quad
\forall v_2\in L^2(\Omega,L_2),
\end{multline*}
and making use of virtually the same reasoning that led to the proof of Theorem 1,
we arrive at the equalities $\hat u_1=\tilde Q_1C_1\hat{\mathbf
j}$ and $\hat u_2=\tilde Q_2C_2\hat
\varphi.$ Denoting $\hat{\mathbf p}_1(x)=\hat{\mathbf p}_1(x;\hat u),$ $\hat p_2(x)=\hat
p_2(x;\hat u),$ we deduce from the latter statement the unique solvability of problem
\begin{multline*}
\mathbb E\Bigl[\int_{D}(((\mathbf A(x))^{-1})^T\,\hat{\mathbf p}_1(x),\mathbf
q_1(x))_{\mathbb R^n}dx\Bigr]- \mathbb E\Bigl[\int_{D}\hat p_2(x)\mbox{\rm
div\,}\mathbf q_1(x)\,dx\Bigr]\\ =\mathbb E\Bigl[\int_{D}(C_1^tJ_{H_1}\tilde
Q_1(y_1-C_1\hat{\mathbf
j})(x),\mathbf q_1(x))_{\mathbb R^n}dx\Bigr]
\,\,\forall\mathbf q_1\in L^2(\Omega,H(\mbox{\rm div},D)),
\end{multline*}
\begin{multline*}
-\mathbb E\Bigl[\int_{D}v_1(x)\mbox{\rm div\,}\hat{\mathbf p}_1(x)\,dx\Bigr]
-\mathbb E\Bigl[\int_{D}c(x)\hat p_2(x)v_1(x)\,dx\Bigr]
\\=\mathbb E\Bigl[\int_DC_2^tJ_{H_2}\tilde
Q_2(y_2-C_2\hat
\varphi)(x)v_1(x)\,dx\Bigr]\quad\forall v_1\in L^2(\Omega,L^2(D)),
\end{multline*}
\begin{multline*}
\mathbb E\Bigl[\int_{D}((\mathbf A(x))^{-1}\hat{\mathbf j}(x),\mathbf
q_2(x))_{\mathbb R^n}\,dx\Bigr]\\- \mathbb E\Bigl[\int_{D}\hat\varphi(x)\mbox{\rm
div\,}\mathbf q_2(x)\,dx\Bigr]=0\quad
\forall \mathbf q_2\in L^2(\Omega,H(\mbox{\rm div},D)),
\end{multline*}
\begin{multline*}
-\mathbb E\Bigl[\int_{D}v_2(x)\mbox{\rm div\,}\hat{\mathbf j}(x)\,dx\Bigr]
-\mathbb E\Bigl[\int_{D}c(x)\hat\varphi(x)v_2(x)\,dx\Bigr]
\\=\mathbb E\Bigl[\int_{D}v_2(x)(Q^{-1}\hat p_2(x)-f_0(x))\,dx\Bigr] \quad
\forall v_2\in L^2(\Omega,L^2(D)).
\end{multline*}
From here following the argument of paper \cite{Ernst}, we conclude that problem
\eqref{13aaag}--\eqref{18aaagq} is uniquely solvable.

Now let us prove the representation \eqref{Altgxx}.
By virtue of (\ref{clas}) and (\ref{hat}),
$$
\widehat{\widehat {l(\mathbf j,\varphi)}}=(y_1,\hat u_1)_{H_1}+(y_2,\hat
u_2)_{H_2}+\hat c
$$
\begin{equation}\label{lptg}
=(y_1,\tilde Q_1C_1\mathbf
p_1)_{H_1}+(y_2,\tilde Q_2C_2
p_2)_{H_2}-
(\hat z_2,f_0)_{L^2(D)}.
\end{equation}
Putting in (\ref{13aaag}) and (\ref{p14aaag})
$\mathbf q_1=\mathbf p_1$ and $v_1=p_2,$ we obtain
\begin{multline} \label{13aaatg}
\int_{D}(((\mathbf A(x))^{-1})^T\hat{\mathbf p}_1(x),\mathbf
p_1(x))_{\mathbb R^n}dx- \int_{D}\hat p_2(x)\mbox{\rm
div\,}\mathbf p_1(x)\,dx\\ =\int_{D}(C_1^tJ_{H_1}\tilde
Q_1(y_1-C_1\hat{\mathbf
j})(x),\mathbf p_1(x))_{\mathbb R^n}dx,
\end{multline}
\begin{multline} \label{p14aaatg}
-\int_{D}p_2(x)\mbox{\rm div\,}\hat{\mathbf p}_1(x)\,dx-\int_{D}c(x)\hat p_2(x)p_2(x)\,dx
\\=\int_D(C_2^tJ_{H_2}\tilde
Q_2(y_2-C_2\hat
\varphi)(x)p_2(x)\,dx.
\end{multline}
Putting in \eqref{r16at1'} and \eqref{r15at1} $\mathbf q_2=\hat{\mathbf p}_1$ and $v_2=\hat p_2,$
we find
\begin{equation}\label{r16at1ztg}
\int_{D}((\mathbf A(x))^{-1}\mathbf {p}_1(x),\hat{\mathbf p}_1(x))_{\mathbb
R^n}dx- \int_{D}p_2(x)\mbox{\rm div\,}\hat{\mathbf p}_1(x)\,dx=0,
\end{equation}
\begin{equation} \label{r15aat1ztg}
-\int_{D}\hat p_2(x)\mbox{\rm div\,}\mathbf p_1(x)\,dx-\int_{D}c(x)p_2(x)\hat p_2(x)\,dx
=\int_{D}\hat p_2(x)Q^{-1}\hat z_2(x)\,dx.
\end{equation}
Since the sum of the left-hand sides of equalities \eqref{13aaatg} and \eqref{p14aaatg} is equal to
the sum of the left-hand sides of \eqref{r16at1ztg} and \eqref{r15aat1ztg}, we find from \eqref{lptg}
\begin{equation}\label{g2hat2tg}
\widehat{\widehat
{l(\mathbf j,\varphi)}}=(C_1\hat{\mathbf
j},\tilde Q_1C_1\mathbf p_1)_{H_1}
+(C_2\hat\varphi,\tilde Q_2C_2p_2)_{H_2}
\\+(Q^{-1}\hat p_2-f_0,\hat z_2)_{L^2(D)}.
\end{equation}
Next, putting in \eqref{pr16at1g'}, \eqref{18aaagq} $\mathbf q_2=\hat{\mathbf z}_1,$ $v_2=\hat z_2$ and in \eqref{r13at1}, \eqref{r14at1}
$\mathbf q_1=\hat{\mathbf j},$
$v_1=\hat \varphi,$ we obtain
\begin{equation}\label{pr16at1ytg}
\int_{D}((\mathbf A(x))^{-1}\hat{\mathbf j}(x),\hat{\mathbf
z}_1(x))_{\mathbb R^n}\,dx- \int_{D}\hat\varphi(x)\mbox{\rm
div\,}\hat{\mathbf z}_1(x)\,dx=0,
\end{equation}
\begin{multline} \label{18aaaytg}
-\int_{D}\hat z_2(x)\mbox{\rm div\,}\hat{\mathbf j}(x)\,dx-\int_{D}c(x)\hat\varphi(x)\hat z_2(x)\,dx
\\=\int_{D}\hat z_2(x)(Q^{-1}\hat p_2(x)-f_0(x))\,dx,
\end{multline}
and
\begin{multline} \label{r13at1ytg}
\int_{D}(((\mathbf A(x))^{-1})^T\hat{\mathbf z}_1(x),\hat{\mathbf j}(x))_{\mathbb R^n}dx-\int_{D}\hat z_2(x)\mbox{\rm
div\,}\hat{\mathbf j}(x)\,dx
\\=\int_{D}(\mathbf l_1(x)-C_1^tJ_{H_1}\tilde Q_1C_1\mathbf
{p}_1(x),\hat{\mathbf j}(x))_{\mathbb R^n}\,dx,
\end{multline}
\begin{multline} \label{r14at1ytg}
-\int_{D}\hat \varphi(x)\mbox{\rm div\,}\hat{\mathbf z}_1(x)\,dx-\int_{D}c(x)z_2(x)\hat \varphi(x)\,dx
\\=\int_{D}(l_2(x)-C_2^tJ_{H_2}\tilde Q_2C_2
p_2(x))\hat \varphi(x)\,dx.
\end{multline}
Relations \eqref{pr16at1ytg}--\eqref{r14at1ytg} imply
$$
\int_{D}\hat z_2(x)(Q^{-1}\hat p_2(x)-f_0(x))\,dx
=(\mathbf l_1-\tilde Q_1C_1\hat{\mathbf p}_1,C_1\hat{\mathbf j}(x))_{H_1}
+(l_2-\tilde Q_2C_2\hat p_2,C_2\hat \varphi(x))_{H_2}.
$$
By virtue of \eqref{g2hat2tg}, it follows from here representation \eqref{Altgxx}.
\end{proof}
{\bf Remark 1.}
Notice that
in  representation $l(\hat{\mathbf j},
\hat \varphi)$
for minimax estimate
$\widehat{\widehat {l(\mathbf j,
\varphi)}}$ the functions $\hat{\mathbf j},
\hat \varphi$ which are defined from equations \eqref{13aaag}--\eqref{18aaagq} do not depend on specific form of functional
$l$ and hence can be taken as a good estimate
for unknown solution $\mathbf j,\varphi$ of Dirichlet problem (\ref{eqsystem1}), (\ref{eqsystem2}).

\section{Approximate Guaranteed Estimates: The Theorems on Convergence}

In this section we introduce the notion of approximate guaranteed estimates of $l(\mathbf j,\varphi)$ and prove their convergence to
$\widehat{\widehat {l(\mathbf j,\varphi)}}$.
To do this,
we use
the mixed finite element method for solving the
aforementioned problems (\ref{r13at1})-(\ref{r15at1}) and \eqref{13aaag}--\eqref{18aaagq} and obtain approximate
estimates via solutions of linear algebraic equations. We show
their convergence to the optimal estimates.

In this section $D$ is supposed to be bounded and connected domain of $\mathbb R^n$
with
polyhedral boundary $\Gamma.$
First, we note that according to the mixed finite element method, an approximation $(\mathbf j^h,\varphi^h)$ to the solution $(\mathbf j,\varphi)$
of the problem \eqref{eqn2}, \eqref{eqn2'}
is sought in the finite element space
$V_1^h\times V_2^h$ given by
$$
V_1^h=\{\mathbf q^h\in H(\mbox{div};\,D): \mathbf q^h|_K\in (P^k(K))^n+\mathbf xP^k(K)\quad\forall K\in\mathcal T_h\},
$$
$$
V_2^h=\{v^h\in L^2(D): v^h|_K\in P^k(K)\quad\forall K\in\mathcal T_h\},
$$
where\label{page25} $\mathcal T_h$ is a simplicial triangulation  of $D$, $P^k(K)$ denotes the space of polynomials on $K$ of degree at most $k$, $k\geq 0,$ $\mathbf x:=(x_1,\dots,x_n)$,
and is defined by requiring that
\begin{equation} \label{eqnn2}
a(\mathbf j^h,\mathbf q^h)+b(\mathbf q^h,\varphi^h)=0\quad \forall \mathbf q^h \in V_1^h,
\end{equation}
\begin{equation} \label{eqnn2'}
b(\mathbf j^h,v^h)- c(\varphi^h,v^h)=(f,v^h)_{L^2(D)}\quad\forall v^h\in V_2^h
\end{equation}
Here the bilinear forms $a(\cdot,\cdot),$ $b(\cdot,\cdot),$ and $c(\cdot,\cdot)$
are defined by \eqref{a}--\eqref{c}.
Hence system \eqref{eqnn2}, \eqref{eqnn2'} can be rewritten in the form
\begin{equation}\label{generalizedll}
\int_D\!((\mathbf A(x))^{-1}\mathbf j^h(x),\mathbf q^h(x))_{\mathbb
R^n}dx-\int_D\varphi^h(x)\mbox{div}\, \mathbf
q^h(x)dx=0\quad \forall \mathbf q^h\in V_1^h
\end{equation}
\begin{equation}\label{generalized1ll}
\int_Dv(x)\mbox{div}\,\mathbf j^h(x)dx+\int_Dc(x)\varphi^h (x) v^h(x)dx=\int_Df(x)v^h(x)\,dx\,\,\forall v^h\in V_2^h.
\end{equation}

It can be easily verified that the bilinear form $a|_{V_1^h\times V_1^h}$
is uniformly coercive on $\mbox{Ker}\, B|_{V_1^h}$ and that the bilinear form $b|_{V_1^h\times V_2^h}$
satisfies the inf-sup condition (Babuska-Brezzi condition). Moreover, we have $\mbox{Ker}\, B^t|_{V_2^h}=\emptyset$ and
therefore, the mixed discretization \eqref{eqnn2}, \eqref{eqnn2'} (or what is the same \eqref{generalizedll}, \eqref{generalized1ll}) is uniquely solvable and the following estimates
are valid
\begin{multline}\label{m7nn}
\|\mathbf j-\mathbf j^h\|_{H(\mbox{\rm\small div},D)}+\|\varphi-\varphi^h\|_{L^2(D)}\\ \leq \tilde c\left(\inf_{\mathbf q^h\in V_1^h}
\|\mathbf j-\mathbf q^h\|_{H(\mbox{\rm\small div},D)}+\inf_{v^h\in V_2^h}\|\varphi-v^h\|_{L^2(D)}\right),
\end{multline}
\begin{equation}\label{m7nnl}
\|\mathbf j^h\|_{H(\mbox{\rm\small div},D)}+\|\varphi^h\|_{L^2(D)}\\ \leq \tilde{\tilde c}\|f\|_{L^2(D)},
\end{equation}
where
$\tilde c$ and $\tilde{\tilde c}$ are constant not depending on $h$
(cf. e.g. \cite{Brezzi}; \S II, Prop. 2.11])
and \cite{Gatica}, page 102).

Note that since $\mbox{div}\,\mathbf q_h|_K\in P^k(K),$ $K\in \mathcal T_h,$ then a natural choice for the approximation of the
variable $\varphi$ is to use piecewise polynomials of degree at most $k$ leading to the
space $V_2^h$
defined above. Due to Proposition 3.9 of \cite{Brezzi}, p. 132, it follows that the sequences of the subspaces $\{V_1^h\}$ and $\{V_2^h\}$
are complete in
$H(\rm div;D)$ and $L^2(D),$ respectively, in the following sense.
\begin{predll}
{\it Let} V {\it be a Hilbert space.
Introduce a sequence of finite-dimensional subspaces} $V^h$ {\it in} $V$,
{\it defined by an infinite set of parameters} $h_1,h_2,\dots$ {\it with} $\lim_{k\to \infty}h_k=0$.

{\it We say that sequence} $\{V^h\}$ {\it is complete in} $V$,
{\it if for any} $v\in V$ {\it and} $\epsilon>0$ {\it there exists an }$\hat h=\hat h(v,\epsilon)>0$ {\it such that}
 $\inf_{w\in V^h}\|v-w\|_H<\varepsilon$ {\it for any} $h<\hat h$.
{\it In other words, the completeness of sequence} $\{V^h\}$ {\it means that any element} $v\in V$
{\it may be approximated with any degree of accuracy by elements of}
 $\{V^h\}$.
\end{predll}
Completeness of $\{V_1^h\}$ and $\{V_2^h\}$ in $H(\rm div;D)$ and $L^2(D)$ together with estimate \eqref{m7nn} imply that
\begin{equation}\label{m7nnn}
\lim_{h\to 0}\bigl (\|\mathbf j-\mathbf j^h\|_{H(\mbox{\rm\small div},D)}+\|\varphi-\varphi^h\|_{L^2(D)}\bigr)=0.
\end{equation}

Now we are in a position to give the following definition.

Take an approximate guaranteed estimate of $l(\mathbf j,\varphi)$ as
\begin{equation}\label{apprest}
\widehat {l^h(\mathbf j,\varphi)}=(u_1^{h},y_1)_{H_1}+(u_2^{h},y_2)_{H_1}+c^{h},
\end{equation}
where $u_1^{h}=\tilde Q_1C_1\mathbf
{p}_1^{h},$ $u_2^{h}=\tilde Q_2C_2 p_2^{h},$ $c^{h}=\int_{D}\hat
z_2^{h}(x)f_0(x)\,dx,$
and functions $\mathbf {\hat z}_1^{h},\mathbf {p}_1^{h}\in V_1^{h}$ and $\hat z_2^{h} ,p_2^{h}\in V_2^{h}$
are determined from the following uniquely solvable system of variational equalities
\begin{multline} \label{r13at1hgg}
\int_{D}(\mathbf A^{-1}(x)\,\hat{\mathbf z}_1^{h}(x),\mathbf
q_1^{h}(x))_{\mathbb R^n}dx+ \int_{D}\hat z_2^{h}(x)\mbox{\rm
div\,}\mathbf q_1^{h}(x)\,dx
\\=\int_{D}(\mathbf l_1(x)-C_1^tJ_{H_1}\tilde Q_1C_1\mathbf
{p}_1^{h}(x),\mathbf q_1^{h}(x))_{\mathbb R^n}\,dx \quad\forall\mathbf
q_1^{h}\in V_1^{h},
\end{multline}
\begin{equation} \label{r14at1hgg}
\int_{D}v_1^{h}(x)\mbox{\rm div\,}\hat{\mathbf z}_1^{h}(x)\,dx
=\int_{D}(l_2(x)-C_2^tJ_{H_2}\tilde Q_2C_2
p_2^{h}(x))v_1^{h}(x)\,dx\quad\forall v_1^{h}\in V_2^{h},
\end{equation}
\begin{equation}\label{r16at1hgg}
\int_{D}(\mathbf A^{-1}(x)\,\mathbf {p}_1^{h}(x),\mathbf q_2^{h}(x))_{\mathbb
R^n}dx+ \int_{D}p_2^{h}(x)\mbox{\rm div\,}\mathbf q_2^{h}(x)\,dx=0
 \quad
\forall \mathbf q_2^{h}\in V_1^{h},
\end{equation}
\begin{equation} \label{r15aat1hgg}
\int_{D}v_2^{h}(x)\mbox{\rm div\,}\mathbf p_1^{h}(x)\,dx
=\int_{D}v_2^{h}(x)Q^{-1}\hat z_2^{h}(x)\,dx \quad \forall v_2^{h}\in
V_2^{h}.
\end{equation}



The unique solvability of system \eqref{r13at1hgg}--\eqref{r15aat1hgg} follows from the same reasoning of the previous
sections which led to the proof of Theorem 1 with $H(\mbox{\rm div},D)$ and $L^2(D)$ being replaced by $V_1^h$ and $V_2^h$,
respectively.


\begin{pred}
Let $\hat{\mathbf z}_1, \mathbf p_1\in H(\mbox{\rm div},D),$ $\hat{z}_2, p_2\in L^2(D)$ and
$\hat{\mathbf z}_1^{h}, \mathbf p_1^{h}\in V_1^{h},$ $\hat{z}_2^{h}, p_2^{h}\in V_2^{h}$ be solutions
of problems \eqref{r13at1}--\eqref{r15at1} and \eqref{r13at1hgg}--\eqref{r15aat1hgg}, respectively.

Then the following hold:
\begin{equation} \label{r15aat1hggk}
\|\hat{\mathbf z}_1-
\hat{\mathbf z}_1^{h}\|_{H(\mbox{\rm \small div},D)}
+\|\hat z_2-\hat{z}_2^{h}\|_{L^2(D)}\to 0 \quad\mbox{\rm as}\quad h\to 0,
\end{equation}

i)
\begin{equation} \label{r15aat1hggk'}
\|\mathbf p_1-
\mathbf p_1^{h}\|_{H(\mbox{\rm \small div},D)}
+\|p_2-p_2^{h}\|_{L^2(D)}\to 0 \quad\mbox{\rm as}\quad h\to 0.
\end{equation}

ii) Approximate guaranteed estimate $\widehat {l^{h}(\mathbf j,\varphi)}$ of $l(\mathbf j,\varphi)$ tends to a guaranteed estimate
$\widehat{\widehat {l(\mathbf j,\varphi)}}$ of this expression as $h\to 0$ in the sense that
$$
\lim_{h\to 0}\mathbb E|\widehat {l^{h}(\mathbf j,\varphi)}-\widehat{\widehat {l(\mathbf j,\varphi)}}|^2=0.
$$

Moreover,
\begin{equation} \label{r15aat1hggk''}
\lim_{h\to 0}\sup_{\tilde f\in
G_0, (\tilde \eta_1,\tilde \eta_2)\in G_1}\mathbb E|\widehat {l^{h}(\tilde{\mathbf j},\tilde\varphi)}-\widehat{\widehat {l(\tilde{\mathbf j},\tilde\varphi)}}|^2=0,
\end{equation}
and
\begin{equation} \label{r15aat1hggk'''}
\lim_{h\to 0}\sup_{\tilde f\in
G_0, (\tilde \eta_1,\tilde \eta_2)\in G_1}\mathbb E|\widehat {l^{h}(\tilde{\mathbf j},\tilde\varphi)}-l(\tilde{\mathbf j},\tilde\varphi)|^2=
\sup_{\tilde f\in
G_0, (\tilde \eta_1,\tilde \eta_2)\in G_1}\mathbb E|\widehat{\widehat {l(\tilde{\mathbf j},\tilde\varphi)}}-l(\tilde{\mathbf j},\tilde\varphi))|^2,
\end{equation}
where $\tilde f,$
$\tilde {\mathbf j},$ and $\tilde \varphi$  have the same sence as in the definition 1,
$\widehat {l^h(\tilde {\mathbf j},\tilde \varphi)}=(u_1^{h},\tilde y_1)_{H_1}+(u_2^{h},\tilde y_2)_{H_1}+c^{h},$ $\tilde y_1=C_1\tilde {\mathbf j}+\tilde \eta_1,$ $\tilde y_2=C_2\tilde \varphi+\tilde \eta_2$.
\end{pred}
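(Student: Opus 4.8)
The plan is to exploit the optimal-control reduction of Lemma 1: both the continuous quadruple $(\hat{\mathbf z}_1,\hat z_2,\mathbf p_1,p_2)$ and its discrete counterpart $(\hat{\mathbf z}_1^h,\hat z_2^h,\mathbf p_1^h,p_2^h)$ are optimality systems for a strictly convex coercive quadratic functional. Indeed, by the argument proving Theorem 1, $(\hat{\mathbf z}_1,\hat z_2)$ and $(\mathbf p_1,p_2)$ are the state and adjoint pairs generated by the unique minimizer $\hat u=(\tilde Q_1C_1\mathbf p_1,\tilde Q_2C_2p_2)$ of $I(u)$ in \eqref{m20}, while the same reasoning applied with $H(\mbox{div};D)$, $L^2(D)$ replaced by $V_1^h$, $V_2^h$ shows that $(\hat{\mathbf z}_1^h,\hat z_2^h,\mathbf p_1^h,p_2^h)$ is generated by the minimizer $\hat u^h$ of the functional $I^h$ obtained from $I$ on replacing the state map $u\mapsto z_2(\cdot;u)$ by its mixed finite element approximation $u\mapsto z_2^h(\cdot;u)$ defined through \eqref{113}, \eqref{115} posed on $V_1^h\times V_2^h$.

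First I would prove part (i). For each fixed $u\in H$ the pair $(\mathbf z_1(\cdot;u),z_2(\cdot;u))$ solves the mixed problem \eqref{113}, \eqref{115}, so its finite element solution obeys the quasi-optimality estimate \eqref{m7nn}; combined with the completeness of $\{V_1^h\}$, $\{V_2^h\}$ that yields \eqref{m7nnn} this gives convergence of the discrete state operator, while \eqref{m7nnl} shows the discrete operators are bounded uniformly in $h$, so that $I^h\to I$ uniformly on bounded subsets of $H$. Since the penalty term satisfies $(\tilde Q_1^{-1}u_1,u_1)_{H_1}+(\tilde Q_2^{-1}u_2,u_2)_{H_2}\ge\alpha\|u\|_H^2$ with the $h$-independent constant $\alpha$ of \eqref{posdefme}, the minimizers $\hat u^h$ stay bounded, every weak limit point minimizes $I$ and hence equals $\hat u$ by uniqueness, and $I(\hat u^h)\to I(\hat u)$; the coercivity of the quadratic form $\pi$ in \eqref{77} then upgrades weak to strong convergence $\hat u^h\to\hat u$ in $H$. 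Feeding the convergent controls into the discrete state and adjoint equations and invoking \eqref{m7nn}, \eqref{m7nnn} once more yields \eqref{r15aat1hggk} and \eqref{r15aat1hggk'}. In particular $\mathbf p_1^h\to\mathbf p_1$, $p_2^h\to p_2$ and $\hat z_2^h\to\hat z_2$, whence $u_1^h=\tilde Q_1C_1\mathbf p_1^h\to\hat u_1$, $u_2^h=\tilde Q_2C_2p_2^h\to\hat u_2$ in $H_1$, $H_2$, and $c^h\to\hat c$.

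For part (ii) I would subtract the two estimates,
\begin{equation*}
\widehat{l^h(\tilde{\mathbf j},\tilde\varphi)}-\widehat{\widehat{l(\tilde{\mathbf j},\tilde\varphi)}}=(\tilde y_1,u_1^h-\hat u_1)_{H_1}+(\tilde y_2,u_2^h-\hat u_2)_{H_2}+(c^h-\hat c),
\end{equation*}
and observe that $u_i^h-\hat u_i$ and $c^h-\hat c$ are deterministic and independent of $(\tilde f,\tilde\eta)$, because the adjoint system \eqref{r13at1}--\eqref{r15at1} and its discretization involve only $\mathbf l_1$, $l_2$. The Cauchy--Bunyakovsky inequality in $L^2(\Omega)$ then gives $\mathbb E|\widehat{l^h(\tilde{\mathbf j},\tilde\varphi)}-\widehat{\widehat{l(\tilde{\mathbf j},\tilde\varphi)}}|^2\le C\bigl[\bigl(\|u_1^h-\hat u_1\|_{H_1}^2+\|u_2^h-\hat u_2\|_{H_2}^2\bigr)\max_i\mathbb E\|\tilde y_i\|_{H_i}^2+|c^h-\hat c|^2\bigr]$. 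Here $\mathbb E\|\tilde y_i\|_{H_i}^2$ is bounded uniformly over $G_0\times G_1$: the a priori estimate for problem \eqref{eqsystem1}, \eqref{eqsystem2} and the boundedness of $G_0$ control $\|C_i\tilde{\mathbf j}\|_{H_i}$, while $(\tilde Q_i v,v)_{H_i}\ge\|\tilde Q_i^{-1}\|^{-1}\|v\|_{H_i}^2$ together with \eqref{restr2} bounds $\mathbb E\|\tilde\eta_i\|_{H_i}^2$. By part (i) the right-hand side tends to $0$, proving both the plain limit (the special case of the actual observations $y_i$, whose second moments are finite) and the uniform limit \eqref{r15aat1hggk''}. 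Finally, \eqref{r15aat1hggk'''} follows from \eqref{r15aat1hggk''} by the triangle inequality in $L^2(\Omega)$: writing $\|\cdot\|_*:=(\mathbb E|\cdot|^2)^{1/2}$, for each $(\tilde f,\tilde\eta)$ one has $\bigl|\,\|\widehat{l^h}-l\|_*-\|\widehat{\widehat l}-l\|_*\,\bigr|\le\|\widehat{l^h}-\widehat{\widehat l}\|_*$, so that taking suprema over $G_0\times G_1$ and squaring yields the asserted equality of worst-case mean-square errors.

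I expect the main obstacle to be the strong, rather than merely weak, convergence in part (i). Since no elliptic regularity of the solutions is assumed, a duality (Aubin--Nitsche) argument is unavailable, and I must instead rely on the uniform-in-$h$ well-posedness of the discrete coupled optimality system. This uniform stability is inherited from the $h$-independent coercivity constant $\alpha$ of the control penalty \eqref{posdefme}, from the uniform discrete coercivity of $a$ on $\mbox{Ker}\,B|_{V_1^h}$, and from the uniform discrete inf--sup condition for $b$; the coupling operators $C_1^tJ_{H_1}\tilde Q_1C_1$, $C_2^tJ_{H_2}\tilde Q_2C_2$ and $Q^{-1}$ enter only as bounded, sign-definite perturbations and do not destroy it. Once uniform stability and the uniform convergence of the discrete state operator on bounded sets of controls are secured, the remaining steps are routine.
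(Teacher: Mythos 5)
Your part (ii) is essentially the paper's own argument and is sound: the differences $u_i^{h}-\hat u_i$ and $c^{h}-\hat c$ are deterministic, the second moments of $\tilde y_1,\tilde y_2$ are bounded uniformly over $G_0\times G_1$ by the a priori estimate for \eqref{eqsystem1}, \eqref{eqsystem2} together with \eqref{restr2} and the positive definiteness of $\tilde Q_1,\tilde Q_2$, and \eqref{r15aat1hggk'''} follows from \eqref{r15aat1hggk''} by the reverse triangle inequality in the norm $(\mathbb E|\cdot|^2)^{1/2}$ — exactly as in the paper. The defect is in part (i), on which all of this rests. Your assertion that \eqref{m7nn}, \eqref{m7nnn} and \eqref{m7nnl} give ``$I^h\to I$ uniformly on bounded subsets of $H$'' is unjustified and in this generality false: strong (pointwise in $u$) convergence of the discrete solution operators together with their uniform boundedness yields uniform convergence only on \emph{compact} subsets of $H$, and bounded sets are not compact here, no compactness of $C_1$, $C_2$, $Q^{-1}$ or of the solution map being assumed. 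Concretely, quasi-optimality bounds $\|z_2(\cdot;u)-z_2^{h}(\cdot;u)\|_{L^2(D)}$ by the best-approximation error of the pair $(\mathbf z_1(\cdot;u),z_2(\cdot;u))$, and as $u$ ranges over a bounded set these pairs form a bounded, in general non-precompact, subset of $H(\mbox{\rm div},D)\times L^2(D)$, whose best-approximation errors need not tend to zero uniformly; the same failure occurs along a merely weakly convergent sequence of controls. Since your claims that every weak limit point of $\{\hat u^{h}\}$ minimizes $I$ and that $I(\hat u^{h})\to I(\hat u)$ both require comparing $I^{h}(\hat u^{h})$ with $I(\hat u^{h})$ at the moving argument $\hat u^{h}$, part (i) — and hence \eqref{r15aat1hggk}, \eqref{r15aat1hggk'} and the input to part (ii) — is unproved as written.

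The paper closes precisely this hole by a different mechanism. After extracting a subsequence $u^{h_{n_k}}$ converging weakly to $\tilde u$, it does not compare $I_{n_k}$ with $I$ at $u^{h_{n_k}}$; instead it passes to the limit directly in the discrete variational equations \eqref{113h''}, \eqref{115h''}, pairing the weakly convergent discrete states with test functions in $V_1^{h}\times V_2^{h}$ chosen to converge \emph{strongly} to arbitrary $(\mathbf q,v)$, thereby proving that the discrete optimal states converge weakly to the exact solution $(\mathbf z_1(\tilde u),z_2(\tilde u))$ of \eqref{113}, \eqref{115}; weak lower semicontinuity of the two quadratic forms then gives $I(\tilde u)\le\underline{\lim}_{k}I_{n_k}(u^{h_{n_k}})\le I(\hat u)$ (see \eqref{m203hrrrr}), whence $\tilde u=\hat u$ by uniqueness of the minimizer. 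Moreover, the upgrade from weak to strong convergence of $u^{h_n}$ is not obtained through the coercivity identity for the form $\pi$ of \eqref{77} that you propose — which again presupposes $I(\hat u^{h})\to I(\hat u)$ — but by showing separately, via \eqref{contr0} and the contradiction argument leading to \eqref{contr}, that $(\tilde Q^{-1}u^{h_n},u^{h_n})_H\to(\tilde Q^{-1}\hat u,\hat u)_H$, and then invoking weak convergence plus convergence of norms in the inner product $(\tilde Q^{-1}\cdot,\cdot)_H$, which is equivalent to that of $H$ by \eqref{posdefme}. If you replace your uniform-convergence step by this limit passage in the equations, the rest of your outline (the triangle-inequality splitting through the intermediate discrete solutions at $\hat u$, controlled by the uniform stability estimates \eqref{m7nn2}, \eqref{m7nnl}) goes through and coincides with the paper's proof.
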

\begin{proof}
Denote by $\{h_n\}$ any sequence of positive numbers such that $h_n\to 0$ when $n\to\infty$.
Let $\mathbf z_1^{h_n}(\cdot;u)\in V_1^{h_n},$ i $z_2^{h_n}(\cdot;u)\in V_2^{h_n}$ be a solution of the problem
\begin{multline}\label{113h}
\int_{D}(((\mathbf A(x))^{-1})^T\mathbf z_1^{h_n}(x;u),\mathbf
q^{h_n}(x))_{\mathbb R^n}dx- \int_{D}z_2^{h_n}(x;u)\mbox{div\,}\mathbf
q^{h_n}(x)\,dx=\\ \int_{D}(\mathbf l_1(x)-(C_1^tJ_{H_1}u_1)(x),\mathbf q^{h_n}(x))_{\mathbb
R^n}dx\quad \forall \mathbf q^{h_n}\in V_1^{h_n},
\end{multline}
\begin{multline}\label{115h}
-\int_{D}v^{h_n}(x)\mbox{div\,}\mathbf
z_1^{h_n}(x;u)\,dx-\int_{D}c(x)z_2^{h_n}(x;u)v^{h_n}(x)\,dx\\=\int_{D}(l_2(x)-(C_2^tJ_{H_2}u_2)(x))v^{h_n}(x)\,dx\quad
\forall v^{h_n}\in V_2^{h_n}.
\end{multline}
Then
\begin{equation}\label{hatzz}
\mathbf {\hat z}_1^{h_n}(x)=\mathbf {z}_1^{h_n}(x;u^{h_n}),\quad \hat z_2^{h_n}(x)=z_2^{h_n}(x;u^{h_n}).
\end{equation}
Problem \eqref{113h}, \eqref{115h} can be rewritten as
\begin{equation*} \label{eqnn2l}
a^*(\mathbf j^h,\mathbf q^h)+b(\mathbf q^h,\varphi^h)=0\quad \forall \mathbf q^h \in V_1^h,
\end{equation*}
\begin{equation*} \label{eqnn2'l}
b(\mathbf j^h,v^h)- c(\varphi^h,v^h)=(f,v^h)_{L^2(D)}\quad\forall v^h\in V_2^h,
\end{equation*}
where
$$a^*(\mathbf j^h,\mathbf q^h)=a(\mathbf q^h,\mathbf j^h)=\int_{D}(((\mathbf A(x))^{-1})^T\mathbf j^{h}(x),\mathbf
q^{h}(x))_{\mathbb R^n}dx$$ and the bilinear forms $a(\cdot,\cdot),$ $b(\cdot,\cdot),$ and $c(\cdot,\cdot)$ are defined by \eqref{a}, \eqref{b}, and \eqref{c}
respectively.

Since the bilinear form $a(\mathbf j^h,\mathbf q^h)$ is uniformly coercive on $\mbox{Ker}\, B|_{V_1^h}$ then the form
is also uniformly coercive on $\mbox{Ker}\, B|_{V_1^h}$ with the same constant  and hence
system \eqref{113h}, \eqref{115h} is uniquely solvable.
Theorem 1.2, Prop. 2.11 in \S 2 from \cite{Brezzi} (see also \cite{Gatica}, page 102),
and uniform coerciveness of the form $a^*(\mathbf j^h,\mathbf q^h)$ on $\mbox{Ker}\, B|_{V_1^h}$
imply that the following estimates are valid
\begin{multline}\label{m7nn'}
\|\mathbf z_1(\cdot;u)-\mathbf z_1^{h_n}(\cdot;u)\|_{H(\mbox{\rm\small div},D)}+\|z_2(\cdot;u)-z_2^{h_n}(\cdot;u)\|_{L^2(D)}\\ \leq \tilde c\left(\inf_{\mathbf q^{h_n}\in V_1^{h_n}}
\|\mathbf z_1(\cdot;u)-\mathbf q^{h_n}\|_{H(\mbox{\rm\small div},D)}+\inf_{v^{h_n}\in V_2^h}\|z_2(\cdot;u)-v^{h_n}\|_{L^2(D)}\right),
\end{multline}

\begin{multline}\label{m7nn2}
\|\mathbf z_1^{h_n}(\cdot;u)\|_{H(\mbox{\rm\small div},D)}+\|z_2^{h_n}(\cdot;u)\|_{L^2(D)}\\ \leq \tilde {\tilde c}\left(
\|\mathbf l_1-C_1^tJ_{H_1}u_1\|_{L^2(D)^n}+\|l_2-C_2^tJ_{H_2}u_2\|_{L^2(D)}\right),
\end{multline}
where
$\tilde c,\tilde {\tilde c}$ are constants not depending on $h$
and $(\mathbf z_1(\cdot;u),z_2(\cdot;u))$ is a solution of system of variational equations
\eqref{113}, \eqref{115}.

From estimate \eqref{m7nn'} and completeness of $\{V_1^h\}$ and $\{V_2^h\}$ in $H(\rm div;D)$ and $L^2(D),$  it follows that
\begin{equation}\label{115hh}
\|\mathbf z_1(\cdot;u)-\mathbf z_1^{h_n}(\cdot;u)\|_{H(\mbox{\rm\small div},D)}+\|z_2(\cdot;u)-z_2^{h_n}(\cdot;u)\|_{L^2(D)}\to 0
\end{equation}
as $n\to \infty.$

Prove now that
$$
\lim_{n\to\infty}\|u^{h_n}-\hat u\|_H=\lim_{n\to\infty}\left(\|u_1^{h_n}-\hat u_1\|_{H_1}^2+\|u_2^{h_n}-\hat u_2\|_{H_2}^2\right)^{1/2}=0,
$$
where $u^{h_n}=(u_1^{h_n},u_2^{h_n}),$ $\hat u=(\hat u_1,\hat u_2),$ $H=H_1\times H_2.$

Set
\begin{equation*}\label{m20h}
I_n(u)=(Q^{-1}z_2^{h_n}(\cdot;u),z_2^{h_n}(\cdot;u))_{L^2(D)}
+ (\tilde Q_1^{-1}u_1,u_1)_{H_1}+(\tilde Q_2^{-1}u_2,u_2)_{H_2}.
\end{equation*}
It is clear that
$$
\inf_{u\in H}I_n(u)=I_n(u^{h_n})
$$
and
$$
I_n(u^{h_n})\leq I_n(\hat u).
$$
From strong convergence of the sequence $\{(\mathbf z_1^{h_n}(\cdot;\hat u),z_2^{h_n}(\cdot;\hat u))\}$
to $(\mathbf z_1(\hat u),z_2(\hat u))$ in the space $H(\mbox{\rm div},D)\times L^2(D),$
which follows from \eqref{115hh}, we have
%
$$
\lim_{n\to\infty}I_n(\hat u)=I(\hat u),
$$
and, hence $\overline{\lim}_{n\to\infty}I_n(u^{h_n})\leq I(\hat u).$
Since
$$
I_n(u^{h_n})\geq (\tilde Q_1^{-1}u_1^{h_n},u_1^{h_n})_{H_1}+(\tilde Q_2^{-1}u_2^{h_n},u_2^{h_n})_{H_2}
\geq \alpha\|u^{h_n}\|_H^2,
$$
where $\alpha >0$ is the constant from \eqref{posdefme}, then $\|u^{h_n}\|_H\leq C\,\,(C={\rm const)}$ and we can extract from the sequence $\{u^{h_n}\}$ a subsequence $\{u^{h_{n_k}}\}$ such that $u^{h_{n_k}}\to\tilde u$
weakly in $H$ (see \cite{Yosida}, Theorem 1, p. 180).

Prove that the sequence $\{(\mathbf z_1^{h_{n_k}}(\cdot;u^{h_{n_k}}),z_2^{h_{n_k}}(\cdot;u^{h_{n_k}}))\}$ weakly converges to $(\mathbf z_1(\tilde  u),z_2(\tilde u))$ in $H(\mbox{\rm div},D)\times L^2(D).$

In fact,
take a subsequence
$\{(\mathbf z_1^{h_{n_{k_i}}}(\cdot;u^{h_{n_{k_i}}}),z_2^{h_{n_{k_i}}}(\cdot;u^{h_{n_{k_i}}}))\}$ of the sequence
$\{(\mathbf z_1^{h_{n_k}}(\cdot;u^{h_{n_k}}),z_2^{h_{n_k}}(\cdot;u^{h_{n_k}}))\}$
which weakly converges to some $(\tilde {\mathbf z}_1,\tilde z_2)$ in $H(\mbox{\rm div},D)\times L^2(D)$
and
for an arbitrary $(\mathbf q,v)$ from $H(\mbox{\rm div},D)\times L^2(D)$
take a sequence $\{(\mathbf q^{h_{n_{k_i}}},v^{h_{n_{k_i}}})\}, (\mathbf q^{h_{n_{k_i}}},v^{h_{n_{k_i}}})\in V_1^{h_{n_{k_i}}}\times V_2^{h_{n_{k_i}}}$
which strongly converges to $(\mathbf q,v)$ in $H(\mbox{\rm div},D)\times L^2(D)$
\footnote{Such sequences exist due to the boundedness of the sequence $\{(\mathbf z_1^{h_{n_k}}(\cdot;u^{h_{n_k}}),z_2^{h_{n_k}}(\cdot;u^{h_{n_k}}))\}$
in the space $H(\rm div;D)\times L^2(D)$, which follows from inequality \eqref{m7nn2} and the boundedness of the sequence $\{u^{h_{n_k}}\}$ in the space $H$, and from completeness of the sequence of the subspaces $\{V_1^h\times V_2^h\}$
in
$H(\rm div;D)\times L^2(D).$}
and
pass to the limit in both sides of equations
\begin{multline}\label{113h''}
\int_{D}(((\mathbf A(x))^{-1})^T\mathbf z_1^{h_{n_{k_i}}}(x;u^{h_{n_{k_i}}}),\mathbf
q^{h_{n_{k_i}}}(x))_{\mathbb R^n}dx- \int_{D}z_2^{h_{n_{k_i}}}(x;u^{h_{n_{k_i}}})\mbox{div\,}\mathbf
q^{h_{n_{k_i}}}(x)\,dx\\= \int_{D}(\mathbf l_1(x)-(C_1^tJ_{H_1}u_1^{h_{n_{k_i}}})(x),\mathbf q^{h_{n_{k_i}}}(x))_{\mathbb
R^n}dx,
\end{multline}
\begin{multline}\label{115h''}
-\int_{D}v^{h_{n_{k_i}}}(x)\mbox{div\,}\mathbf
z_1^{h_{n_{k_i}}}(x;u^{h_{n_{k_i}}})\,dx-\int_{D}c(x)z_2^{h_{n_{k_i}}}(x;u^{h_{n_{k_i}}})v^{h_{n_{k_i}}}(x)\,dx\\=\int_{D}(l_2(x)-(C_2^tJ_{H_2}u_2^{h_{n_{k_i}}})(x))
v^{h_{n_{k_i}}}(x)\,dx
\end{multline}
(which follows from \eqref{113h}, \eqref{115h}), when $i\to\infty$.
Taking into account that
\footnote{Passage to the limit in \eqref{m200h}--\eqref{m203h} is justified by the following assertion (see, for example \cite{Badr}, page 12):

{\it Let a sequence} $\{v_n\}$ {\it weakly converge to} $v_0$ {\it in some linear normed space} $X$ {\it and a sequence} $\{F_n\}$ {\it strongly converge to}
$F_0$ {\it in the space} $X',$ {\it dual of}  $X.$ {\it Then}
$$
\lim_{n\to\infty}<F_n,u_n>_{X'\times X}=<F_0,u_0>_{X'\times X}.
$$  }
$$
\lim_{i\to\infty}\Bigl(\int_{D}(((\mathbf A(x))^{-1})^T\mathbf z_1^{h_{n_{k_i}}}(x;u^{h_{n_{k_i}}}),\mathbf
q^{h_{n_{k_i}}}(x))_{\mathbb R^n}dx- \int_{D}z_2^{h_{n_{k_i}}}(x;u^{h_{n_{k_i}}})\mbox{div\,}\mathbf
q^{h_{n_{k_i}}}(x)\,dx\Bigr)
$$
$$
=\lim_{i\to\infty}a(\mathbf q^{h_{n_{k_i}}},\mathbf z_1^{h_{n_{k_i}}}(\cdot;u^{h_{n_{k_i}}}))+\lim_{i\to\infty}b(\mathbf q^{h_{n_{k_i}}},z_2^{h_{n_{k_i}}}(\cdot;u^{h_{n_{k_i}}}))$$
$$
=\lim_{i\to\infty}<A\mathbf q_1^{h_{n_{k_i}}},\mathbf z_1^{h_{n_{k_i}}}(\cdot;u^{h_{n_{k_i}}})>_{H(\mbox{\rm\small div},D)'\times H(\mbox{\rm\small div},D)}
$$
$$
+\lim_{i\to\infty}<B\mathbf q^{h_{n_{k_i}}},z_2^{h_{n_{k_i}}}(\cdot;u^{h_{n_{k_i}}})>_{L^2(D)'\times L^2(D)}
$$
$$
=<A\mathbf q,\tilde{\mathbf z}_1>_{H(\mbox{\rm\small div},D)'\times H(\mbox{\rm\small div},D)}+<B\mathbf q,\tilde{z}_2>_{L^2(D)'\times L^2(D)}
=a(\mathbf q,\tilde{\mathbf z}_1)+b(\mathbf q,\tilde z_2)
$$
\begin{equation}\label{m200h}
=\int_{D}(((\mathbf A(x))^{-1})^T\tilde {\mathbf z}_1(x),\mathbf
q(x))_{\mathbb R^n}dx- \int_{D}\tilde z_2(x)\mbox{div\,}\mathbf
q(x)\,dx,
\end{equation}
where by $A:H(\mbox{\rm div},D)\to H(\mbox{\rm div},D)'$ we denote the bounded operator associated with the bilinear
form $a(\cdot,\cdot),$ defined by $a(u,v)=<Au,v> $ $\forall u,v \in H(\mbox{\rm div},D)$,
$$
-\lim_{i\to\infty}\int_{D}v^{h_{n_{k_i}}}(x)\mbox{div\,}\mathbf
z_1^{h_{n_{k_i}}}(x;u^{h_{n_{k_i}}})\,dx=\lim_{i\to\infty}b(\mathbf z_1^{h_{n_{k_i}}}(\cdot;u^{h_{n_{k_i}}}),v^{h_{n_{k_i}}})
$$
$$
=\lim_{i\to\infty}<B^tv^{h_{n_{k_i}}},\mathbf z_1^{h_{n_{k_i}}}(\cdot;u^{h_{n_{k_i}}}))>_{H(\mbox{\rm\small div},D)'\times H(\mbox{\rm\small div},D)}
$$
\begin{equation}\label{m201h}
=b(\tilde{\mathbf z}_1,v)=-\int_{D}v(x)\mbox{div\,}\tilde{\mathbf z}_1(x)\,dx,
\end{equation}
$$
\lim_{i\to\infty}\int_{D}c(x)z_2^{h_{n_{k_i}}}(x;u^{h_{n_{k_i}}})v^{h_{n_{k_i}}}(x)\,dx=\lim_{i\to\infty}(z_2^{h_{n_{k_i}}}(\cdot;u^{h_{n_{k_i}}}),cv^{h_{n_{k_i}}})_{L^2(D)}
$$
\begin{equation}\label{m2011h}
=(\tilde z_2,cv)_{L^2(D)}=\int_{D}c(x)\tilde z_2(x)v(x)\,dx,
\end{equation}
$$
\lim_{i\to\infty}\int_{D}(\mathbf l_1(x)-(C_1^tJ_{H_1}u_1^{h_{n_{k_i}}})(x),\mathbf q^{h_{n_{k_i}}}(x))_{\mathbb
R^n}dx
$$
$$
=\lim_{i\to\infty}\Bigl((\mathbf l_1,\mathbf q^{h_{n_{k_i}}})_{L^2(D)^n}
-<J_{H_1}C_1\mathbf q^{h_{n_{k_i}}},u_1^{h_{n_{k_i}}}>_{H_1'\times H_1} \Bigr)
$$
\begin{equation}\label{m202h}
=(\mathbf l_1,\mathbf q)_{L^2(D)^n}
-<J_{H_1}C_1\mathbf q,\tilde u_1>
_{H_1'\times H_1}
=\int_{D}(\mathbf l_1(x)-(C_1^tJ_{H_1}\tilde u_1)(x),\mathbf q(x))_{\mathbb
R^n}dx,
\end{equation}
\begin{multline}\label{m203h}
\lim_{i\to\infty}\int_{D}(l_2(x)-(C_2^tJ_{H_2}u_2^{h_{n_{k_i}}})(x))v^{h_{n_{k_i}}}(x)\,dx\\=\int_{D}(l_2(x)-(C_2^tJ_{H_2}\tilde u_2)(x))v(x)\,dx,
\end{multline}
we see, from \eqref{113h''} -- \eqref{m203h}, that $(\tilde{\mathbf z}_1,\tilde z_2)\in H(\mbox{div,\,}D)\times L^2(D)$
satisfy equations \eqref{113} and \eqref{115} at $u=\tilde u$. But problem \eqref{113}, \eqref{115} has a unique solution $(\mathbf z_1(\tilde  u),z_2(\tilde u))$ at
$u=\tilde u.$ Hence $(\tilde{\mathbf z}_1,\tilde z_2)=(\mathbf z_1(\tilde  u),z_2(\tilde u))$ and
\begin{equation*}\label{m203hrr}
(\mathbf z_1^{h_{n_k}}(\cdot;u^{h_{n_k}}),z_2^{h_{n_k}}(\cdot;u^{h_{n_k}}))\to (\mathbf z_1(\tilde  u),z_2(\tilde u))\quad \mbox{weakly in}\quad H(\mbox{\rm div},D)\times L^2(D).
\end{equation*}
Then, since the functionals $F_1(z_2):=(Q^{-1}z_2,z_2)_{L^2(D)}$ and $F_2(u):=(\tilde Q^{-1}u,u)_{H}:=(\tilde Q_1^{-1}u_1,u_1)_{H_1}+(\tilde Q_2^{-1}u_2,u_2)_{H_2}$ are weakly lower
semicontinuous in the spaces $L^2(D)$ and $H$, respectively,\footnote{These assertions are the corollary of a more general statement
(that can be found, for example, in \cite{Badr}, p. 41):
{\it Let} $X$ {\it be a reflexive Banach space, and} $B:X \to X^*$ {\it a linear bounded nonnegative selfadjoint operator. Then the functional} $F(u):=<Bu,u>_{X^*\times X}$
 {\it is a weakly lower
semicontinuous on} $X$.}
we obtain
$$
I(\tilde u)=(Q^{-1}z_2(\cdot;\tilde u),z_2(\cdot;\tilde u))_{L^2(D)}
+ (\tilde Q_1\tilde u,\tilde u)_{H}
$$
$$
\leq \underline{\lim}_{k\to\infty}(Q^{-1}z_2^{h_{n_k}}(\cdot;u^{h_{n_k}}),z_2^{h_{n_k}}(\cdot;u^{h_{n_k}}))_{L^2(D)}
+\underline{\lim}_{k\to\infty}(\tilde Q^{-1}u^{h_{n_k}},u^{h_{n_k}})_H
$$
$$
\leq \underline{\lim}_{k\to\infty} \Bigl[(Q^{-1}z_2^{h_{n_k}}(\cdot;u^{h_{n_k}}),z_2^{h_{n_k}}(\cdot;u^{h_{n_k}}))_{L^2(D)}
+(\tilde Q^{-1}u^{h_{n_k}},u^{h_{n_k}})_H\Bigr]
$$
\begin{equation}\label{m203hrrrr}
=\underline{\lim}_{k\to\infty}I_{n_k}(u^{h_{n_k}})\leq \overline{\lim}_{k\to\infty}I_{n_k}(u^{h_{n_k}})
\leq I(\hat u).
\end{equation}
Here $\tilde Q^{-1}: H\to H$ is the bounded selfadjoint positive definite operator defined by
$$
\tilde Q^{-1}u=\tilde Q^{-1}_1u_1+\tilde Q^{-1}_2u_2,\quad u=(u_1,u_2)\in H=H_1\times H_2,
$$
satisfying the inequality
\begin{equation}\label{poz}
(\tilde Q^{-1}u,u)_H\geq\alpha\|u\|_H^2\quad\forall u\in H,
\end{equation}
where $\alpha$ is a constant from \eqref{posdefme}.
Taking into account the uniqueness of an element on which the minimum of functional $I(u)$ is attained,
we find from \eqref{m203hrrrr} that $\tilde u=\hat u.$
This implies that
\begin{equation}\label{m204h'}
\lim_{n\to\infty}I_n(u^{h_n})= I(\hat u)
\end{equation}
and
$
u^{h_n}\xrightarrow{\mbox{\scriptsize weakly}}\hat u$ in $H,$ $\hat z_2^{h_n}=z_2^{h_n}(\cdot;u^{h_n})\xrightarrow{\mbox{\scriptsize weakly}} z_2(\cdot;\hat u)=\hat z_2$ in  $L^2(D)$ as $n\to\infty.$
Hence,
\begin{equation}\label{m204h}
(Q^{-1}z_2(\cdot;\hat u),z_2(\cdot;\hat u))_{L^2(D)}\leq \underline{\lim}_{n\to\infty}(Q^{-1}z_2^{h_n}(\cdot;u^{h_n}),z_2^{h_n}(\cdot;u^{h_n}))_{L^2(D)},
\end{equation}
\begin{equation}\label{m205h}
 (\tilde Q_1\hat u,\hat u)_{H}\leq \underline{\lim}_{n\to\infty}
(\tilde Q^{-1}u^{h_n},u^{h_n})_H
\end{equation}
and from \eqref{m204h}, \eqref{m205h}, we have
$$
\underline{\lim}_{n\to\infty}(Q^{-1}z_2^{h_n}(\cdot;u^{h_n}),z_2^{h_n}(\cdot;u^{h_n}))_{L^2(D)}+\underline{\lim}_{n\to\infty}
(\tilde Q^{-1}u^{h_n},u^{h_n})_H
$$
$$
\geq (Q^{-1}z_2(\cdot;\hat u),z_2(\cdot;\hat u))_{L^2(D)}+(\tilde Q_1\hat u,\hat u)_{H}= I(\hat u)
$$
$$
=\lim_{n\to\infty}\Bigl[Q^{-1}z_2^{h_n}(\cdot;u^{h_n}),z_2^{h_n}(\cdot;u^{h_n}))_{L^2(D)}+(\tilde Q^{-1}u^{h_n},u^{h_n})_H \Bigr]
$$
$$
=\overline{\lim}_{n\to\infty}\Bigl[Q^{-1}z_2^{h_n}(\cdot;u^{h_n}),z_2^{h_n}(\cdot;u^{h_n}))_{L^2(D)}+(\tilde Q^{-1}u^{h_n},u^{h_n})_H \Bigr]
$$
$$
\geq\underline{\lim}_{n\to\infty}Q^{-1}z_2^{h_n}(\cdot;u^{h_n}),z_2^{h_n}(\cdot;u^{h_n}))_{L^2(D)}+\overline{\lim}_{n\to\infty}(\tilde Q^{-1}u^{h_n},u^{h_n})_H.
$$
Whence
$$
\underline{\lim}_{n\to\infty}
(\tilde Q^{-1}u^{h_n},u^{h_n})_H\geq \overline{\lim}_{n\to\infty}(\tilde Q^{-1}u^{h_n},u^{h_n})_H.
$$
The last inequality shows that the sequence $\{(\tilde Q^{-1}u^{h_n},u^{h_n})_H\}$ is convergent. This fact and \eqref{m204h'} also imply convergence of the sequence $\{(Q^{-1}z_2^{h_n}(\cdot;u^{h_n}),z_2^{h_n}(\cdot;u^{h_n}))_{L^2(D)}\}$
and equality
\begin{equation}\label{contr0}
I(\hat u)=\lim_{n\to\infty}(Q^{-1}z_2^{h_n}(\cdot;u^{h_n}),z_2^{h_n}(\cdot;u^{h_n}))_{L^2(D)}+\lim_{n\to\infty}(\tilde Q^{-1}u^{h_n},u^{h_n})_H.
\end{equation}
It is easy to see that
\begin{equation}\label{contr}
\lim_{n\to\infty}(\tilde Q^{-1}u^{h_n},u^{h_n})_H=(\tilde Q^{-1}\hat u,\hat u)_H.
\end{equation}
In fact, if we suppose that \eqref{contr} does not hold, i.e.
\begin{equation*}\label{contr1}
\lim_{n\to\infty}(\tilde Q^{-1}u^{h_n},u^{h_n})_H=(\tilde Q^{-1}\hat u,\hat u)_H+a,
\end{equation*}
where $a$ is a certain positive number, then (due to \eqref{contr0})
there must be valid
\begin{multline}\label{contr2}
\underline{\lim}_{n\to\infty}(Q^{-1}z_2^{h_n}(\cdot;u^{h_n}),z_2^{h_n}(\cdot;u^{h_n}))_{L^2(D)}\\=\lim_{n\to\infty}(Q^{-1}z_2^{h_n}(\cdot;u^{h_n}),z_2^{h_n}(\cdot;u^{h_n}))_{L^2(D)}
=(Q^{-1}z_2(\cdot;\hat u,z_2(\cdot;\hat u))_{L^2(D)}-a.
\end{multline}
But this is impossible since \eqref{contr2} leads to the contradictory inequality
$$
\underline{\lim}_{n\to\infty}(Q^{-1}z_2^{h_n}(\cdot;u^{h_n}),z_2^{h_n}(\cdot;u^{h_n}))_{L^2(D)}<(Q^{-1}z_2(\cdot;\hat u,z_2(\cdot;\hat u))_{L^2(D)}.
$$
Hence, \eqref{contr} is proved.

Now let us show that $u^{h_n} \to \hat u$ strongly in $H.$  To this end introduce
Hilbert space $\tilde H$ consisting of elements of $H$ endowed with norm
$$
\|v\|_{\tilde H}:=(\tilde Q^{-1}v,v)_H^{1/2}.
$$
Then from weak convergence of the sequence $\{u^{h_n}\}$ to $\hat u$ as $n\to \infty$, it follows, obviously, that
\begin{equation} \label{tildd}
u^{h_n} \to \hat u\,\,\mbox{weakly in}\,\, \tilde H\,\, \mbox{as}\,\, n\to \infty.
\end{equation}
Since \eqref{contr} means that
\begin{equation} \label{contrr}
\|u^{h_n}\|_{\tilde H}\to \|\hat u\|_{\tilde H}\,\, \mbox{as}\,\, n\to \infty,
\end{equation}
we obtain from \eqref{tildd} and \eqref{contrr} that
$u^{h_n} \to \hat u$ strongly in $\tilde H$ i.e.,\footnote{{Here we use the following statement (see, for example \cite{Yosida}, p. 124).
\it Let} $\{f_n\}$ {\it be a sequence in Hilbert space} $X$. {\it If} $f_n \to f$ {\it weakly in} $X$ {\it and}
$\|f_n\|_X\to \|f\|_X$ {\it as} $n\to \infty$ {\it then} $f_n \to f$ {\it strongly in} $X.$ }
\begin{equation*} \label{tild}
\lim_{n\to\infty}\|u^{h_n}-\hat u\|_{\tilde H}=\lim_{n\to\infty}(\tilde Q^{-1}(u^{h_n}-\hat u),u^{h_n}-\hat u)_H^{1/2}=0.
\end{equation*}
From here, due to the inequality
$$
\|u^{h_n}-\hat u\|_{H}\leq \frac{1}{\alpha}(\tilde Q^{-1}(u^{h_n}-\hat u),u^{h_n}-\hat u)_H^{1/2},
$$
following from \eqref{poz}, we find
that
$$
\lim_{n\to\infty}\|u^{h_n}-\hat u\|_{H}=0,
$$
i.e. the sequence $\{u^{h_n}\}$ strongly converges to $\hat u$ in $H.$

In order to get estimate \eqref{r15aat1hggk}, we note that
$$
(\mathbf z_1^{h_n}(\cdot;\hat u)-\mathbf z_1^{h_n}(\cdot;u^{h_n}),z_2^{h_n}(\cdot;\hat u)-z_2^{h_n}(\cdot; u^{h_n}))
$$
is a solution of the following problem
\begin{multline}\label{1133hhh}
\int_{D}(((\mathbf A(x))^{-1})^T(\mathbf z_1^{h_n}(x;\hat u)-\mathbf z_1^{h_n}(x;u^{h_n})),\mathbf
q^{h_n}(x))_{\mathbb R^n}dx \\- \int_{D}(z_2^{h_n}(x;\hat u)-z_2^{h_n}(x; u^{h_n}))\mbox{div\,}\mathbf
q^{h_n}(x)\,dx\\=\int_{D}((C_1^tJ_{H_1}(u_1^{h_n}-\hat u_1))(x),\mathbf q^{h_n}(x))_{\mathbb
R^n}dx\quad \forall \mathbf q^{h_n}\in V_1^{h_n},
\end{multline}
\begin{multline}\label{1153hhh}
-\int_{D}v^{h_n}(x)\mbox{div\,}(\mathbf
z_1^{h_n}(x;\hat u)-\mathbf z_1^{h_n}(x;u^{h_n}))\,dx\\-\int_{D}c(x)(z_2^{h_n}(x;\hat u)-z_2^{h_n}(x; u^{h_n}))v^{h_n}(x)\,dx\\=\int_{D}(C_2^tJ_{H_2}(u_2^{h_n}-\hat u_2))(x)v^{h_n}(x)\,dx\quad
\forall v^{h_n}\in V_2^{h_n}.
\end{multline}
Applying estimate \eqref{m7nn2} to the solution of problem \eqref{1133hhh}, \eqref{1153hhh}, we obtain
\begin{multline} \label{r15aat1hggkk}
\|\mathbf z_1^{h_n}(\cdot;\hat u)-\mathbf z_1^{h_n}(\cdot;u^{h_n})\|_{H(\mbox{\rm \small div},D)}
\\+\|z_2^{h_n}(\cdot;\hat u)-z_2^{h_n}(\cdot; u^{h_n})\|_{L^2(D)}\leq C \|u_1^{h_n}-\hat u_1\|_H.
\end{multline}
From triangle inequality, \eqref{hatzz}, \eqref{r15aat1hggkk}, and the fact that the sequence $\{(\mathbf z_1^{h_n}(\cdot;\hat u),z_2^{h_n}(\cdot;\hat u))\}$
strongly converges to $(\mathbf z_1(\hat u),z_2(\hat u))$ in the space $H(\mbox{\rm div},D)\times L^2(D),$ we have
$$
\|\hat{\mathbf z}_1-
\hat{\mathbf z}_1^{h_n}\|_{H(\mbox{\rm \small div},D)}
+\|\hat z_2-\hat{z}_2^{h_n}\|_{L^2(D)}
$$
$$
\leq\|\mathbf z_1(\cdot;\hat u)-\mathbf z_1^{h_n}(\cdot;\hat u)\|_{H(\mbox{\rm \small div},D)}
+\|z_2(\cdot;\hat u)-z_2^{h_n}(\cdot; \hat u)\|_{L^2(D)}
$$
\begin{multline} \label{r15aat1hggkkr}
+\|\mathbf z_1^{h_n}(\cdot;\hat u)-\mathbf z_1^{h_n}(\cdot;u^{h_n})\|_{H(\mbox{\rm \small div},D)}
\\+\|z_2^{h_n}(\cdot;\hat u)-z_2^{h_n}(\cdot; u^{h_n})\|_{L^2(D)}\to 0 \quad\mbox{\rm as}\quad n\to \infty.
\end{multline}
Analogously, in order to obtain estimate \eqref{r15aat1hggk'}, we note that
$$
\|\mathbf p_1-
\mathbf p_1^{h_n}\|_{H(\mbox{\rm \small div},D)}
+\|p_2-p_2^{h_n}\|_{L^2(D)}
$$
$$
\leq\|\mathbf p_1-\mathbf p_1^{h_n}(\cdot;\hat u)\|_{H(\mbox{\rm \small div},D)}
+\|p_2-p_2^{h_n}(\cdot; \hat u)\|_{L^2(D)}
$$
\begin{equation} \label{jjhjj}
+\|\mathbf p_1^{h_n}(\cdot;\hat u)-\mathbf p_1^{h_n}\|_{H(\mbox{\rm \small div},D)}
+\|p_2^{h_n}(\cdot;\hat u)-p_2^{h_n}\|_{L^2(D)},
\end{equation}
where
$(\mathbf p_1^{h_n}(\cdot;\hat u),p_2^{h_n}(\cdot; \hat u))$ is a solution of the problem
\begin{multline}\label{r16at1hggv}
\int_{D}((\mathbf A(x))^{-1}\mathbf {p}_1^{h_n}(x;\hat u),\mathbf q_2^{h_n}(x))_{\mathbb
R^n}dx\\- \int_{D}p_2^{h_n}(x;\hat u)\mbox{\rm div\,}\mathbf q_2^{h}(x)\,dx=0
 \quad
\forall \mathbf q_2^{h_n}\in V_1^{h_n},
\end{multline}
\begin{multline} \label{r15aat1hggv}
-\int_{D}v_2^{h_n}(x)\mbox{\rm div\,}\mathbf p_1^{h_n}(x;\hat u)\,dx
-\int_{D}c(x)p_2^{h_n}(x;\hat u)v_2^{h_n}(x)\,dx
\\=\int_{D}v_2^{h_n}(x)Q^{-1}z_2(x;\hat u)\,dx \quad v_2^{h_n}\in V_2^{h_n}.
\end{multline}
Taking into account that, due to \eqref{r16at1hgg},\eqref{r15aat1hgg} and \eqref{r16at1hggv}, \eqref{r15aat1hggv},
$$
(\mathbf p_1^{h_n}(\cdot;\hat u)-\mathbf p_1^{h_n},p_2^{h_n}(\cdot;\hat u)-p_2^{h_n})
$$
is a solution of the following problem
\begin{multline}\label{1133hhhv}
\int_{D}((\mathbf A(x))^{-1}(\mathbf p_1^{h_n}(x;\hat u)-\mathbf p_1^{h_n}(x)),\mathbf
q_2^{h_n}(x))_{\mathbb R^n}dx\\- \int_{D}(p_2^{h_n}(x;\hat u)-p_2^{h_n}(x))\mbox{div\,}\mathbf
q_2^{h_n}(x)\,dx=0\quad \forall \mathbf q_2^{h_n}\in V_1^{h_n},
\end{multline}
\begin{multline}\label{1153hhhv}
-\int_{D}v_2^{h_n}(x)\mbox{div\,}(\mathbf
p_1^{h_n}(x;\hat u)-\mathbf p_1^{h_n}(x))\,dx-\int_{D}c(x)(p_2^{h_n}(x;\hat u)-p_2^{h_n}(x))v_2^{h_n}(x)\,dx\\
=\int_{D}v_2^{h_n}(x)Q^{-1}(z_2(\cdot;\hat u)-z_2^{h_n}(\cdot;u^{h_n}))(x)\,dx \quad v_2^{h_n}\in V_2^{h_n},
\end{multline}
and applying relationship \eqref{m7nnn} to the solution of problem \eqref{r16at1hggv}, \eqref{r15aat1hggv}
and estimate \eqref{m7nnl} to the solution of problem \eqref{1133hhhv}, \eqref{1153hhhv}, respectively, we obtain, in view of \eqref{r15aat1hggkkr}, that
\begin{equation} \label{r15aat1hggkkb}
\|\mathbf p_1-\mathbf p_1^{h_n}(\cdot;\hat u)\|_{H(\mbox{\rm \small div},D)}
+\|p_2-p_2^{h_n}(\cdot; \hat u)\|_{L^2(D)}\to 0 \quad\mbox{\rm as}\quad n\to \infty,
\end{equation}
\begin{multline} \label{r15aat1hggkks}
\|\mathbf p_1^{h_n}(\cdot;\hat u)-\mathbf p_1^{h_n})\|_{H(\mbox{\rm \small div},D)}
+\|p_2^{h_n}(\cdot;\hat u)-p_2^{h_n})\|_{L^2(D)}\\ \leq C \|z_2(\cdot;\hat u)-\hat z_2^{h_n}\|_{L^2(D)}
\to 0 \quad\mbox{\rm as}\quad n\to \infty.
\end{multline}
From \eqref{r15aat1hggkks}, \eqref{r15aat1hggkkb}, and
\eqref{jjhjj},
we find
\begin{equation} \label{jjjf}
\|\mathbf p_1-
\mathbf p_1^{h_n}\|_{H(\mbox{\rm \small div},D)}
+\|p_2-p_2^{h_n}\|_{L^2(D)}\to 0 \quad\mbox{\rm as}\quad n\to \infty.
\end{equation}
Relationships \eqref{jjjf} and \eqref{r15aat1hggkkr} mean that \eqref{r15aat1hggk} and \eqref{r15aat1hggk'} are proved.

Now show the validity of \eqref{r15aat1hggk''} and \eqref{r15aat1hggk'''}.

Let $(\tilde{\mathbf j},\tilde\varphi)$ be a solution of problem \eqref{generalized},
\eqref{generalized1} at $f(x)=\tilde f(x).$ Then from \eqref{apprest} and \eqref{exactest},
we have
$$
\mathbb E|\widehat {l^{h_n}(\tilde {\mathbf j},\tilde \varphi)}-\widehat{\widehat {l(\tilde {\mathbf j},\tilde \varphi)}}|^2
=\mathbb E[(u_1^{h_n},\tilde y_1)_{H_1}+(u_2^{h_n},\tilde y_2)_{H_1}+c^{h_n}-(\hat u_1,\tilde y_1)_{H_1}-(\hat u_2,\tilde y_2)_{H_2}-\hat c]^2
$$
$$
=\mathbb E[(u_1^{h_n}-\hat u_1,\tilde y_1)_{H_1}+(u_2^{h_n}-\hat u_2,\tilde y_2)_{H_2}+c^{h_n}- \hat c]^2
$$
$$
=[(u_1^{h_n}-\hat u_1,C_1\tilde {\mathbf j})_{H_1}+(u_2^{h_n}-\hat u_2,C_2\tilde \varphi)_{H_2}+c^{h_n}- \hat c]^2
$$
\begin{equation}\label{lhm2}
+\mathbb E[(u_1^{h_n}-\hat u_1,\tilde \eta_1)_{H_1}+(u_2^{h_n}-\hat u_2,\tilde \eta_2)_{H_2}].
\end{equation}
Weak convergence of the sequence $\{\hat z_2^{h_n}\}$  to $\hat z_2$ in the space $L^2(D)$ implies that $c^{h_n}\to \hat c$ as $n\to \infty.$ Then from the fact that $\tilde f\in G_0$
and the inequality
$$
[(u_1^{h_n}-\hat u_1,C_1\tilde {\mathbf j})_{H_1}+(u_2^{h_n}-\hat u_2,C_2\tilde \varphi)_{H_2}+c^{h_n}- \hat c]^2
$$
$$
\leq C \left(\|u_1^{h_n}-\hat u_1\|_{H_1}^2+\|u_2^{h_n}-\hat u_2\|_{H_2}^2+(c^{h_n}- \hat c)^2 \right)\left(\|\tilde {\mathbf j}\|^2_{H(\mbox{\rm \small div},D)}
+\|\tilde\varphi\|^2_{L^2(D)}\right)
$$
$$
\leq\tilde  C\left(\|u^{h_n}-\hat u\|_{H}^2+(c^{h_n}- \hat c)^2 \right)\|\tilde f\|_{L^2(D)}^2
$$
$$
\leq\tilde{\tilde  C}\left(\|u^{h_n}-\hat u\|_{H}^2+(c^{h_n}- \hat c)^2 \right)\quad (C,\tilde C,\tilde{\tilde C}=\mbox{const}),
$$
we see that the fist term in the r.h.s of \eqref{lhm2} tends to $0$
as $n\to\infty.$
Analogously, we may show that for the last term
in the r.h.s of \eqref{lhm2} the following estimate is valid
$$
\mathbb E[(u_1^{h_n}-\hat u_1,\tilde \eta_1)_{H_1}+(u_2^{h_n}-\hat u_2,\tilde \eta_2)_{H_2}]
\leq C\|u^{h_n}-\hat u\|_{H}^2 \quad (C=\mbox{const})
$$
and therefore this term also tends to $0$ as $n\to\infty.$
From here and the inequality
$$
\mathbb E|\widehat {l^{h_n}(\tilde {\mathbf j},\tilde \varphi)}-l(\tilde {\mathbf j},\tilde \varphi)|^{1/2}
=\mathbb E[\widehat {l^{h_n}(\tilde {\mathbf j},\tilde \varphi)}-\widehat{\widehat {l(\tilde {\mathbf j},\tilde \varphi)}}
+\widehat{\widehat {l(\mathbf j,\varphi)}}
-l(\tilde {\mathbf j},\tilde \varphi)|^{1/2}
$$
$$
\leq \left\{\mathbb E|\widehat {l^{h_n}(\tilde {\mathbf j},\tilde \varphi)}-\widehat{\widehat {l(\tilde {\mathbf j},\tilde \varphi)}}|^2\right\}^{1/2}
+\left\{\mathbb E[
\widehat{\widehat {l(\tilde {\mathbf j},\tilde \varphi)}}
-l(\tilde {\mathbf j},\tilde \varphi)]^2|\right\}^{1/2},
$$
it follows the validity of the conclusion of the theorem.
\end{proof}

Let us formulate a similar result in the case when an estimate $(\hat{\mathbf j},\hat\varphi)$ of the state $(\mathbf j,\varphi)$
is directly determined from the solution to problem
\eqref{13aaag}--\eqref{18aaagq}.
\begin{pred}
Let $(\hat{\mathbf j}^{h},\hat\varphi^{h})\in V_1^{h}\times V_2^{h}$ be an approximate estimate of
$(\hat{\mathbf j},\hat\varphi)$
determined from the solution to the variational problem
\begin{multline} \label{13aaagg}
\int_{D}(((\mathbf A(x))^{-1})^T\hat{\mathbf p}_1^{h}(x),\mathbf
q_1^{h}(x))_{\mathbb R^n}dx- \int_{D}\hat p_2^{h}(x)\mbox{\rm
div\,}\mathbf q_1^{h}(x)\,dx\\ =\int_{D}(C_1^tJ_{H_1}\tilde
Q_1(y_1(x)-C_1\hat{\mathbf
j}^{h}),\mathbf q_1^{h}(x))_{\mathbb R^n}dx
\,\,\forall\mathbf q_1^{h}\in V_1^{h},
\end{multline}
\begin{multline} \label{p14aaagg}
-\int_{D}v_1^{h}(x)\mbox{\rm div\,}\hat{\mathbf p}_1^{h}(x)\,dx
-\int_{D}c(x)\hat p_2^{h}(x)v_1^{h}(x)\,dx
\\=\int_D(C_2^tJ_{H_2}\tilde
Q_2(y_2(x)-C_2\hat
\varphi^{h}(x))v_1^{h}(x)\,dx\quad\forall v_1^{h}\in V_2^{h},
\end{multline}
\begin{equation}\label{pr16at1gg}
\int_{D}((\mathbf A(x))^{-1}\hat{\mathbf j}^{h}(x),\mathbf
q_2^{h}(x))_{\mathbb R^n}\,dx-\int_{D}\hat\varphi^{h}(x)\mbox{\rm
div\,}\mathbf q_2^{h}(x)\,dx=0\quad
\forall \mathbf q_2^{h}\in V_1^{h},
\end{equation}
\begin{multline} \label{18aaagg}
-\int_{D}v_2^{h}(x)\mbox{\rm div\,}\hat{\mathbf j}^{h}(x)\,dx
-\int_{D}c(x)\hat\varphi^{h}(x)v_2^{h}(x)\,dx
\\=\int_{D}v_2(x)(Q^{-1}\hat p_2^{h}(x)+f_0(x))\,dx \quad
\forall v_2^{h}\in V_2^{h}.
\end{multline}
Then
$$
\|\hat{\mathbf j}-
\hat{\mathbf j}^{h}\|_{H(\mbox{\rm \small div},D)}
+\|\hat\varphi-\hat{\varphi}^{h}\|_{L^2(D)}\to 0 \quad\mbox{\rm as}\quad h\to 0
$$
and
$$
\|\hat{\mathbf p}_1-\hat{\mathbf p}_1^{h}\|_{H(\mbox{\rm \small div},D)}
+\|\hat p_2-\hat p_2^{h}\|_{L^2(D)}\to 0 \quad\mbox{\rm as}\quad h\to 0.
$$
\end{pred}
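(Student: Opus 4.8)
The plan is to exploit the fact that, for each fixed realization of the observations $y_1\in H_1$ and $y_2\in H_2$ (which lie in these spaces with probability $1$, since $\eta_i\in L^2(\Omega,H_i)$ and the $C_i$ are bounded), the coupled system \eqref{13aaag}--\eqref{18aaagq} and its discretization \eqref{13aaagg}--\eqref{18aaagg} have exactly the same algebraic structure as the system \eqref{r13at1}--\eqref{r15at1} and its discretization \eqref{r13at1hgg}--\eqref{r15aat1hgg} treated in the preceding theorem. Concretely, I would set up the correspondence of unknowns $(\hat{\mathbf p}_1,\hat p_2)\leftrightarrow(\hat{\mathbf z}_1,\hat z_2)$ and $(\hat{\mathbf j},\hat\varphi)\leftrightarrow(\mathbf p_1,p_2)$: in both systems the ``adjoint'' pair is driven through the operators $C_i^tJ_{H_i}\tilde Q_iC_i$ by the ``state'' pair, while the ``state'' pair is driven through $Q^{-1}$ by the ``adjoint'' pair. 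The only differences are that the deterministic data $\mathbf l_1,l_2$ are now replaced by the (realization-wise fixed) elements $C_1^tJ_{H_1}\tilde Q_1y_1$ and $C_2^tJ_{H_2}\tilde Q_2y_2$ of $L^2(D)^n$ and $L^2(D)$, and that an additional forcing built from $f_0$ enters the second state equation \eqref{18aaagq}. Since none of these data affect the bilinear forms, the coercivity and inf--sup properties underlying unique solvability and the $h$-uniform a priori estimates \eqref{m7nn2}, \eqref{m7nnl} are unchanged.

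Consequently, I would repeat, realization by realization, the argument of the preceding theorem. First recast \eqref{13aaag}--\eqref{18aaagq} as the optimality system of the optimal control problem \eqref{13aaa1tg}--\eqref{p14aaa1tg} with its quadratic cost $I(u)$, now read off for a fixed realization (the expectations dropped, the minimization taken over $H$ rather than $L^2(\Omega,H)$); coercivity of $I$ follows from \eqref{posdefme}. Introduce the discrete controls $u^{h_n}$ minimizing the discrete cost $I_n(u)$ built from the Galerkin solutions of the parametrized primal problem, and establish in turn: the bound $\overline{\lim}_{n}I_n(u^{h_n})\le I(\hat u)$; boundedness of $\{u^{h_n}\}$ in $H$ via \eqref{poz}, hence extraction of a weakly convergent subsequence; identification of its weak limit with $\hat u$ through weak lower semicontinuity of the two quadratic terms and uniqueness of the minimizer; and finally the upgrade to strong convergence $u^{h_n}\to\hat u$ in $H$ using the equivalent norm $\|v\|_{\tilde H}=(\tilde Q^{-1}v,v)_H^{1/2}$ together with the fact that weak convergence plus convergence of norms yields strong convergence. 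These are precisely the steps leading to the proof of the preceding theorem, transferred verbatim under the role identification above.

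Having secured $u^{h_n}\to\hat u$ strongly in $H$, I would deduce the two stated convergences exactly as \eqref{r15aat1hggk} and \eqref{r15aat1hggk'} were obtained: apply the $h$-uniform estimate \eqref{m7nn2} to the difference problems satisfied by the analogues of \eqref{1133hhh}--\eqref{1153hhh} and \eqref{1133hhhv}--\eqref{1153hhhv}, and combine, via the triangle inequality, with the Galerkin convergence \eqref{m7nn'}, \eqref{m7nnn} of the parametrized problems at the fixed limiting control $\hat u$. The pair playing the role of $(\mathbf p_1,p_2)$ gives $\|\hat{\mathbf j}-\hat{\mathbf j}^h\|_{H(\mathrm{div},D)}+\|\hat\varphi-\hat\varphi^h\|_{L^2(D)}\to0$, and the pair playing the role of $(\hat{\mathbf z}_1,\hat z_2)$ gives $\|\hat{\mathbf p}_1-\hat{\mathbf p}_1^h\|_{H(\mathrm{div},D)}+\|\hat p_2-\hat p_2^h\|_{L^2(D)}\to0$.

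The one genuinely new ingredient, and the step I expect to require the most care, is the handling of the randomness. Everything above is carried out for a fixed realization, so it yields the convergences with probability $1$; to read them in the $L^2(\Omega,\cdot)$ sense (as the random fields of the preceding representation theorem naturally live in $L^2(\Omega,H(\mathrm{div},D))$ and $L^2(\Omega,L^2(D))$) one must verify that the constants in \eqref{m7nn2}, \eqref{m7nnl} depend neither on $h$ nor on the realization, so that the realization-wise errors are dominated by an integrable majorant built from $\|y_1\|_{H_1}$ and $\|y_2\|_{H_2}$; the desired $L^2(\Omega)$ convergence then follows by dominated convergence. This reduction is the crux: once it is in place, the deterministic skeleton is identical to that of the preceding theorem.
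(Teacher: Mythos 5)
Your proposal is correct and coincides with the paper's intended argument: the authors state this theorem without proof, presenting it as ``a similar result'' to the preceding convergence theorem, and your role identification $(\hat{\mathbf p}_1,\hat p_2)\leftrightarrow(\hat{\mathbf z}_1,\hat z_2)$, $(\hat{\mathbf j},\hat\varphi)\leftrightarrow(\mathbf p_1,p_2)$, with $C_1^tJ_{H_1}\tilde Q_1y_1$ and $C_2^tJ_{H_2}\tilde Q_2y_2$ replacing $\mathbf l_1,l_2$ and the extra $f_0$-forcing, is exactly the transfer they rely on (it is the same correspondence used in their proof of the representation theorem for $(\hat{\mathbf j},\hat\varphi)$ via the optimal control problem with data $\mathbf d_1,d_2$). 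Your realization-wise reduction plus the dominated-convergence upgrade to $L^2(\Omega,\cdot)$ merely supplies the stochastic bookkeeping the paper leaves implicit, so it is a faithful completion of the omitted proof rather than a different route.
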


Introducing the bases in the spaces $V_1^h$ and $V_2^h$, problem \eqref{r13at1hgg}--\eqref{r15aat1hgg} can be rewritten as a system of
liner algebraic equations.  To do this, let us denote the elements of the bases of $V_1^h$ and $V_2^h$ by $\boldsymbol \xi_i$ $(i=1,\dots,n_1)$ and $\eta_i$ $(i=1,\dots,n_2),$
respectively,
where $n_1=\mbox{\rm dim\,}V_1^h,$ $n_2=\mbox{\rm dim\,}V_2^h.$ The fact that $\hat{\mathbf z}_1^{h},$ $\mathbf p_1^{h}$
and $\hat z_2^{h},$ $p_2^{h}$ belong to the spaces $V_1^h$ and $V_2^h$ means the existence of constants $\hat{z}_i^{(1)},$ $p_i^{(1)}$ and $\hat z_i^{(2)},$ $p_i^{(2)}$ such that
\begin{equation}\label{elr}
\hat{\mathbf z}_1^{h}=\sum_{j=1}^{n_1}\hat{z}_j^{(1)}\boldsymbol \xi_j,\quad \mathbf p_1^{h}=\sum_{j=1}^{n_1}p_j^{(1)}\boldsymbol \xi_j
\end{equation}
and
\begin{equation}\label{elr1}
\hat z_2^h=\sum_{j=1}^{n_2}\hat z_j^{(2)}\eta_j,\quad p_2^{h}=\sum_{j=1}^{n_2}p_j^{(2)}\eta_j.
\end{equation}
Setting in \eqref{r13at1hgg}, \eqref{r16at1hgg} $\mathbf q_1^{h}=\mathbf q_2^{h}=\boldsymbol \xi_i$
$(i=1,\dots,n_1)$ and in \eqref{r14at1hgg}, \eqref{r15aat1hgg}
$v_1^{h}=v_2^{h}=\eta_i$ $(i=1,\dots,n_2)$ respectively,
we obtain  that finding $\hat{\mathbf z}_1^{h},$ $\mathbf p_1^{h},$ $z_2^h$ and $p_2^{h}$ from \eqref{r13at1hgg}--\eqref{r15aat1hgg}
is equivalent to solving the following system of linear algebraic equations with respect to coefficients
$\hat{z}_j^{(1)},$ $p_j^{(1)},$ $\hat z_j^{(2)},$ and $p_j^{(2)}$ of expansions \eqref{elr}, \eqref{elr1}:
\begin{equation} \label{r13at1hggtttt}
\sum_{j=1}^{n_1}\bar a^{(1)}_{ij}\hat{z}_j^{(1)}
+ \sum_{j=1}^{n_2}a^{(2)}_{ji}\hat z_j^{(2)}
+\sum_{j=1}^{n_1}a^{(3)}_{ij}p_j^{(1)}=b^{(1)}_{i},\quad i=1,\dots,n_1,
\end{equation}
\begin{equation} \label{r14at1hggtttt}
\sum_{j=1}^{n_1}a^{(2)}_{ij}\hat{z}_j^{(1)}+\sum_{j=1}^{n_2}a^{(6)}_{ij}\hat z_j^{(2)}
+\sum_{j=1}^{n_2}a^{(4)}_{ij}p_j^{(2)}
=b^{(1)}_{i},\quad i=1,\dots,n_2,
\end{equation}
\begin{equation}\label{r16at1hggtttt}
\sum_{j=1}^{n_1}a^{(1)}_{ij}p_j^{(1)}+ \sum_{j=1}^{n_2}a^{(2)}_{ji}p_j^{(2)}=0,\quad i=1,\dots,n_1,
\end{equation}
\begin{equation} \label{r15aat1hggtttt}
\sum_{j=1}^{n_1}a^{(2)}_{ij}p_j^{(1)}+\sum_{j=1}^{n_2}a^{(6)}_{ij} p_j^{(2)}
+\sum_{j=1}^{n_2}a^{(5)}_{ij}\hat z_j^{(2)}=0,\quad i=1,\dots,n_2,
\end{equation}
where
$$\bar a^{(1)}_{ij}=\int_{D}(((\mathbf A(x))^{-1})^T
\boldsymbol \xi_i(x),
\boldsymbol \xi_j(x))_{\mathbb R^n}dx,\quad i,j=1,\dots,n_1,$$
$$a^{(1)}_{ij}=\int_{D}((\mathbf A(x))^{-1}
\boldsymbol \xi_i(x),
\boldsymbol \xi_j(x))_{\mathbb R^n}dx,\quad i,j=1,\dots,n_1,$$
$$a^{(2)}_{ij}=-\int_{D}\eta_i(x)\mbox{\rm
div\,}\boldsymbol \xi_j(x)\,dx,\quad i=1,\dots,n_2,\quad j=1,\dots,n_1,$$
$$a^{(3)}_{ij}=\int_{D}(C_1^tJ_{H_1}\tilde Q_1C_1
\boldsymbol \xi_i(x),\boldsymbol \xi_j(x))_{\mathbb R^n}\,dx,\quad i,j=1,\dots,n_1,$$
$$a^{(4)}_{ij}=\int_{D}C_2^tJ_{H_2}\tilde Q_2C_2
\eta_i(x)\eta_j(x)\,dx,\quad i,j=1,\dots,n_2,$$
$$a^{(5)}_{ij}=-\int_{D}\eta_j(x)Q^{-1}\eta_i(x)\,dx,\quad i,j=1,\dots,n_2,$$
$$a^{(6)}_{ij}=-\int_{D}c(x)\eta_i(x)\eta_j(x)\,dx,\quad i,j=1,\dots,n_2,$$
$$b^{(1)}_{i}=\int_{D}(\mathbf l_1(x),\boldsymbol \xi_i(x))_{\mathbb R^n}\,dx,\quad i=1,\dots,n_1,$$
$$b^{(2)}_{i}=\int_{D}l_2(x)\eta_i(x)\,dx,\quad i=1,\dots,n_2.$$

Analogous system of linear algebraic equations can be also obtained for problem
\eqref{13aaagg}--\eqref{18aaagg}.

\section{The case of integral observation operators}

As an example, we consider the case when  $H_1=L^2\bigl(D_1\n\bigr)^{n}\times\dots \times L^2\bigl(D_{i_1}\n\bigr)^{n}\times\dots\times L^2\bigl(D_{n_1}\n\bigr)^{n}$, $H_2=L^2\bigl(D_1\m\bigr)\times\dots\times L^2\bigl(D_{i_2}\m\bigr)\times\dots\times L^2\bigl(D_{n_2}\m\bigr)$. Then $J_{H_1}=I_{H_1}$, $J_{H_2}=I_{H_2}$,
where $I_{H_1}$ and $I_{H_2}$  are the identity operators in $H_1$ and $H_2$, respectively,
$$
y_1(x)=\Bigl(\mathbf y_1\n(x),\dots,\mathbf y\n_{i_1}(x),\dots,\mathbf y\n_{n_1}(x)\Bigr),
$$
\begin{equation*}\label{eta_1}
\eta_1(x)=\Bigl(\boldsymbol{\eta}\n_1(x),\dots,\boldsymbol{\eta}\n_{i_1}(x),\dots,\boldsymbol{\eta}\n_{n_1}(x)\Bigr),
\end{equation*}
where $\mathbf y\n_{i_1}(x)=(y\n_{i_1,1}(x),\ldots,y\n_{i_1,n}(x))^T\in
L^2\bigl(D_{i_1}\n\bigr)^{n}$, $\boldsymbol \eta\n_{i_1}(x)=(\eta\n_{i_1,1}(x),\ldots,\eta\n_{i_1,n}(x))^T
$  is a stochastic vector process with components
$\eta_{i_1,j}^{(1)}(x)$ $(j=1,\dots,n, i_1=1,\dots,n_1)$ that are stochastic processes with zero expectations and finite second moments,
$$
y_2(x)=\Bigl(y_1^{1}(x),\dots,y\m_{i_2}(x),\dots,y\m_{n_2}(x)\Bigr),
$$
\begin{equation}\label{eta_2}
\eta_2(x)=\Bigl(\eta\m_1(x),\dots,\eta\m_{i_2}(x),\dots,\eta\m_{n_2}(x)\Bigr),
\end{equation}
where $y\m_{i_2}\in L^2(D)$, $\eta\m_{i_2}(x)$ $(i_2=1,\dots,n_2)$  is a stochastic process with zero expectation and finite second moment.

Let in observations
(\ref{observ}) the operators $C_1:$ $L^2(D)^n\to H_1$ and $C_2:$ $L^2(D)\to H_2$ be defined by
$$
C_1\mathbf j(x)=\left(C_1\n\mathbf j(x), \ldots,C_{i_1}\n\mathbf j(x)\ldots,C_{n_1}\n\mathbf j(x)\right),
$$
$$
C_2 \varphi(x)=\left(C_1\m\varphi(x), \ldots,C_{i_2}\m\varphi(x)\ldots,C_{n_2}\m\varphi(x)\right),
$$
where $C_{i_1}\n:L^2(D)^{n}\to L^2(D_{i_1}\n)^{n}$ and $C_{i_2}\m:L^2(D)\to L^2(D_{i_2}\m)$ are integral operators defined by
$$C_{i_1}\n\mathbf j(x):=\int_{D_{i_1}\n}\mathbf K_{i_1}\n(x,\xi)\mathbf j(\xi)\,d\xi,$$
and
$$C_{i_2}\m\varphi(x):=\int_{D_{i_2}\m} K_{i_2}\m(x,\xi)\varphi(\xi)\,d\xi,
$$
correspondingly,
$\mathbf K_{i_1}\n(x,\xi)=\{k^{(i_1)}_{is}(x,\xi)\}_{i,j=1}^{n}$ is a matrix with entries $k^{(i_1)}_{is}\in L^2(D_{i_1}\n)\times L^2(D_{i_1}\n),$
$i_1=1,\dots,n_1,$ $K_{i_2}\m(x,\xi)\in L^2(D_{i_2}\m)\times L^2(D_{i_2}\m)$ is a given function, $i_2=1,\dots,n_2$.

As a result,  observations  $y_1$ and $y_2$ in (\ref{observ}) take the form
$$y_1=(\mathbf y_{1}\n(x),\ldots,\mathbf y_{i_1}\n(x),\ldots,\mathbf y_{n_1}\n(x)),$$
$$y_2=(y_{1}\m(x),\ldots,y_{i_2}\m(x),\ldots,\mathbf y_{n_2}\m(x)),$$
where
\begin{equation}\label{obint}
\mathbf y_{i_1}\n(x)=\int_{D_{i_1}\n}\mathbf K_{i_1}\n(x,\xi)
\mathbf j(\xi)\,d\xi+\boldsymbol{\eta}_{i_1}\n(x) ,\quad i_1=\overline{1,n_1},
\end{equation}
\begin{equation}\label{obint1}
y_{i_2}\m(x)=\int_{D_{i_2}\m}K_{i_2}\m(x,\xi)
\varphi(\xi)\,d\xi+\eta_{i_2}\m(x) ,\quad i_2=\overline{1,n_2},
\end{equation}
and  the operators
\begin{multline*}
\tilde Q_1\in \mathcal L\Bigl(L^2\bigl(D_1\n\bigr)^{n}\times\dots\times L^2\bigl(D_{i_1}\n\bigr)^{n}\times\dots\times L^2\bigl(D_{n_1}\n\bigr)^{n},\\
L^2\bigl(D_1\n\bigr)^{n}\times\dots\times L^2\bigl(D_{i_1}\n\bigr)^{n}\times\dots\times L^2\bigl(D_{n_1}\n\bigr)^{n}\Bigr)
\end{multline*}
and
\begin{multline*}
\tilde Q_2\in \mathcal L\Bigl(L^2\bigl(D_1\m\bigr)\times\dots\times L^2\bigl(D_{i_2}\m\bigr)\times\dots\times L^2\bigl(D_{n_2}\m\bigr),\\
L^2\bigl(D_1\m\bigr)\times\dots\times L^2\bigl(D_{i_2}\m\bigr)\times\dots\times L^2\bigl(D_{n_2}\m)\Bigr)
\end{multline*}
in (\ref{restr2}),
which is contained in the definition of set $G_1$,
are given by
$$
\tilde Q_1\tilde \eta_1=(\tilde {\mathbf Q}\n_{1}\tilde {\boldsymbol {\eta}}\n_{1},\dots,\tilde
{\mathbf Q}\n_{r_1}\tilde {\boldsymbol {\eta}}\n_{r_1},\dots,\tilde
{\mathbf Q}\n_{n_1}\tilde {\boldsymbol {\eta}}\n_{n_1})
$$
and
$$
\tilde Q_2\tilde \eta_2=(\tilde {Q}\m_{2}\tilde {\eta}\m_{2},\dots,\tilde
{Q}\m_{r_2}\tilde {\eta}\m_{r_2},\dots,\tilde
{Q}\m_{n_2}\tilde {\eta}\m_{n_2}),
$$
where $\tilde{\mathbf Q}\n_{r_1}(x)$
is a symmetric positive definite $n\times n$-matrix
with entries $\tilde q_{ij}^{(r_1)}\in C(\bar D_{r_1}\n)$,\footnote{Here and below we denote by $C(\bar D)$ a class
of functions continuous in the domain
$\bar D.$} $i,j=1,\ldots,n$, $\tilde {\boldsymbol {\eta}}\n_{r_1}\in L^2(\Omega, L^2(D_{r_1}\n)^{n}),$
 $r_1=1,\ldots,n_1,$ $\tilde
{Q}\m_{r_2}(x)$ is a continuous positive function
defined in the domain $\bar D_{r_2}\m$,
$\tilde {\eta}\m_{r_2}\in L^2(\Omega, L^2(D_{r_2}\m)),$ $r_2=1,\ldots,n_2.$

In this case condition (\ref{restr2}) takes the form\footnote{By
$$\mbox{Sp}(\tilde{\mathbf Q}_{r_1}\n(x)\tilde
{\mathbf R}_{r_1}\n(x,x))$$ we denote the trace of matrix
$\tilde{\mathbf Q}_{r_1}\n(x)\tilde
{\mathbf R}_{r_1}\n(x,x)$, i.e. the sum of diagonal elements of this matrix.}
\begin{equation*}\label{8ns}
\sum_{r_1=1}^{n_1}\int_{D_{r_1}\n}\mbox{Sp}(\tilde{\mathbf Q}_{r_1}\n(x)\tilde
{\mathbf R}_{r_1}\n(x,x))\,dx\leq 1,\quad
\sum_{r_2=1}^{n_2}\int_{D_{r_2}\m}\tilde Q_{r_2}\m(x)
\tilde R_{r_2}\m(x,x)\,dx\leq 1,
\end{equation*}
where by $\tilde{\mathbf R}_{r_1}\n(x,y)=[\tilde
b^{(r_1)}_{i,j}(x,y)]_{i,j=1}^{n}$ we denote the correlation matrix of vector process
$\tilde{\boldsymbol \eta}\n_{r_1}(x)=(\tilde\eta\n_{r_1,1}(x),\ldots,\tilde\eta\n_{r_1,n}(x))$
with components $$\tilde
b^{(r_1)}_{i,j}(x,y)=\mathbb
E\Bigl(\tilde\eta\n_{r_1,i}(x)\tilde\eta\n_{r_1,j}(y)\Bigr),\quad (x,y)\in D_{i_1}\n\times
D_{i_1}\n,$$
and by
$\tilde R_{r_2}\m(x,y)=\mathbb E\tilde
\eta_{r_2}\m(x) \tilde\eta_{r_2}\m(y)$ we denote the correlation function of process
$\tilde\eta_{r_2}\m(x),$
$(x,y)\in D_{r_2}\m\times
D_{r_2}\m.$

In fact,
$$
\mathbb E(\tilde Q_1\tilde \eta_1,\tilde \eta_1)_{H_1}= \sum_{r_1=1}^{n_1}\mathbb E(\tilde
{\mathbf Q}\n_{r_1}(x)\boldsymbol {\eta}\n_{r_1}(x),\boldsymbol {\eta}\n_{r_1}(x))_{L^2\bigl(D_{r_1}\n\bigr)^n}
$$
$$
=\sum_{r_1=1}^{n_1}\mathbb E\Bigl(\int_{D_{r_1}\n}\tilde
{\mathbf Q}\n_{r_1}(x)\boldsymbol {\eta}\n_{r_1}(x),\boldsymbol {\eta}\n_{r_1}(x))_{\mathbb R^n}dx \Bigr)
$$
$$
=\sum_{r_1=1}^{n_1}\sum_{i=1}^{n}\int_{D_{r_1}\n}\sum_{j=1}^{n}\mathbb E(\tilde q_{ij}^{(r_1)}(x)\eta\n_{j,r_1}(x)\eta\n_{i,r_1}(x))\,dx
$$
$$
=\sum_{r_1=1}^{n_1}\sum_{i=1}^{n}\int_{D_{r_1}\n}\sum_{j=1}^{n}\tilde q_{ij}^{(r_1)}(x)\mathbb E(\eta\n_{j,r_1}(x)\eta\n_{i,r_1}(x))\,dx
$$
$$
=\sum_{r_1=1}^{n_1}\int_{D_{r_1}\n}\sum_{i=1}^{n}\sum_{j=1}^{n}\tilde q_{ij}^{(r_1)}(x)\tilde
b^{(r_1)}_{j,i}(x,x)\,dx=
\sum_{r_1=1}^{n_1}\int_{D_{r_1}\n}\mbox{Sp}(\tilde{\mathbf Q}_{r_1}\n(x)\tilde
{\mathbf R}_{r_1}\n(x,x))\,dx.
$$
Analogously,
$$
\mathbb E(\tilde Q_2\tilde \eta_2,\tilde \eta_2)_{H_2}=\sum_{r_2=1}^{n_2}\mathbb E(\tilde
{Q}\m_{r_2}(x){\eta}\m_{r_2}(x),{\eta}\m_{r_2}(x))_{L^2\bigl(D_{r_2}\m\bigr)}
$$
$$
=\sum_{r_2=1}^{n_2}\int_{D_{r_2}\m}\tilde
{Q}\m_{r_2}(x)\mathbb E(\eta\m_{r_2}(x)\eta\m_{r_2}(x))\,dx=\sum_{r_2=1}^{n_2}\int_{D_{r_2}\m}\tilde Q_{r_2}\m(x)
\tilde R_{r_2}\m(x,x)\,dx.
$$



Uncorrelatedness  of random variables $\tilde\eta_1$ and $\tilde\eta_2$ reduces in this case to the condition
of uncorrelatedness  of the componets $\tilde\eta_{i_1,j}\n$ of random vector fields
$\tilde{\boldsymbol{\eta}}_{i_1}\n,$
$i_1=\overline{1,n_1},$ $j=\overline{1,n},$ with
random fields $\tilde\eta_{i_2}\m,$
$i_2=\overline{1,n_2}$,
and hence the set
$G_1$ is described by the formula
$$
G_1=\Bigl\{\tilde\eta=(\tilde\eta_1,\tilde\eta_2):
\tilde\eta_1=(\tilde{\boldsymbol{\eta}}\n_1,\ldots,\tilde{\boldsymbol{\eta}}\n_{i_1,1},\dots
\tilde{\boldsymbol{\eta}}_{n_1}\n), \tilde{\boldsymbol \eta}\n_{i_1}=(\tilde\eta\n_{i_1,1},\ldots,\tilde\eta\n_{i_1,n})
$$
$$
\in L^2(\Omega,L^2(D_{i_1}\n)^n), \tilde\eta_2=(\tilde\eta\m_1,\ldots,\tilde\eta_{i_2}\m,\ldots,\tilde\eta_{n_2}\m),\tilde\eta\m_{i_2}\in L^2(\Omega,L^2(D_{i_2}\m)),
$$
$$
\mathbb E\tilde{\boldsymbol \eta}\n_{i_1}=0,
\mathbb E\tilde{\eta}\m_{i_2}=0,
\tilde\eta_{i_1,j}\n \,\, \mbox{and} \,\,
\tilde\eta_{i_2}\m\,\,\mbox{are uncorrelated},\,\,
j=\overline{1,n},
i_1=\overline{1,n_1}, i_2=\overline{1,n_2};
$$
\begin{equation}\label{G_1ap}
\sum_{i_1=1}^{n_1}\int_{D_{i_1}\n}\mbox{Sp\,}[\tilde{\mathbf
Q}_{i_1}\n(x)\tilde{\mathbf R}_{i_1}\n(x,x)]\,dx\leq 1,
\,\,
\sum_{i_1=1}^{n_1}\int_{D_{i_2}\m}\tilde Q_{i_2}\m(x)
\tilde R_{i_2}\m(x,x)\,dx\leq 1
\Bigr\}.
\end{equation}

It is easily verified that the operator $C_1^t:L^2\bigl(D_1\n\bigr)^{n}\times\dots \times L^2\bigl(D_{i_1}\n\bigr)^{n}\times\dots\times L^2\bigl(D_{n_1}\n\bigr)^{n}
 \to L^2(D)^n,$ transpose of
$C_1,$ is defined by
$C_{1}^t\psi_1(x)=\sum_{l_1=1}^{n_1}\chi_{D_{l_1}\n}(x)\int_{D_{l_1}\n}
[\mathbf K_{l_1}\n(\xi,x)]^T\boldsymbol\psi_{l_1}\n(\xi)\,d\xi,$
where
$$
\psi_1=(\boldsymbol\psi_1\n,\dots, \boldsymbol\psi_{l_1}\n,\dots,\boldsymbol\psi_{n_1}\n),\quad \boldsymbol\psi_{l_1}\in L^2\bigl(D_{l_1}\n\bigr)^n,\quad  l_1=1,\dots, n_1,
$$
and the operator $C_2^t:L^2\bigl(D_1\m\bigr)\times\dots\times L^2\bigl(D_{i_2}\m\bigr)\times\dots\times L^2\bigl(D_{n_2}\m\bigr) \to L^2(D),$ transpose of
$C_2,$ is defined by
$C_{2}^t\psi_2(x)=\sum_{l_2=1}^{n_2}\chi_{D_{l_2}\m}(x)\int_{D_{l_2}\m}
K_{l_2}\m(\xi,x)\psi_{l_2}\m(\xi)\,d\xi,$
where
$$
\psi_2=(\psi_1\m,\dots, \psi_{l_2}\m,\dots,\psi_{n_2}\m),\quad \psi_2\in L^2(D_{l_2}\m),\quad  l_2=1,\dots, n_2,
$$
and $\chi(M)$ is a characteristic function of the set
$M\subset \mathbb R^n$.

Since
$$
\hat u_1=\tilde Q_1C_1\mathbf
p_1=(\hat{\mathbf u}_{1}^1,\dots, \hat{\mathbf u}_{l_1}^1,\dots,\hat{\mathbf u}_{n_1}^1),\,\, \hat{\mathbf u}_{l_1}^1
\in L^2\bigl(D_{l_1}\n\bigr)^n,\,\, l_1=1,\dots, n_1,
$$
$$
\hat u_2=\tilde Q_2C_2
p_2=(\hat{u}_{1}^2,\dots, \hat{u}_{l_2}^2,\dots,\hat{u}_{n_2}^2),\,\, \hat{u}_{l_2}^2
\in L^2(D_{l_2}\m),\,\, l_2=1,\dots, n_2,
$$
where
\begin{equation}\label{uuns1apx}
\hat{\mathbf
u}_{i_1}\n(\cdot) =\tilde{\mathbf Q}_{i_1}\n(\cdot)
\int_{D_{l_1}\n}\mathbf K_{i_1}\n(\cdot,\eta)\mathbf
p_1(\eta)\,d\eta,\quad i_1=1,\dots, n_1,
\end{equation}
\begin{equation}\label{uuns2ap}
\hat
u_{i_2}\m(\cdot)=\tilde{ Q}_{i_2}\m(\cdot)
\int_{D_{l_2}\m}K_{i_2}\m(\cdot,\eta)
p_2(\eta)\,d\eta,\quad i_2=1,\dots, n_2,
\end{equation}
we find
$$
C_1^tJ_{H_1}\tilde Q_1C_1\mathbf
{p}_1(\cdot)=C_1^t\hat u_1(\cdot)=\sum_{l_1=1}^{n_1}\chi_{D_{l_1}\n}(\cdot)\int_{D_{l_1}\n}
[\mathbf K_{l_1}\n(\xi,\cdot)]^T\hat{\mathbf u}_{l_1}(\xi)\,d\xi
$$
$$
=\sum_{l_1=1}^{n_1}\chi_{D_{l_1}\n}(\cdot)\int_{D_{l_1}\n}
[\mathbf K_{l_1}\n(\xi,\cdot)]^T\tilde{\mathbf Q}_{l_1}\n(\xi)
\int_{D_{l_1}\n}\mathbf K_{l_1}\n(\xi,\xi_1)\mathbf
p_1(\xi_1)\,d\xi_1\,d\xi=
$$
\begin{equation}\label{vir}
=\sum_{l_1=1}^{n_1}\chi_{D_{l_1}\n}(\cdot)\int_{D_{l_1}\n}
\tilde{\mathbf K}_{l_1}\n(\cdot,\xi_1)\mathbf p_1(\xi_1)
\,d\xi_1,
\end{equation}
\begin{equation}\label{virr}
C_2^tJ_{H_2}\tilde Q_2C_2
p_2(\cdot)
=\sum_{l_2=1}^{n_2}\chi_{D_{l_2}\m}(\cdot)\int_{D_{l_2}\m}
\tilde{K}_{l_2}\m(\cdot,\xi_1)p_2(\xi_1)
\,d\xi_1,
\end{equation}
where\footnote{We use the following notation: if $\mathbf A(\xi)=[a_{ij}(\xi)]_{i,j=1}^N$
is a matrix dependig on variable $\xi$ that varies on measurable set $\Omega,$ then we define $\int_\Omega \mathbf A(\xi)\,d\xi$ by the equality
$$
\int_\Omega \mathbf A(\xi)\,d\xi=\left[\int_\Omega a_{ij}(\xi)\,d\xi\right]_{i,j=1}^N.
$$}
$$
\tilde{\mathbf K}_{l_1}\n(\cdot,\xi_1)=\int_{D_{l_1}\n}(\mathbf K_{l_1}\n(\xi,\cdot))^T
\tilde{\mathbf Q}_{l_1}\n(\xi)\mathbf K_{l_1}\n(\xi,\xi_1)d\xi,
$$
$$
\tilde{K}_{l_2}\m(\cdot,\xi_1)=\int_{D_{l_1}\n}K_{l_2}\m(\xi,\cdot))
\tilde{Q}_{l_2}\n(\xi)K_{l_2}\m(\xi,\xi_1)d\xi.
$$

A class of linear with respect of observations (\ref{obint}) and (\ref{obint1}) estimates $\widehat{l(\mathbf j,
\varphi)}$ will take the form
\begin{equation}\label{uunsap}
\widehat{l(\mathbf j,\varphi)}=
\sum_{i_1=1}^{n_1}\int_{D_{i_1}\n}({\mathbf
u}_{i_1}\n(x),\mathbf y_{i_1}\n(x))_{\mathbb R^n}\,dx +
\sum_{i_2=1}^{n_2}\int_{D_{i_2}\m}
u_{i_2}\m(x)y_{i_2}\m(x)\,dx+ c.
\end{equation}
Thus,
taking into account  \eqref{uuns1apx}--\eqref{uunsap}, we obtain that,
under assumptions (\ref{h11h}), (\ref{G_1ap}), and (\ref{etaG_1}),
the following result is valid for integral observation operators as a corollary from Theorems 1 and 2.

\begin{pred}
The guaranteed estimate $\widehat{\widehat{l(\mathbf j,
\varphi)}}$ of $l(\mathbf {j},\varphi)$ is determined by the formula
\begin{equation*}
\widehat{\widehat{l(\mathbf j,
\varphi)}}=\sum_{i_1=1}^{n_1}\int_{D_{i_1}\n}(\hat{\mathbf
u}_{i_1}\n(x),\mathbf y_{i_1}\n(x))_{\mathbb R^n}\,dx +
\sum_{i_2=1}^{n_2}\int_{D_{i_2}\m}\hat
u_{i_2}\m(x)y_{i_2}\m(x)\,dx+\hat c=l(\hat{\mathbf j},
\hat\varphi),
\end{equation*}
where
\begin{equation*}\label{oooons}
\hat c=-\int_{D}\hat {z}_2(x)
f_0(x)\,dx,
\end{equation*}
\begin{equation*}\label{uuns1ap}
\hat{\mathbf
u}_{i_1}\n(x) =\tilde{\mathbf Q}_{i_1}\n(x)
\int_{D_{i_1}\n}\mathbf K_{i_1}\n(x,\eta)\mathbf
p_1(\eta)\,d\eta,\quad i_1= \overline{1,n_1},
\end{equation*}
\begin{equation*}\label{1uuns2ap}
\hat
u_{i_2}\m(x)=\tilde{ Q}_{i_2}\m(x)
\int_{D_{i_2}\m}K_{i_2}\m(x,\eta)
p_2(\eta)\,d\eta,\quad i_2= \overline{1,n_2},
\end{equation*}
and functions $\mathbf p_1\in H(\mbox{\rm div},D),$ $\hat z_2,p_2\in L^2(D)$ and $\hat{\mathbf j}\in H(\mbox{\rm div},D),$
$\hat\varphi\in L^2(D)$ are found from solution to systems of variational equations
\begin{multline*}
\int_{D}(((\mathbf A(x))^{-1})^T\hat{\mathbf z}_1(x),\mathbf
q_1(x))_{\mathbb R^n}dx- \int_{D}\hat z_2(x)\mbox{\rm
div\,}\mathbf q_1(x)\,dx
\\=\int_{D}\Bigl(\mathbf l_1(x)-\sum_{i_1=1}^{n_1}\chi_{D_{i_1}\n}(x)\int_{D_{i_1}\n}
\tilde{\mathbf K}_{i_1}\n(x,\xi_1)\mathbf p_1(\xi_1)
\,d\xi_1,\mathbf q_1(x)\Bigr)_{\mathbb R^n}
\,dx \,\,\forall\mathbf
q_1\in H(\mbox{\rm div},D),
\end{multline*}
\begin{multline*} \label{r14at1}
-\int_{D}v_1(x)\mbox{\rm div\,}\hat{\mathbf z}_1(x)\,dx-\int_{D}c(x)\hat z_2(x)v_1(x)\,dx
\\=\int_{D}\Bigl(l_2(x)-\sum_{i_2=1}^{n_2}\chi_{D_{i_2}\m}(x)\int_{D_{i_2}\m}
\tilde{K}_{i_2}\m(x,\xi_1)p_2(\xi_1)
\,d\xi_1\Bigr)v_1(x)\,dx\,\,\,\forall v_1\in L^2(D),
\end{multline*}
\begin{equation*}\label{r16at1}
\int_{D}((\mathbf A(x))^{-1}\mathbf {p}_1(x),\mathbf q_2(x))_{\mathbb
R^n}dx\!-\!\int_{D}p_2(x)\mbox{\rm div\,}\mathbf q_2(x)\,dx=0
\,\,\,\,
\forall \mathbf q_2\in H(\mbox{\rm div},D),
\end{equation*}
\begin{multline*}
-\int_{D}v_2(x)\mbox{\rm div\,}\hat{\mathbf p}_1(x)\,dx
-\int_{D}c(x)\hat p_2(x)v_2(x)\,dx\\
=\int_{D}v_2(x)Q^{-1}\hat z_2(x)\,dx \quad \forall v_2\in
L^2(D).
\end{multline*}
and
\begin{multline*}
\int_{D}\!\!\!(((\mathbf A(x))^{-1})^T\hat{\mathbf p}_1(x),\mathbf
q_1(x))_{\mathbb R^n}dx- \int_{D}\hat p_2(x)\mbox{\rm
div\,}\mathbf q_1(x)\,dx\\=\int_{D}\Bigl(\mathbf d_1(x)-\sum_{i_1=1}^{n_1}\chi_{D_{i_1}\n}(x)\!\!\int_{D_{i_1}\n}\!\!\!
\tilde{\mathbf K}_{i_1}\n(x,\xi_1)\hat{\mathbf j}_1(\xi_1)
\,d\xi_1,\mathbf q_1(x)\Bigr)_{\mathbb R^n}
\!dx \,\,\forall\mathbf
q_1\in H(\mbox{\rm div},D),
\end{multline*}\\[-40pt]
\begin{multline*} \label{p14aaag}
-\int_{D}v_1(x)\mbox{\rm div\,}\hat{\mathbf p}_1(x)\,dx
-\int_{D}c(x)\hat p_2(x)v_1(x)\,dx
\\=\int_{D}\Bigl(d_2(x)-\sum_{i_2=1}^{n_2}\chi_{D_{i_2}\m}(x)\int_{D_{i_2}\m}
\tilde{K}_{i_2}\m(x,\xi_1)\hat\varphi(\xi_1)
\,d\xi_1\Bigr)v_1(x)\,dx\,\,\,\forall v_1\in L^2(D),
\end{multline*}
\begin{equation*}\label{pr16at1g}
\int_{D}((\mathbf A(x))^{-1}\hat{\mathbf j}(x),\mathbf
q_2(x))_{\mathbb R^n}\,dx- \int_{D}\hat\varphi(x)\mbox{\rm
div\,}\mathbf q_2(x)\,dx=0\quad
\forall \mathbf q_2\in H(\mbox{\rm div},D),
\end{equation*}
\begin{multline*} \label{18aaag}
-\int_{D}v_2(x)\mbox{\rm div\,}\hat{\mathbf j}(x)\,dx
-\int_{D}c(x)\hat\varphi(x)v_2(x)\,dx
\\=\int_{D}v_2(x)(Q^{-1}\hat p_2(x)+f_0(x))\,dx \quad
\forall v_2\in L^2(D),
\end{multline*}
respectively. Here $\hat{\mathbf z}_1, \hat{\mathbf p}_1\in H(\mbox{\rm div},D),$ $\hat p_2\in L^2(D)$ and
$$
\mathbf d_1(x)=\sum_{i_1=1}^{n_1}\chi_{D_{i_1}\n}(x)\int_{D_{i_1}\n}(\mathbf K_{i_1}\n(\xi,x))^T \tilde{
\mathbf Q}_{i_1}\n(\xi)\mathbf y_{i_1}\n(\xi)\,d\xi,
$$
$$
d_2(x)=\sum_{i_2=1}^{n_2}\chi_{D_{i_2}\m}(x)\int_{D_{i_2}\m}K_{i_2}\m(\xi,x) \tilde{
Q}_{i_2}\m(\xi)y_{i_2}\m(\xi)\,d\xi.
$$

The estimation error $\sigma$ is given by the expression
\begin{equation*}\label{rllns}
\sigma=l(\mathbf {p}_1,p_2)^{1/2}.
\end{equation*}
\end{pred}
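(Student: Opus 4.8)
The plan is to obtain this statement as a direct specialization of Theorems~1 and~2 to the integral observation operators introduced in \eqref{obint}, \eqref{obint1}. First I would record that, since $H_1$ and $H_2$ are finite products of spaces of the form $L^2(D_{i_1}\n)^{n}$ and $L^2(D_{i_2}\m)$ identified with their own duals, the Riesz operators reduce to identities, $J_{H_1}=I_{H_1}$ and $J_{H_2}=I_{H_2}$. Combined with the explicit expressions for the transpose operators $C_1^t$ and $C_2^t$ established just before the statement, this lets every abstract ingredient entering Theorems~1 and~2 be written out in closed integral form, so that no new analytic input is required beyond those two theorems.

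The key computational step is to evaluate the compositions $C_1^tJ_{H_1}\tilde Q_1 C_1$ and $C_2^tJ_{H_2}\tilde Q_2 C_2$ appearing on the right-hand sides of the variational systems \eqref{r13at1}--\eqref{r15at1} and \eqref{13aaag}--\eqref{18aaagq}. Using the block structure of $\tilde Q_1,\tilde Q_2$ together with the formulas for $C_1^t,C_2^t$, these compositions become integral operators with the kernels $\tilde{\mathbf K}_{l_1}\n$ and $\tilde K_{l_2}\m$; this is exactly the content of \eqref{vir} and \eqref{virr}. Substituting \eqref{vir}, \eqref{virr} into the general systems turns them into the componentwise systems displayed in the statement. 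Likewise, the abstract optimal controls $\hat u_1=\tilde Q_1 C_1\mathbf p_1$ and $\hat u_2=\tilde Q_2 C_2 p_2$ from \eqref{hat} decompose, component by component, into the integral expressions \eqref{uuns1apx}, \eqref{uuns2ap}, so that $(y_1,\hat u_1)_{H_1}+(y_2,\hat u_2)_{H_2}$ becomes the sum of integrals in \eqref{uunsap}.

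Next I would confirm that the admissible set $G_1$ and the restriction \eqref{restr2} translate into the trace conditions \eqref{G_1ap}; this is carried out in the computation preceding the statement, where $\mathbb E(\tilde Q_1\tilde\eta_1,\tilde\eta_1)_{H_1}$ and $\mathbb E(\tilde Q_2\tilde\eta_2,\tilde\eta_2)_{H_2}$ are reduced to integrals of $\mbox{Sp}(\tilde{\mathbf Q}_{r_1}\n\tilde{\mathbf R}_{r_1}\n)$ and of $\tilde Q_{r_2}\m\tilde R_{r_2}\m$. With the hypotheses of Theorems~1 and~2 thus verified in this concrete setting, their conclusions apply verbatim: the guaranteed estimate admits the representation \eqref{exactest} (equivalently \eqref{Altgxx}), now in explicit integral form, while the constant $\hat c$ and the error identity $\sigma=l(\mathbf p_1,p_2)^{1/2}$ are inherited unchanged from \eqref{hat} and \eqref{rll}.

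I expect the only genuine work to be bookkeeping: keeping track of the product structure of $H_1$ and $H_2$, matching each subdomain $D_{i_1}\n,D_{i_2}\m$ with the correct characteristic function $\chi_{D_{l_1}\n},\chi_{D_{l_2}\m}$ in the transpose operators, and confirming that the composed kernels $\tilde{\mathbf K}_{l_1}\n,\tilde K_{l_2}\m$ are assembled with the transposed kernels $(\mathbf K_{l_1}\n)^T$ in the correct order. There is no new analytic obstacle, since unique solvability of the variational systems, the representation formulas for the estimate, and the error identity are all already supplied by Theorems~1 and~2.
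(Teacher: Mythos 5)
Your proposal is correct and coincides with the paper's own treatment: the paper gives no separate proof, stating the result as a corollary of Theorems~1 and~2 after exactly the preparatory computations you invoke (the identifications $J_{H_1}=I_{H_1}$, $J_{H_2}=I_{H_2}$, the explicit transposes $C_1^t,C_2^t$, the composed kernels in \eqref{vir}, \eqref{virr}, the componentwise controls \eqref{uuns1apx}, \eqref{uuns2ap}, and the reduction of \eqref{restr2} to the trace conditions \eqref{G_1ap}). Nothing further is needed; your ``bookkeeping'' description matches the paper's argument step for step.
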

\section{Minimax estimation of linear functionals from right-hand sides of elliptic equations: Representations for guaranteed estimates and estimation errors}

The problem is to determine a minimax estimate of the value of the functional
\begin{equation}\label{linff}
l(f):=  \int_{D}l_0(x)f(x)\, dx
\end{equation}
from observations \eqref{observ}
in the class of estimates, linear with respect to observations,
\begin{equation}\label{clasf}
\widehat{l(f)}:=(y_1,u_1)_{H_1}
+(y_2,u_2)_{H_2}+c,
\end{equation}
where $u_1$ and $u_2$ are elements from Hilbert spaces $H_1$ and $H_2,$ respectively, $c\in \mathbb R,$
$l_0\in L^2(D)$ is a given
function, under the assumption
that $f\in G_0$ and $\eta\in G_1,$ where sets $G_0$ and $G_1$ are defined on page \pageref{s5}.
\begin{predll}
The  estimate of the form
\begin{equation}\label{defmef}
\widehat{\widehat {l(f)}}=(y_1,\hat u_1)_{H_1}+(y_2,\hat
u_2)_{H_2}+\hat c
\end{equation}
will be called the guaranteed estimate of $l(f)$ if the elements
$\hat u_1\in H_1,$ $\hat u_2\in H_2$ and a number $\hat c$
are determined from the condition
\begin{equation*}\label{m16f}
\inf_{u\in H,\,c\in \mathbb R}\sigma(u,c)=\sigma(\hat u,\hat c),
\end{equation*}
where $u=(u_1,u_2)\in H=H_1\times H_2,$ $\hat u=(\hat u_1,\hat u_2)\in H,$
$$\sigma(u,c):=\sup_{\tilde f \in
G_0, (\tilde \eta_1,\tilde \eta_2)\in G_1}\mathbb E|l(\tilde f)-\widehat {l(\tilde f)}|^2,$$
\begin{equation}\label{mef}
\widehat{l(\tilde f)}:=(\tilde y_1,u_1)_{H_1}
+(\tilde y_2,u_2)_{H_2}+c,
\end{equation}
$\tilde y_1=C_1\tilde {\mathbf j}+\tilde \eta_1, \quad
\tilde y_2=C_2\tilde \varphi+\tilde \eta_2,$ and $(\mathbf {\tilde j},\tilde
\varphi)$ is a solution to problem
\eqref{eqsystem1}--\eqref{eqsystem2} when $f(x)=\tilde f(x)$.

The quantity
\begin{equation*}\label{mm16f}
\sigma:=[\sigma(\hat u,\hat c)]^{1/2}
\end{equation*} is called the error of the guaranteed estimation of $l(f).$

\end{predll}



For any fixed $u:=(u_1,u_2)\in H,$ introduce a pair of functions $
(\mathbf z_1(\cdot;u),z_2(\cdot;u))\in
H(\mbox{div};D)\times L^2(D)
$
as a unique solution of the following problem:
\begin{multline}\label{113f}
\int_{D}(((\mathbf A(x))^{-1})^T\mathbf z_1(x;u),\mathbf
q(x))_{\mathbb R^n}dx- \int_{D}z_2(x;u)\mbox{div\,}\mathbf
q(x)\,dx=\\ -\int_{D}((C_1^tJ_{H_1}u_1)(x),\mathbf q(x))_{\mathbb
R^n}dx\quad \forall \mathbf q\in H(\mbox{div},D),
\end{multline}
\begin{multline}\label{115f}
\int_{D}v(x)\mbox{div\,}\mathbf
z_1(x;u)\,dx+\int_{D}c(x)z_2(x;u)v(x)\,dx\\=\int_{D}(C_2^tJ_{H_2}u_2)(x)v(x)\,dx\quad
\forall v\in L^2(D).
\end{multline}
\begin{predl}
Finding the guaranteed estimate
of $l(f)$ is equivalent to the problem of optimal control of a system described by the problem
(\ref{113f}), (\ref{115f}) with cost function
\begin{multline}\label{m20f}
I(u)=\left(Q^{-1}(l_0-z_2(\cdot;u)),
l_0-z_2(\cdot;u)\right)_{L^2(D)}\\+ (\tilde
Q_1^{-1}u_1,u_1)_{H_1}+(\tilde Q_2^{-1}u_2,u_2)_{H_2}\!\to
\inf_{u\in H}.
\end{multline}
\end{predl}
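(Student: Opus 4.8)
The plan is to mirror the proof of Lemma~1 almost verbatim, the only structural changes coming from the facts that the functional $l(f)=(l_0,f)_{L^2(D)}$ now acts directly on the right-hand side and that the auxiliary system (\ref{113f}), (\ref{115f}) carries no $\mathbf l_1,l_2$ terms. First I would substitute $\tilde y_1=C_1\tilde{\mathbf j}+\tilde\eta_1$ and $\tilde y_2=C_2\tilde\varphi+\tilde\eta_2$ into (\ref{mef}) and, using the Riesz operators $J_{H_1},J_{H_2}$ and the transpose operators $C_1^t,C_2^t$ exactly as in the chain leading to (\ref{gh1}), rewrite the estimation error as
$$l(\tilde f)-\widehat{l(\tilde f)}=(l_0,\tilde f)_{L^2(D)}-(C_1^tJ_{H_1}u_1,\tilde{\mathbf j})_{L^2(D)^n}-(C_2^tJ_{H_2}u_2,\tilde\varphi)_{L^2(D)}-(u_1,\tilde\eta_1)_{H_1}-(u_2,\tilde\eta_2)_{H_2}-c.$$

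The central computation is the cross-term identity. I would put $\mathbf q=\tilde{\mathbf j}$ in (\ref{113f}) and $v=\tilde\varphi$ in (\ref{115f}), and conversely $\mathbf q=\mathbf z_1(\cdot;u)$ in the state equation (\ref{16at1v}) and $v=z_2(\cdot;u)$ in (\ref{14aat1v}). Using the symmetry $\bigl((((\mathbf A)^{-1})^T\mathbf z_1,\tilde{\mathbf j})\bigr)=\bigl(((\mathbf A)^{-1}\tilde{\mathbf j},\mathbf z_1)\bigr)$ and cancelling the divergence and the $c(x)$-weighted terms pairwise, all four integrals collapse and leave
$$-(C_1^tJ_{H_1}u_1,\tilde{\mathbf j})_{L^2(D)^n}-(C_2^tJ_{H_2}u_2,\tilde\varphi)_{L^2(D)}=-(\tilde f,z_2(\cdot;u))_{L^2(D)}.$$
Substituting this back yields the clean form $l(\tilde f)-\widehat{l(\tilde f)}=(l_0-z_2(\cdot;u),\tilde f)_{L^2(D)}-(u_1,\tilde\eta_1)_{H_1}-(u_2,\tilde\eta_2)_{H_2}-c$. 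It is the combination $l_0-z_2$ that now appears, which is precisely the source of the term $Q^{-1}(l_0-z_2(\cdot;u))$ in the cost function (\ref{m20f}).

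From here the argument is identical to Lemma~1. Writing $\tilde f=(\tilde f-f_0)+f_0$, denoting the right-hand side by a random variable $\xi$, and splitting $\mathbb E|\xi|^2=(\mathbb E\xi)^2+\mathbf D\xi$ via (\ref{dispe}), the noise enters only through $\xi-\mathbb E\xi=-(u_1,\tilde\eta_1)_{H_1}-(u_2,\tilde\eta_2)_{H_2}$. Taking $\inf_{c\in\mathbb R}$ annihilates the deterministic constant with the choice $c=(l_0-z_2(\cdot;u),f_0)_{L^2(D)}$, after which the Cauchy--Bunyakovsky inequality together with the constraint (\ref{h11h}) gives
$$\sup_{\tilde f\in G_0}\bigl[(l_0-z_2(\cdot;u),\tilde f-f_0)_{L^2(D)}\bigr]^2=(Q^{-1}(l_0-z_2(\cdot;u)),l_0-z_2(\cdot;u))_{L^2(D)},$$
the supremum being attained at the explicit extremal $\tilde f$ exactly as in the derivation of (\ref{ner56}). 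For the stochastic part I would reproduce (\ref{ner56})--(\ref{ner560}) unchanged, obtaining $\sup_{(\tilde\eta_1,\tilde\eta_2)\in G_1}\mathbb E[(u_1,\tilde\eta_1)_{H_1}+(u_2,\tilde\eta_2)_{H_2}]^2=(\tilde Q_1^{-1}u_1,u_1)_{H_1}+(\tilde Q_2^{-1}u_2,u_2)_{H_2}$. Adding the two suprema shows $\inf_{c}\sigma(u,c)=I(u)$ with $I$ as in (\ref{m20f}), so minimizing over $u\in H$ is equivalent to the stated optimal control problem.

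I expect the only genuinely delicate point to be the sign bookkeeping in the cross-term identity: the system (\ref{113f}), (\ref{115f}) is written with signs differing from (\ref{113}), (\ref{115}) (notably (\ref{115f}) carries $+\int_D v\operatorname{div}\mathbf z_1\,dx$ rather than $-\int_D v\operatorname{div}\mathbf z_1\,dx$), and one must verify that these are precisely the signs that make the divergence and $c(x)$-terms cancel so that only $-(\tilde f,z_2(\cdot;u))$ survives. Everything else is a transcription of the Lemma~1 computation with $z_2$ replaced throughout by $l_0-z_2$.
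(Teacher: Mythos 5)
Your proposal is correct and follows essentially the same route as the paper: the paper's own proof derives the error representation \eqref{gh1f}, then simply invokes ``a similar argument as in the proof of Lemma 1'' with the adjoint system \eqref{113f}, \eqref{115f} to obtain $l(\tilde f)-\widehat{l(\tilde f)}=(\tilde f,\,l_0-z_2(\cdot;u))_{L^2(D)}-(u_1,\tilde\eta_1)_{H_1}-(u_2,\tilde\eta_2)_{H_2}-c$, and finishes exactly as you do, with $c=(f_0,\,l_0-z_2(\cdot;u))_{L^2(D)}$, the Cauchy--Bunyakovsky step against \eqref{h11h}, and the noise suprema as in \eqref{ner56}--\eqref{ner560}. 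Your explicit sign verification of the cross-term identity (the flipped signs in \eqref{115f} versus \eqref{115}) is precisely the step the paper leaves implicit, and your bookkeeping there is right; the only blemish is a harmless mislabeling (the extremal $\tilde f$ argument belongs to \eqref{ner88}, not \eqref{ner56}).
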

\begin{proof}
Taking into account \eqref{linff} at$f=\tilde f$ and \eqref{mef}, we have
$$
l(\tilde f)-\widehat{l(\tilde f)}=
(l_0,\tilde f)_{L^2(D)}
-(\tilde y_1,u_1)_{H_1}
-(\tilde y_2,u_2)_{H_2}-c
$$
$$
=(l_0,\tilde f)_{L^2(D)}
-(u_1,C_1\tilde{\mathbf j}+\tilde\eta_1)_{H_1}-(u_2,C_2\tilde
\varphi+\tilde\eta_2)_{H_2}-c
$$
$$
=(l_0,\tilde f)_{L^2(D)}
-<J_{H_1}u_1,C_1\tilde{\mathbf j}>_{H_1'\times H_1}-<
J_{H_2}u_2,C_2\tilde \varphi>_{H_2'\times H_2}
$$
$$
 -(u_1,\tilde\eta_1)_{H_1}-(u_2,\tilde\eta_2)_{H_2}-c
$$
$$
=-(C_1^tJ_{H_1}u_1,\tilde{\mathbf j})_{L^2(D)^n}
-(C_2^t J_{H_2}u_2,\tilde{\varphi})_{L^2(D)}
$$
\begin{equation}\label{gh1f}
+(l_0,\tilde f)_{L^2(D)}
-(u_1,\tilde\eta_1)_{H_1}-(u_2,\tilde\eta_2)_{H_2}-c.
\end{equation}

Using a similar argument as in the proof of Lemma 1 in which the solution of problem \eqref{113}, \eqref{115} is substituted by the solution
of problem (\ref{113f}), (\ref{115f}), we obtain from \eqref{gh1f} the following representation
$$
l(\tilde f)-\widehat{l(\tilde f)}
=(\tilde
f,l_0-z_2(\cdot;u)_{L^2(D)}
-(u_1,\tilde\eta_1)_{H_1}-(u_2,\tilde\eta_2)_{H_2}-c
$$
$$
=(\tilde f-f_0,l_0-z_2(\cdot;u))_{L^2(D)}+(f_0,l_0-z_2(\cdot;u))_{L^2(D)}
$$
$$
-(u_1,\tilde\eta_1)_{H_1}-(u_2,\tilde\eta_2)_{H_2}-c.
$$
By virtue of \eqref{dispe}, we find from here
$$
\mathbb E\left|l(\tilde f)-\widehat
{(l(\tilde f)}\right|^2
 =\left|(\tilde
f_2-f_0,l_0-z_2(\cdot;u))_{L^2(D)}
+(f_0,l_0-z_2(\cdot;u))_{L^2(D)}-c\right|^2
$$
\begin{equation*}
+\mathbb E[(u_1,\tilde\eta_1)_{H_1}+(u_2,\tilde\eta_2)_{H_2}]^2.
\end{equation*}
From the latter equality, we obtain
$$
 \inf_{c \in
\mathbb R}\sup_{\tilde f\in G_0, (\tilde
\eta_1,\tilde \eta)\in G_1} \mathbb E|l(\tilde f
)-\widehat {l(\tilde
f)}|^2=
$$
$$
=\inf_{c \in \mathbb R}\sup_{\tilde f\in
G_0}\left[(\tilde f-f_0,l_0-z_2(\cdot;u))_{L^2(D)}+(f_0,l_0-z_2(\cdot;u))_{L^2(D)}-c\right]^2
$$
$$
+ \sup_{ (\tilde \eta_1,\tilde \eta_2)\in G_1}\mathbb
E[(\tilde\eta_1,u_1)_{H_1}+(\tilde\eta_2,u_2)_{H_2}]^2
$$
\begin{equation}\label{exhf}
=\sup_{\tilde f\in G_0}\left[
 (\tilde f-f^{(0)},l_0-z_2(\cdot;u))_{L^2(D)}\right]^2
+ \sup_{ (\tilde \eta_1,\tilde \eta_2)\in G_1}\mathbb
E[(\tilde\eta_1,u_1)_{H_1}+(\tilde\eta_2,u_2)_{H_2}]^2,
\end{equation}
where infimum over $c$ is attained at
$
c=(f_0,l_0-z_2(\cdot;u))_{L^2(D)}.
$
Cauchy$-$Bunyakovsky inequality and \eqref{h11h}
imply
$$
|(\tilde f-f_0,l_0-z_2(\cdot;u))_{L^2(D)}|^2
$$
$$
\leq
(Q^{-1}(l_0-z_2(\cdot;u)),l_0-z_2(\cdot;u))_{L^2(D)}
(Q(\tilde f-f_0),\tilde f-f_0)_{L^2(D)}
$$
$$
\leq
(Q^{-1}(l_0-z_2(\cdot;u)),l_0-z_2(\cdot;u))_{L^2(D)},
$$
where inequality  becomes an equality at
$$
\tilde f=f_0+\frac {Q^{-1}(l_0-z_2(\cdot;u))}{(Q^{-1}(l_0-z_2(\cdot;u)),l_0-z(\cdot;u))_{L^2(D)}^{1/2}}.
$$
Hence
$$
\sup_{\tilde f\in G_0}\left[
 (\tilde f_2-f_2^{(0)},l_0-z_2(\cdot;u))_{L^2(D)}\right]^2
 =(Q^{-1}(l_0-z_2(\cdot;u)),l_0-z_2(\cdot;u))_{L^2(D)}.
$$
Analoguosly, due to \eqref{restr2}, \eqref{rand4}, and \eqref{nekor}, we have
$$
 \sup_{ (\tilde \eta_1,\tilde \eta_2)\in G_1}\mathbb
E[(\tilde\eta_1,u_1)_{H_1}+(\tilde\eta_2,u_2)_{H_2}]^2
=(\tilde
Q_1^{-1}u_1,u_1)_{H_1}+(\tilde Q_2^{-1}u_2,u_2)_{H_2}.
$$
From two latter relations and (\ref{exhf}), we get
$$
 \inf_{c \in
\mathbb R}\sup_{\tilde f\in G_0, (\tilde
\eta_1,\tilde \eta)\in G_1} \mathbb E|l(\tilde f)-\widehat {l(
\tilde f)}|^2
=I(u),
$$
at $c=(
l_0-z_2(\cdot;u),f_0)_{L^2(D)},$ where $I(u)$
is determined by (\ref{m20f}).
\end{proof}
As a result of solving of optimal control problem
(\ref{113f}) -- (\ref{m20f}), we come to the following assertion.
\begin{pred} \label{t4}
There exists a unique estimate of $l(f)$
which has the form
\begin{equation}\label{haturr}
\widehat{\widehat {l(f)}}=(y_1,\hat u_1)_{H_1}+(y_2,\hat
u_2)_{H_2}+\hat c,
\end{equation}
where
\begin{equation}\label{haturr1}
\hat c=\int_{D}
(l_0(x)-\hat z_2(x))f_0(x)\,dx,\quad\hat u_1=\tilde Q_1C_1\mathbf
{p}_1,\quad\hat u_2= \tilde Q_2C_2 p_2,
\end{equation}
and the functions $\mathbf {p}_1\in H(\mbox{\rm
div},D)$ and $\hat z_2 ,p_2\in L^2(D)$ are found from solution
of the following variational problem:
\begin{multline} \label{r13at1f}
\int_{D}(((\mathbf A(x))^{-1})^T\hat{\mathbf z}_1(x),\mathbf
q_1(x))_{\mathbb R^n}dx- \int_{D}\hat z_2(x)\mbox{\rm
div\,}\mathbf q_1(x)\,dx
\\=-\int_{D}(C_1^tJ_{H_1}\tilde Q_1C_1\mathbf
{p}_1(x),\mathbf q_1(x))_{\mathbb R^n}\,dx \quad\forall\mathbf
q_1\in H(\mbox{\rm div},D),
\end{multline}
\begin{multline} \label{r14at1f}
\int_{D}v_1(x)\mbox{\rm div\,}\hat{\mathbf z}_1(x)\,dx+\int_{D}c(x)\hat z_2(x)v_1(x)\,dx
\\=\int_{D}(C_2^tJ_{H_2}\tilde Q_2C_2
p_2)(x)v_1(x)\,dx\quad\forall v_1\in L^2(D),
\end{multline}
\begin{multline}\label{r16at1f}
\int_{D}((\mathbf A(x))^{-1}\mathbf {p}_1(x),\mathbf q_2(x))_{\mathbb
R^n}dx\\- \int_{D}p_2(x)\mbox{\rm div\,}\mathbf q_2(x)\,dx=0
 \quad
\forall \mathbf q_2\in H(\mbox{\rm div},D),
\end{multline}
\begin{multline} \label{r15aat1f}
\int_{D}v_2(x)\mbox{\rm div\,}\mathbf p_1(x)\,dx
+\int_{D}c(x)p_2(x)v_2(x)\,dx\\=\int_{D}v_2(x)Q^{-1}(l_0-\hat z_2(\cdot))(x)\,dx \quad \forall v_2\in
L^2(D),
\end{multline}
where $\hat{\mathbf z}_1\in H(\mbox{\rm
div},D).$ Problem
\eqref{r13at1f}--\eqref{r15aat1f} is uniquely solvable.
The error of estimation $\sigma$ is given by the expression
\begin{equation}\label{rllf}
\sigma=\left(l(Q^{-1}(
l_0-\hat z_2))\right)^{1/2}.
\end{equation}
\end{pred}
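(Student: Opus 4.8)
The plan is to mirror the proof of Theorem 1, with the control system \eqref{113f}, \eqref{115f} and the cost functional \eqref{m20f} of Lemma 2 playing the roles of \eqref{113}, \eqref{115} and \eqref{m20}. First I would invoke Lemma 2 to reduce the construction of $\widehat{\widehat{l(f)}}$ to minimizing $I(u)$ over $H$. Decomposing $I(u)=\pi(u,u)+L(u)+\text{const}$ with $\pi$ the symmetric bilinear form polarizing its quadratic part, I would verify continuity of $\pi$ and $L$ exactly as in the proof of Theorem 1: the a priori bound for the map $u\mapsto z_2(\cdot;u)$ (the analogue of \eqref{eqn2'ap}) controls the term $(Q^{-1}(l_0-z_2(\cdot;u)),l_0-z_2(\cdot;v))_{L^2(D)}$, and $\tilde Q_1^{-1},\tilde Q_2^{-1}$ are bounded. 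Coercivity follows directly from \eqref{posdefme}, since $\pi(u,u)\ge(\tilde Q_1^{-1}u_1,u_1)_{H_1}+(\tilde Q_2^{-1}u_2,u_2)_{H_2}\ge\alpha\|u\|_H^2$, so Theorem 1.1 of \cite{BIBLlio} furnishes a unique minimizer $\hat u=(\hat u_1,\hat u_2)$; this yields existence and uniqueness of the estimate.

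Next I would write the optimality condition $\tfrac12\tfrac{d}{d\tau}I(\hat u+\tau w)\big|_{\tau=0}=0$. Since $u\mapsto z_2(\cdot;u)$ is linear, $z_2(\cdot;\hat u+\tau w)=z_2(\cdot;\hat u)+\tau z_2(\cdot;w)$, and the minus sign in $l_0-z_2$ gives
\[
-(Q^{-1}(l_0-z_2(\cdot;\hat u)),z_2(\cdot;w))_{L^2(D)}+(\tilde Q_1^{-1}\hat u_1,w_1)_{H_1}+(\tilde Q_2^{-1}\hat u_2,w_2)_{H_2}=0.
\]
To interpret the first term I would introduce the adjoint pair $(\mathbf p_1,p_2)$ solving \eqref{r16at1f}, \eqref{r15aat1f} with $\hat z_2:=z_2(\cdot;\hat u)$, test \eqref{r15aat1f} with $v_2=z_2(\cdot;w)$ and \eqref{r16at1f} with $\mathbf q_2=\mathbf z_1(\cdot;w)$, and test the control equations \eqref{113f}, \eqref{115f} with $\mathbf q=\mathbf p_1$, $v=p_2$. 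Eliminating the four interior integrals yields $(Q^{-1}(l_0-\hat z_2),z_2(\cdot;w))_{L^2(D)}=(C_1^tJ_{H_1}w_1,\mathbf p_1)_{L^2(D)^n}+(C_2^tJ_{H_2}w_2,p_2)_{L^2(D)}=(w_1,C_1\mathbf p_1)_{H_1}+(w_2,C_2p_2)_{H_2}$ by the definitions of $C_1^t,C_2^t$ and $J_{H_1},J_{H_2}$. Inserting this into the optimality relation and letting $w$ range over $H$ gives $\hat u_1=\tilde Q_1C_1\mathbf p_1$, $\hat u_2=\tilde Q_2C_2p_2$, i.e. \eqref{haturr1}, while $\hat c=\int_D(l_0-\hat z_2)f_0\,dx$ is the optimal constant already identified in the proof of Lemma 2. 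Substituting $\hat u$ back into \eqref{113f}, \eqref{115f} and writing $\hat{\mathbf z}_1=\mathbf z_1(\cdot;\hat u)$ turns them into \eqref{r13at1f}, \eqref{r14at1f}, which together with \eqref{r16at1f}, \eqref{r15aat1f} is the claimed system; its unique solvability is inherited from the uniqueness of $\hat u$.

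Finally, for \eqref{rllf} I would start from $\sigma^2=I(\hat u)$ and use \eqref{haturr1} to rewrite the control terms as $(C_1\mathbf p_1,\tilde Q_1C_1\mathbf p_1)_{H_1}+(C_2p_2,\tilde Q_2C_2p_2)_{H_2}$. Repeating the same four-fold testing of \eqref{r13at1f}--\eqref{r15aat1f}, now with $\mathbf q_2=\hat{\mathbf z}_1$, $v_2=\hat z_2$, $\mathbf q_1=\mathbf p_1$, $v_1=p_2$, produces the dual identity $(C_1\mathbf p_1,\tilde Q_1C_1\mathbf p_1)_{H_1}+(C_2p_2,\tilde Q_2C_2p_2)_{H_2}=(Q^{-1}(l_0-\hat z_2),\hat z_2)_{L^2(D)}$, whence
\[
\sigma^2=(Q^{-1}(l_0-\hat z_2),l_0-\hat z_2)_{L^2(D)}+(Q^{-1}(l_0-\hat z_2),\hat z_2)_{L^2(D)}=(l_0,Q^{-1}(l_0-\hat z_2))_{L^2(D)}=l(Q^{-1}(l_0-\hat z_2)),
\]
using selfadjointness of $Q^{-1}$ and $l(g)=\int_D l_0 g\,dx$. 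The main obstacle will be the sign-bookkeeping in these two eliminations: the system \eqref{r13at1f}--\eqref{r15aat1f} carries divergence-term signs opposite to those of \eqref{r13at1}--\eqref{r15at1}, so the cancellations that produced $l(\mathbf p_1,p_2)$ in Theorem 1 must be recomputed carefully to land on the $l_0$-weighted error expression here.
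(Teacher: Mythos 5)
Your proposal is correct and follows essentially the same route as the paper's own proof: reduction via the lemma to minimizing $I(u)$, the decomposition into a coercive quadratic form plus a continuous linear functional with Lions' Theorem 1.1 giving the unique minimizer, the optimality condition combined with the adjoint pair $(\mathbf p_1,p_2)$ and four-fold testing to obtain $\hat u_1=\tilde Q_1C_1\mathbf p_1$, $\hat u_2=\tilde Q_2C_2p_2$ and the coupled system, and the computation $\sigma^2=I(\hat u)$ reduced by the dual identity to $l(Q^{-1}(l_0-\hat z_2))$. All the sign-sensitive identities you flag as the main risk in fact check out exactly as you state them (your dual identity $(C_1\mathbf p_1,\tilde Q_1C_1\mathbf p_1)_{H_1}+(C_2p_2,\tilde Q_2C_2p_2)_{H_2}=(Q^{-1}(l_0-\hat z_2),\hat z_2)_{L^2(D)}$ is an equivalent rearrangement of the one the paper derives), so there is no gap.
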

\begin{proof}
Show that the solution to the optimal control problem
(\ref{113f})--(\ref{m20f}) can be reduced to the solution
of system (\ref{r13at1f})-(\ref{r15aat1f}).

First, we note that fuctional
$I(u),$ defined by \eqref{m20f}, can be represented in the form
\begin{equation}\label{qfunkf}
I(u)=\tilde I(u)-L(u) +\int_{D}Q^{-1}l_0(x)l_0(x) \,dx,
\end{equation}
where
\begin{equation*} \tilde I(u)=\int_{D}Q^{-1}z_2(x;u)z_2(x;u) \,dx
+ (\tilde
Q_1^{-1}u_1,u_1)_{H_1}+(\tilde Q_2^{-1}u_2,u_2)_{H_2}
\end{equation*}
is a quadratic form corresponding
to a symmetric continuous bilinear form

$$
\pi(u,v):= \int_{D}Q^{-1}z_2(x;u)z_2(x;v) \,dx+ (\tilde
Q_1^{-1}u_1,v_1)_{H_1}+(\tilde Q_2^{-1}u_2,v_2)_{H_2},
$$
defined on $H\times H$ and
\begin{equation*}
L(u)= 2\int_{D} Q^{-1}z_2(x;u)l_0(x) \, dx
\end{equation*}
is a linear continuous functional defined on  $H$.

The representation of in the form \eqref{qfunkf} follows from the reasoning similar to that in the proof of Theorem 1
(replacing $\tilde z_2(x;u)$ by $z_2(x;u)$ and $z_2^{(0)}(x)$ by $l_0(x),$ correspondingly).

Since
$$
\tilde I(u)=\pi(u,u)\geq
(Q_1^{-1}u_1,u_1)_{H_1}+(\tilde Q_2^{-1}u_2,u_2)_{H_2}
\geq \alpha\|u\|_H^2 \quad \forall
u\in H,
$$
where $\alpha$ is a constant from \eqref{posdefme},
then the bilinear form  $\pi(u,v)$ and the linear functional
$L(u)$ satisfy the condition of Theorem 1.1 from \cite{BIBLlio}.
Therefore, by this theorem, there exists a unique element $
\hat{u}:=(\hat u_1,\hat u_2)
 \in H$ on which the minimum of the functional $I(u)$ is attained, i.e.
$I(\hat{u})=\inf_{u
\in H}I(u).$
This implies that for any fixed $
w=(w_1,w_2)\in H$ and $\tau \in \mathbb R$ the function $s(\tau):=I(\hat{
u}+\tau w)$ reaches its minimum at the point $\tau
=0,$ so that
\begin{equation} \label{z43f}
\frac {d}{d\tau} I(\hat {u}+\tau w)\left.
\right|_{\tau=0}=0.
\end{equation}
Taking into account that
$$z_2(x; \hat {u}+\tau w)=z_2(x;\hat u)+\tau z_2(x;w),$$
we obtain from \eqref{z43f}
$$
0=\frac{1}{2}\frac{d}{dt}I(\hat u+\tau
w)\Bigl.\Bigr|_{\tau=0}
$$
\begin{equation}\label{z43aaf}
=-(Q^{-1}(l_0-z_2(\cdot;\hat u)),z_2(\cdot;w ))_{L^2(D)}
+(\tilde Q_1^{-1}\hat u_1,w_1)_{H_1}+(\tilde Q_2^{-1}\hat u_2,w_2)_{H_2}.
\end{equation}

Further, introducing a pair of functions $(\mathbf {p}_1,p_2)\in H(\mbox{\rm
div},D)\times L^2(D)$ as a unique solution of the problem
\begin{multline}\label{r16at1rf}
\int_{D}((\mathbf A(x))^{-1}\mathbf {p}_1(x),\mathbf q_2(x))_{\mathbb
R^n}dx\\- \int_{D}p_2(x)\mbox{\rm div\,}\mathbf q_2(x)\,dx=0
 \quad
\forall \mathbf q_2\in H(\mbox{\rm div},D),
\end{multline}
\begin{multline} \label{r15aat1rf}
\int_{D}v_2(x)\mbox{\rm div\,}\mathbf p_1(x)\,dx+\int_{D}c(x)p_2(x)v_2(x)\,dx
\\=\int_{D}v_2(x)Q^{-1}(l_0-z_2(\cdot;\hat u))(x)\,dx \quad \forall v_2\in
L^2(D)
\end{multline}
and reasoning analogously as in the proof of Theorem 1, we arrive at the following relation
$$
-(Q^{-1}(l_0-z_2(\cdot;\hat u)),z_2(\cdot;w ))_{L^2(D)}
=-(w_1,C_1\mathbf {p}_1)_{H_1}-(w_2,C_2p_2)_{H_2}.
$$
By using (\ref{z43a}), we find from the latter equality
$$
(w_1,C_1\mathbf {p}_1)_{H_1}+(w_2,C_2p_2)_{H_2}=
(\tilde Q_1^{-1}\hat u_1,w_1)_{H_1}+(\tilde Q_2^{-1}\hat u_2,w_2)_{H_2},
$$
Whence, it follows that
$
\hat u_1=\tilde Q_1C_1\mathbf {p}_1,\quad\hat u_2=\tilde Q_2C_2p_2.
$
Substituting these expressions into
\eqref{113f} and \eqref{115f}
and setting $\mathbf z_1(x;\hat u)=:\hat {\mathbf z}_1(x),$
$z_2(x;\hat u)=:\hat z_2(x),$ we establish
that functions
$\hat {\mathbf z}_1,$ $\hat z_2$ and $\mathbf p_1,$ $p_2$ satisfy system of variational equations
\eqref{r13at1f}--\eqref{r15aat1f} and the validity of equalities \eqref{haturr}, \eqref{haturr1}. The unique solvability of this system follows from the existence of the unique minimum point
$\hat u$ of functional $I(u)$.

Now let us find the error of estimation. From (\ref{m20f}) at $u=\hat u$ and
(\ref{haturr1}), it follows
$$
\sigma^2=I(\hat u)
=(Q^{-1}(l_0-z_2(\cdot;\hat u)),l_0-
z_2(\cdot;\hat u))_{L^2(D)}
$$
$$
+ (\tilde
Q_1^{-1}\hat u_1,\hat u_1)_{H_1}+(\tilde Q_2^{-1}\hat u_2,\hat u_2)_{H_2}
$$
\begin{equation}\label{z43afff}
=(Q^{-1}(l_0-\hat z_2),l_0-
\hat z_2)_{L^2(D)}+(C_1\mathbf {p}_1,\tilde Q_1C_1\mathbf {p}_1)_{H_1}
+(C_2p_2,\tilde Q_2C_2p_2)_{H_2}.
\end{equation}
Setting in \eqref{r16at1f} and \eqref{r15aat1f}
$\mathbf q_2=\mathbf {\hat z}_1$ and $v_2=\hat z_2,$
we find
$$
\int_{D}((\mathbf A(x))^{-1}\mathbf {p}_1(x),\mathbf {\hat z}_1(x))_{\mathbb
R^n}dx- \int_{D}p_2(x)\mbox{\rm div\,}\mathbf {\hat z}_1(x)\,dx=0,
$$
$$
\int_{D}\hat z_2(x)\mbox{\rm div\,}\mathbf p_1(x)\,dx+\int_{D}c(x)p_2(x)\hat z_2(x)\,dx
=\int_{D}\hat z_2(x)Q^{-1}(l_0-\hat z_2)(x)\,dx.
$$
Setting in equations \eqref{r13at1f} and \eqref{r14at1f} $\mathbf q_1=\mathbf p_1$
and $v_1=p_2,$ we derive from two latter relations
$$
(Q^{-1}(l_0-\hat z_2),
l_0-\hat z_2)_{L^2(D)}=(Q^{-1}l_0,
l_0-\hat z_2)_{L^2(D)}-\int_{D}\hat z_2(x)\mbox{\rm div\,}\hat{\mathbf p}_1(x)\,dx
$$
$$
-\int_{D}c(x)p_2(x)\hat z_2(x)\,dx+\int_{D}((\mathbf A(x))^{-1}\mathbf {p}_1(x),\mathbf {\hat z}_1(x))_{\mathbb
R^n}dx- \int_{D}p_2(x)\mbox{\rm div\,}\mathbf {\hat z}_1(x)\,dx
$$
$$
=(l_0,Q^{-1}(
l_0-\hat z_2))_{L^2(D)}+\int_{D}(((\mathbf A(x))^{-1})^T\mathbf {\hat z}_1(x),\mathbf {p}_1(x))_{\mathbb
R^n}dx-\int_{D}z_2(x)\mbox{\rm div\,}\mathbf p_1(x)\,dx
$$
$$
- \int_{D}p_2(x)\mbox{\rm div\,}\mathbf {\hat z}_1(x)\,dx-\int_{D}c(x)p_2(x)\hat z_2(x)\,dx
=(l_0,Q^{-1}(
l_0-\hat z_2))_{L^2(D)}
$$
\begin{equation*}
-\int_{D}(C_1^tJ_{H_1}\tilde Q_1C_1\mathbf
{p}_1(x),\mathbf p_1(x))_{\mathbb R^n}\,dx-\int_{D}C_2^tJ_{H_2}\tilde Q_2C_2
p_2(x)p_2(x)\,dx
\end{equation*}
\begin{equation*}
=l(Q^{-1}(
l_0-\hat z_2))-(C_1\mathbf {p}_1,\tilde Q_1C_1\mathbf {p}_1)_{H_1}
-(C_2p_2,\tilde Q_2C_2p_2)_{H_2}.
\end{equation*}
From here and
\eqref{z43afff}, it follows representation \eqref{rllf} for the estimation error.
\end{proof}
In the following theorem we obtain another representation
for the guaranteed estimate $\widehat{\widehat {l(f)}}$ of
quantity $l(f)$ similar to \eqref{Altgxx}.
\begin{pred}\label{t6}
The guaranteed estimate of $l(f)$ has the form
\begin{equation}\label{Altf}
\widehat{\widehat {l(f)}}=l(\hat f),
\end{equation}
where
$\hat f(x)=f_0(x)-Q^{-1}\hat p_2(x)$ and
$\hat p_2\in L^2(\Omega,L^2(D))$ is determined from solution
of problem \eqref{13aaag}--\eqref{18aaagq}.

\end{pred}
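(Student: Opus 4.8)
The plan is to establish \eqref{Altf} by repeating, for the right-hand-side problem, the computation that produced the representation \eqref{Altgxx} in the proof of Theorem~\ref{at666}, with the system \eqref{r13at1}--\eqref{r15at1} replaced by its analogue \eqref{r13at1f}--\eqref{r15aat1f} and the functional $l(\mathbf j,\varphi)$ replaced by $l(f)$. I would start from the explicit guaranteed estimate furnished by Theorem~\ref{t4}: by \eqref{haturr}, \eqref{haturr1}, $\widehat{\widehat{l(f)}}=(y_1,\tilde Q_1C_1\mathbf p_1)_{H_1}+(y_2,\tilde Q_2C_2p_2)_{H_2}+(l_0-\hat z_2,f_0)_{L^2(D)}$. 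Using the self-adjointness of $\tilde Q_1,\tilde Q_2$, the Riesz identity and the definition of the transposes $C_1^t,C_2^t$, I would convert the $H_1$- and $H_2$-products into $L^2(D)$-products, obtaining $(y_1,\tilde Q_1C_1\mathbf p_1)_{H_1}=(\mathbf d_1,\mathbf p_1)_{L^2(D)^n}$ and $(y_2,\tilde Q_2C_2p_2)_{H_2}=(d_2,p_2)_{L^2(D)}$, where $\mathbf d_1=C_1^tJ_{H_1}\tilde Q_1y_1$ and $d_2=C_2^tJ_{H_2}\tilde Q_2y_2$ are precisely the data on the right of \eqref{13aaag}, \eqref{p14aaag}. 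Thus $\widehat{\widehat{l(f)}}=(\mathbf d_1,\mathbf p_1)_{L^2(D)^n}+(d_2,p_2)_{L^2(D)}+(l_0-\hat z_2,f_0)_{L^2(D)}$.

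The heart of the proof is two cross-testing computations. First I would put $(\mathbf q_1,v_1)=(\mathbf p_1,p_2)$ in \eqref{13aaag}, \eqref{p14aaag} and $(\mathbf q_2,v_2)=(\hat{\mathbf p}_1,\hat p_2)$ in \eqref{r16at1f}, \eqref{r15aat1f}. Adding the four identities and using the symmetry of the form $a$ (so that $\int_D(((\mathbf A)^{-1})^T\hat{\mathbf p}_1,\mathbf p_1)=\int_D((\mathbf A)^{-1}\mathbf p_1,\hat{\mathbf p}_1)$) together with the self-adjointness of $Q^{-1}$, the $\mathbf A^{-1}$-, divergence- and $c$-contributions telescope and I would obtain $(\mathbf d_1,\mathbf p_1)+(d_2,p_2)=-(Q^{-1}\hat p_2,l_0-\hat z_2)_{L^2(D)}+(C_1\hat{\mathbf j},\tilde Q_1C_1\mathbf p_1)_{H_1}+(C_2\hat\varphi,\tilde Q_2C_2p_2)_{H_2}$. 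Inserting this into the formula for $\widehat{\widehat{l(f)}}$ gives an intermediate representation of the same type as \eqref{g2hat2tg}.

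In the second computation I would put $(\mathbf q_2,v_2)=(\hat{\mathbf z}_1,\hat z_2)$ in \eqref{pr16at1g'}, \eqref{18aaagq} and $(\mathbf q_1,v_1)=(\hat{\mathbf j},\hat\varphi)$ in \eqref{r13at1f}, \eqref{r14at1f}. Eliminating once more the mixed $\mathbf A^{-1}$/divergence/$c$ terms via the symmetry of $a$, these four identities collapse to the key relation $(\hat z_2,Q^{-1}\hat p_2-f_0)_{L^2(D)}=-(C_1\hat{\mathbf j},\tilde Q_1C_1\mathbf p_1)_{H_1}-(C_2\hat\varphi,\tilde Q_2C_2p_2)_{H_2}$. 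Substituting this back into the intermediate representation, the two observation-operator terms and the products involving $\hat z_2$ cancel exactly, and there remains $\widehat{\widehat{l(f)}}=(l_0,f_0)_{L^2(D)}-(l_0,Q^{-1}\hat p_2)_{L^2(D)}=(l_0,f_0-Q^{-1}\hat p_2)_{L^2(D)}=l(\hat f)$ with $\hat f=f_0-Q^{-1}\hat p_2$, which is \eqref{Altf}. Since \eqref{13aaag}--\eqref{18aaagq} hold with probability $1$ and both systems are uniquely solvable by Theorems~\ref{at666} and \ref{t4}, the identity holds almost surely; note also that \eqref{pr16at1g'}, \eqref{18aaagq} show $(\hat{\mathbf j},\hat\varphi)$ to be exactly the state generated by the estimated right-hand side $\hat f$, a consistency check on the form of the answer.

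The step I expect to be most delicate is the sign bookkeeping: the second and fourth equations of \eqref{r13at1f}--\eqref{r15aat1f} carry the opposite sign to the second and fourth equations of \eqref{13aaag}--\eqref{18aaagq}, so the cancellation pattern differs from the one in Theorem~\ref{at666} and must be tracked term by term. One must also keep in mind that $\mathbf p_1,p_2,\hat z_2,\hat{\mathbf z}_1$ are deterministic while $\hat{\mathbf j},\hat\varphi,\hat{\mathbf p}_1,\hat p_2$ are random fields in $L^2(\Omega,H(\mathrm{div},D))$ and $L^2(\Omega,L^2(D))$, so each scalar identity is an almost-sure equality and the functions drawn from the random system are legitimate test functions. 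Apart from this, every manipulation reduces to the self-adjointness and transpose identities already used repeatedly in the preceding proofs.
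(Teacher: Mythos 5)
Your proposal is correct and follows essentially the same route as the paper's own proof: starting from the representation of $\widehat{\widehat{l(f)}}$ given by \eqref{haturr}, \eqref{haturr1}, cross-testing \eqref{13aaag}, \eqref{p14aaag} against \eqref{r16at1f}, \eqref{r15aat1f} to reach the intermediate identity \eqref{g2hat2t}, then cross-testing \eqref{pr16at1g'}, \eqref{18aaagq} against \eqref{r13at1f}, \eqref{r14at1f} to get the key relation $(\hat z_2,Q^{-1}\hat p_2-f_0)_{L^2(D)}=-(C_1\hat{\mathbf j},\tilde Q_1C_1\mathbf p_1)_{H_1}-(C_2\hat\varphi,\tilde Q_2C_2p_2)_{H_2}$ and substituting back. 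Your rewriting of the observation terms through $\mathbf d_1$, $d_2$ is a cosmetic variant of the paper's direct substitution, and your sign bookkeeping for \eqref{r14at1f}, \eqref{r15aat1f} is accurate (indeed more careful than the paper's statement of the key relation, which contains harmless typos writing $\hat{\mathbf p}_1,\hat p_2$ for $\mathbf p_1,p_2$).
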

\begin{proof}
From (\ref{haturr}) and (\ref{haturr1}), we have
$$
\widehat{\widehat {l(f)}}=(y_1,\hat u_1)_{H_1}+(y_2,\hat
u_2)_{H_2}+\hat c
$$
\begin{equation}\label{lpt}
=(y_1,\tilde Q_1C_1\mathbf
p_1)_{H_1}+(y_2,\tilde Q_2C_2
p_2)_{H_2}+
(l_0-\hat z_2,f_0)_{L^2(D)}.
\end{equation}
Putting in
(\ref{13aaag}) and (\ref{p14aaag}),
$\mathbf q_1=\mathbf p_1$ and $v_1=p_2,$ respectively, we come to the relations
\begin{multline} \label{13aaat}
\int_{D}(((\mathbf A(x))^{-1})^T\hat{\mathbf p}_1(x),\mathbf
p_1(x))_{\mathbb R^n}dx- \int_{D}\hat p_2(x)\mbox{\rm
div\,}\mathbf p_1(x)\,dx\\ =\int_{D}(C_1^tJ_{H_1}\tilde
Q_1(y_1-C_1\hat{\mathbf
j})(x),\mathbf p_1(x))_{\mathbb R^n}dx,
\end{multline}
\begin{multline} \label{p14aaat}
-\int_{D}p_2(x)\mbox{\rm div\,}\hat{\mathbf p}_1(x)\,dx
-\int_{D}c(x)\hat p_2(x)p_2(x)\,dx
\\=\int_D(C_2^tJ_{H_2}\tilde
Q_2(y_2-C_2\hat
\varphi)(x)p_2(x)\,dx,
\end{multline}
Putting $\mathbf q_2=\hat{\mathbf p}_1$ and $v_2=\hat p_2$ in
\eqref{r16at1f} and \eqref{r15aat1f},
we have
\begin{equation}\label{r16at1zt}
\int_{D}((\mathbf A(x))^{-1}\,\mathbf {p}_1(x),\hat{\mathbf p}_1(x))_{\mathbb
R^n}dx- \int_{D}p_2(x)\mbox{\rm div\,}\hat{\mathbf p}_1(x)\,dx=0,
\end{equation}
\begin{multline} \label{r15aat1zt}
-\int_{D}\hat p_2(x)\mbox{\rm div\,}\mathbf p_1(x)\,dx
-\int_{D}c(x)p_2(x)\hat p_2(x)\,dx
\\=-\int_{D}\hat p_2(x)Q^{-1}(l_0-\hat z_2)(x)\,dx.
\end{multline}
Relations \eqref{13aaat}--\eqref{r15aat1zt},
and \eqref{lpt} imply
\begin{equation}\label{g2hat2t}
\widehat{\widehat
{l(f)}}=(C_1\hat{\mathbf
j},\tilde Q_1C_1\mathbf p_1)_{H_1}
+(C_2\hat\varphi,\tilde Q_2C_2p_2)_{H_2}
\\-(Q^{-1}\hat p_2-f_0,(l_0-\hat z_2)_{L^2(D)}.
\end{equation}
Setting $\mathbf q_2=\hat{\mathbf z}_1,$ $v_2=\hat z_2$ and $\mathbf q_1=\hat{\mathbf j},$
$v_1=\hat \varphi$ in equations \eqref{pr16at1g'}, \eqref{18aaagq} and \eqref{r13at1f}, \eqref{r14at1f}, respectively, we obtain
\begin{equation}\label{pr16at1yt}
\int_{D}((\mathbf A(x))^{-1}\hat{\mathbf j}(x),\hat{\mathbf
z}_1(x))_{\mathbb R^n}\,dx- \int_{D}\hat\varphi(x)\mbox{\rm
div\,}\hat{\mathbf z}_1(x)\,dx=0,
\end{equation}
\begin{equation} \label{18aaayt}
-\int_{D}\hat z_2(x)\mbox{\rm div\,}\hat{\mathbf j}(x)\,dx
-\int_{D}c(x)\hat\varphi(x)\hat z_2(x)\,dx
=\int_{D}\hat z_2(x)(Q^{-1}\hat p_2(x)-f_0(x))\,dx,
\end{equation}
and
\begin{multline} \label{r13at1yt}
\int_{D}(((\mathbf A(x))^{-1})^T\hat{\mathbf z}_1(x),\hat{\mathbf j}(x))_{\mathbb R^n}dx- \int_{D}\hat z_2(x)\mbox{\rm
div\,}\hat{\mathbf j}(x)\,dx
\\=-\int_{D}(C_1^tJ_{H_1}\tilde Q_1C_1\mathbf
{p}_1(x),\hat{\mathbf j}(x))_{\mathbb R^n}\,dx,
\end{multline}
\begin{equation} \label{r14at1yt}
-\int_{D}\hat \varphi(x)\mbox{\rm div\,}\hat{\mathbf z}_1(x)\,dx
-\int_{D}c(x)\hat z_2(x)\hat \varphi(x)\,dx
=-\int_{D}(C_2^tJ_{H_2}\tilde Q_2C_2
p_2(x)\hat \varphi(x)\,dx.
\end{equation}
From \eqref{pr16at1yt} and \eqref{r14at1yt}, we deduce
$$
\int_{D}\hat z_2(x)(Q^{-1}\hat p_2(x)-f_0(x))\,dx
=-(\tilde Q_1C_1\hat{\mathbf p}_1,C_1\hat{\mathbf j}(x))_{H_1}
-(\tilde Q_2C_2\hat p_2,C_2\hat \varphi(x))_{H_2},
$$
whence,
by virtue of \eqref{g2hat2t},
it follows represetation \eqref{Altf}.
\end{proof}
{\bf Remark 2}.
Notice that in representation $l(\hat{f})$
for minimax estimate $\widehat{\widehat {l(f)}}$ the function $\hat{f}(x)=f_0(x)-Q^{-1}\hat p_2(x),
$ where $\hat p_2$ is defined from equations \eqref{13aaag}--\eqref{18aaagq}, can be taken as a good estimate
for unknown function $f$ entering the right-hand side of equation (\ref{eqsystem2}) (for explanations, see Remark 1).


\section{Approximate guaranteed estimates of linear functionals from right-sides of elliptic equations}


In this section we introduce the notion of approximate guaranteed estimates of $l(\mathbf j,\varphi)$ and prove their convergence to
$\widehat{\widehat {l(\mathbf j,\varphi)}}$.

Futher, as in section 6, the domain $D$ is supposed to be bounded and connected domain of $\mathbb R^n$
with
polyhedral boundary $\Gamma.$

Take an approximate minimax estimate of $l(f)$ as
\begin{equation}\label{ttrrh}
\widehat {l^{h}(f)}=(u_1^{h},y_1)_{H_1}+(u_2^{h},y_2)_{H_1}+c^{h},
\end{equation}
where $u_1^{h}=\tilde Q_1C_1\mathbf
{p}_1^{h},$ $u_2^{h}=\tilde Q_2C_2 p_2^{h},$ $c^{h}=\int_{D}
(l_0(x)-\hat z_2^{h}(x))f_0(x)\,dx,$
 and functions $\mathbf {\hat z}_1^{h},\mathbf {p}_1^{h}\in V_1^{h}$ and $\hat z_2^{h} ,p_2^{h}\in V_2^{h}$
are determined\footnote{The spaces $V_1^{h}$ and $V_2^{h}$ are described on page \pageref{page25}.} from the following uniquely solvable system of variational equalities:
\begin{multline} \label{r13at1hh}
\int_{D}(((\mathbf A(x))^{-1})^T\,\hat{\mathbf z}^{h}_1(x),\mathbf
q^{h}_1(x))_{\mathbb R^n}dx- \int_{D}\hat z^{h}_2(x)\mbox{\rm
div\,}\mathbf q^{h}_1(x)\,dx
\\=-\int_{D}(C_1^tJ_{H_1}\tilde Q_1C_1\mathbf
{p}^{h}_1(x),\mathbf q^{h}_1(x))_{\mathbb R^n}\,dx \quad\forall\mathbf
q^{h}_1\in V_1^{h},
\end{multline}
\begin{multline} \label{r14at1hh}
\int_{D}v^{h}_1(x)\mbox{\rm div\,}\hat{\mathbf z}^{h}_1(x)\,dx
+\int_{D}c(x)\hat z^{h}_2(x)v^{h}_1(x)\,dx
\\=\int_{D}(C_2^tJ_{H_2}\tilde Q_2C_2
p^{h}_2(x)v^{h}_1(x)\,dx\quad\forall v^{h}_1\in V_2^{h},
\end{multline}
\begin{equation}\label{r16at1hh}
\int_{D}((\mathbf A(x))^{-1}\mathbf {p}^{h}_1(x),\mathbf q^{h}_2(x))_{\mathbb
R^n}dx- \int_{D}p^{h}_2(x)\mbox{\rm div\,}\mathbf q^{h}_2(x)\,dx=0
 \quad
\forall \mathbf q^{h}_2\in V_1^{h},
\end{equation}
\begin{multline} \label{r15aat1hh}
\int_{D}v^{h}_2(x)\mbox{\rm div\,}\mathbf p^{h}_1(x)\,dx
+\int_{D}c(x)p^{h}_2(x)v^{h}_2(x)\,dx
\\=\int_{D}v^{h}_2(x)Q^{-1}(l_0-\hat z^{h}_2(\cdot))(x)\,dx \quad \forall v^{h}_2\in V_2^{h}.
\end{multline}
The quantity $\sigma^{h}=(I(u^{h}))^{1/2},$
where
$$
I(u^{h})
=(Q^{-1}(l_0-\hat z_2^{h}),l_0-
\hat z_2^{h})_{L^2(D)}+ (\tilde
Q_1^{-1}u^{h}_1,u^{h}_1)_{H_1}+(\tilde Q_2^{-1}u^{h}_2,u^{h}_2)_{H_2},
$$
is called the approximate error of the guaranteed estimation of $l(f)$.

\begin{pred}
Approximate guaranteed estimate $\widehat {l^{h}(f)}$ of $l(f)$
which is defined by \eqref{ttrrh}
can be represented in the form
 $
 \widehat {l^{h}(f)}=l(\hat f^{h})$,
where $\hat f^{h}=f_0(x)-Q^{-1}\hat p_2^{h}(x),$ and function $\hat p_2^{h}\in Q^{h}$ is determined from solution of problem
 \eqref{13aaagg}--\eqref{18aaagg}.
Approximate error of estimation has the form
$$
\sigma^{h}=\left(l(Q^{-1}(
l_0-\hat z_2^{h}))\right)^{1/2}.
$$
In addition,
$$
\lim_{h\to 0}\mathbb E|\widehat {l^{h}(f)}-\widehat{\widehat {l(f)}}|^2=0,
\quad
\lim_{h\to \infty}\sigma^{h}=\sigma,
$$
and
$$
\|\hat{\mathbf z}_1-
\hat{\mathbf z}_1^{h}\|_{H(\mbox{\rm \small div},D)}
+\|\hat z_2-\hat z_2^{h}\|_{L^2(D)}\to 0 \quad\mbox{\rm as}\quad h\to 0,
$$
$$
\|\mathbf p_1-\mathbf p_1^{h}\|_{H(\mbox{\rm \small div},D)}
+\|p_2-p_2^{h}\|_{L^2(D)}\to 0 \quad\mbox{\rm as}\quad h\to 0,
$$
$$
\|\hat{\mathbf j}-
\hat{\mathbf j}^{h}\|_{H(\mbox{\rm \small div},D)}
+\|\hat\varphi-\hat{\varphi}^{h}\|_{L^2(D)}\to 0 \quad\mbox{\rm as}\quad h\to 0,
$$
$$
\|\hat{\mathbf p}_1-\hat{\mathbf p}_1^{h}\|_{H(\mbox{\rm \small div},D)}
+\|\hat p_2-\hat p_2^{h}\|_{L^2(D)}\to 0 \quad\mbox{\rm as}\quad h\to 0.
$$
\end{pred}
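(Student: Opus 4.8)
The plan is to transcribe into the finite element setting the chain of arguments already completed for the exact problem in Theorems 6 and 7, and to reuse verbatim the convergence machinery of Theorem 3. I would organize the proof into three blocks: the two algebraic identities (the representation $\widehat{l^h(f)}=l(\hat f^h)$ and the error formula), the norm convergences of the discrete fields, and finally the mean-square convergence of the estimate together with $\sigma^h\to\sigma$.

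First I would establish the representation. Starting from the definition (\ref{ttrrh}) together with $\hat u_1=\tilde Q_1C_1\mathbf p_1^h$, $\hat u_2=\tilde Q_2C_2p_2^h$ and $c^h=\int_D(l_0-\hat z_2^h)f_0\,dx$, I reproduce the discrete version of (\ref{lpt}). Inserting $\mathbf q_1^h=\mathbf p_1^h$, $v_1^h=p_2^h$ into the discrete equations (\ref{13aaagg}), (\ref{p14aaagg}) and $\mathbf q_2^h=\hat{\mathbf p}_1^h$, $v_2^h=\hat p_2^h$ into (\ref{r16at1hh}), (\ref{r15aat1hh}), and observing that the sums of the left-hand sides coincide, yields the discrete analog of (\ref{g2hat2t}). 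A second round of substitutions, $\mathbf q_2^h=\hat{\mathbf z}_1^h$, $v_2^h=\hat z_2^h$ into (\ref{pr16at1gg}), (\ref{18aaagg}) and $\mathbf q_1^h=\hat{\mathbf j}^h$, $v_1^h=\hat\varphi^h$ into (\ref{r13at1hh}), (\ref{r14at1hh}), gives the discrete counterpart of (\ref{pr16at1yt})--(\ref{r14at1yt}), from which $\widehat{l^h(f)}=l(\hat f^h)$ with $\hat f^h=f_0-Q^{-1}\hat p_2^h$ follows exactly as in Theorem 7. The error formula $\sigma^h=(l(Q^{-1}(l_0-\hat z_2^h)))^{1/2}$ is obtained by repeating, with $V_1^h\times V_2^h$ in place of $H(\mathrm{div};D)\times L^2(D)$, the computation in the proof of Theorem 6 that led to (\ref{rllf}); all test-function choices there already lie in the finite element spaces, so the manipulation transfers unchanged.

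The core of the theorem is the convergence of the discrete solutions, and here I would follow Theorem 3 line by line. For fixed $u\in H$, let $(\mathbf z_1^{h_n}(\cdot;u),z_2^{h_n}(\cdot;u))\in V_1^{h_n}\times V_2^{h_n}$ solve the Galerkin discretization of (\ref{113f}), (\ref{115f}); uniform coercivity of $a^*$ on $\mathrm{Ker}\,B|_{V_1^h}$ gives unique solvability and the a priori bounds (\ref{m7nn'}), (\ref{m7nn2}) (whose right-hand sides now involve only $C_1^tJ_{H_1}u_1$ and $C_2^tJ_{H_2}u_2$), and completeness of $\{V_1^h\},\{V_2^h\}$ yields $(\mathbf z_1^{h_n}(\cdot;u),z_2^{h_n}(\cdot;u))\to(\mathbf z_1(\cdot;u),z_2(\cdot;u))$ strongly. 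Setting $I_n(u)=(Q^{-1}(l_0-z_2^{h_n}(\cdot;u)),l_0-z_2^{h_n}(\cdot;u))_{L^2(D)}+(\tilde Q_1^{-1}u_1,u_1)_{H_1}+(\tilde Q_2^{-1}u_2,u_2)_{H_2}$, the minimizers satisfy $I_n(u^{h_n})\le I_n(\hat u)\to I(\hat u)$, so by (\ref{posdefme}) the sequence $\{u^{h_n}\}$ is bounded; I extract $u^{h_{n_k}}\to\tilde u$ weakly in $H$, pass to the limit in the discrete state equations using the weak--strong pairing recalled in the footnote to (\ref{m200h})--(\ref{m203h}) to identify $(\mathbf z_1(\cdot;\tilde u),z_2(\cdot;\tilde u))$ as the weak limit of the discrete states, and invoke weak lower semicontinuity of the two nonnegative quadratic forms to obtain $I(\tilde u)\le\underline{\lim}\,I_{n_k}(u^{h_{n_k}})\le I(\hat u)$, forcing $\tilde u=\hat u$ by uniqueness of the minimizer. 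Weak convergence is then upgraded to strong convergence in the energy norm $\|v\|_{\tilde H}=(\tilde Q^{-1}v,v)_H^{1/2}$ exactly as in (\ref{tildd})--(\ref{contrr}), and to strong convergence in $H$ through (\ref{poz}). From $u^{h_n}\to\hat u$ strongly, the triangle-inequality splittings (\ref{r15aat1hggkkr}) and (\ref{jjhjj}), with the auxiliary problems (\ref{r16at1hggv}), (\ref{r15aat1hggv}) rewritten for the present right-hand side, deliver the stated norm convergences for $\hat{\mathbf z}_1^h,\hat z_2^h$ and for $\mathbf p_1^h,p_2^h$, and, after the same reduction as in Theorem 4, for $\hat{\mathbf j}^h,\hat\varphi^h$ and $\hat{\mathbf p}_1^h,\hat p_2^h$.

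Finally, the mean-square convergence proceeds as in part (ii) of Theorem 3. Writing $\widehat{l^{h_n}(f)}-\widehat{\widehat{l(f)}}=(u_1^{h_n}-\hat u_1,\tilde y_1)_{H_1}+(u_2^{h_n}-\hat u_2,\tilde y_2)_{H_2}+c^{h_n}-\hat c$ and splitting each $\tilde y_k=C_k(\cdot)+\tilde\eta_k$ into its deterministic and noise parts, the deterministic contribution is bounded by $C(\|u^{h_n}-\hat u\|_H^2+(c^{h_n}-\hat c)^2)\|\tilde f\|_{L^2(D)}^2$ using the a priori estimate for $(\tilde{\mathbf j},\tilde\varphi)$ and $\tilde f\in G_0$, while the noise contribution is bounded by $C\|u^{h_n}-\hat u\|_H^2$ using $\tilde\eta\in G_1$; both vanish since $u^{h_n}\to\hat u$ and, from weak convergence of $\hat z_2^{h_n}$, $c^{h_n}\to\hat c$. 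The limit $\sigma^h\to\sigma$ is then immediate from the error representations and the established strong convergence $\hat z_2^h\to\hat z_2$, $\mathbf p_1^h\to\mathbf p_1$, $p_2^h\to p_2$. I expect the genuinely delicate step to be the passage to the weak limit in the discrete state equations (\ref{113f}), (\ref{115f}), where the products coupling the weakly convergent controls and states with the simultaneously varying test functions must be handled via the weak--strong pairing, together with the subsequent upgrade from weak to strong convergence of $\{u^{h_n}\}$; everything else is a faithful transcription of the exact-problem computations into the finite element spaces.
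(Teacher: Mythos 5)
Your proposal is correct and follows essentially the same route as the paper: the paper's own ``proof'' is a one-line remark that the argument is analogous to the earlier theorems, i.e.\ exactly the program you execute --- transcribing the algebraic identities of Theorems~6--7 into the spaces $V_1^h\times V_2^h$ (where every test-function substitution remains admissible) and rerunning the Galerkin convergence machinery of Theorem~3 for the discrete controls, states, and the mean-square limits. Your write-up is in fact more detailed than the paper's, and correctly isolates the only delicate step (the weak--strong passage to the limit in the discrete state equations and the upgrade of $u^{h_n}\to\hat u$ from weak to strong convergence via the $(\tilde Q^{-1}\cdot,\cdot)_H$ norm).
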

\begin{proof}
The proof of this theorem is similar to the proofs of Theorems 4 and 5.
\end{proof}

 System of linear algebraic equations with respect to coefficients of expansions \eqref{elr}, \eqref{elr1} of functions $\hat{\mathbf z}_1^{h}$, $\hat z_2^{h}$, $\mathbf p_1^{h},$ and $p_2^{h}$, analogous to \eqref{r13at1hggtttt}--\eqref{r15aat1hggtttt}, can be also obtained for problem \eqref{r13at1hh}--\eqref{r15aat1hh}.
\section{Corollary from the obtained results}
Note in conclusion that the above results generalize, for the class of estimation
problems for systems described by boundary value problems considered in this
work, the results by A. G. Nakonechnyi
\cite{BIBLnak1}, \cite{nak22}.

To do this, suppose, as in these papers, that from observations of random variable of the form
\begin{equation}\label{observnak}
y_2=C_2\varphi+\eta_2,
\end{equation}
it is necessary to estimate the expression
\begin{equation}\label{linfnak}
l(\varphi):=  \int_{D}l_2(x)\varphi(x)\, dx
\end{equation}
in the class of estimates of the form
\begin{equation}\label{clasnak}
\widehat{l(\varphi)}:=
(y_2,u_2)_{H_2}+c,
\end{equation}
where $\varphi$ is a solution to the problem \eqref{1}, \eqref{3a}, $l_2$
is a given function from $L^2(D),$ $u_2\in
H_2,$ $c\in \mathbb R,$
$C_2\in\mathcal L(L^2(D),H_2)$ is a linear operator.

The case considered here corresponds to setting $C_1=0,$ $\eta_1=0,$ $\mathbf l_1=0,$ $u_1=0,$ respectively in
\eqref{observ}, \eqref{linf}, \eqref{clas}, \eqref{restr2},
and Lemma 1 can be stated as follows.
\begin{predl}
Finding the minimax estimate of
$l(\varphi)$ is equivalent to the problem of optimal control of the system described by
the equations
\begin{multline}\label{113c}
\int_{D}(((\mathbf A(x))^{-1})^T\mathbf z_1(x;u),\mathbf
q(x))_{\mathbb R^n}dx- \int_{D}z_2(x;u)\mbox{\rm div\,}\mathbf
q(x)\,dx=0\\
\quad \forall \mathbf q\in H(\mbox{\rm div},D),
\end{multline}
\begin{multline}\label{115c}
-\int_{D}v(x)\mbox{\rm div\,}\mathbf
z_1(x;u)\,dx-\int_{D}c(x)z_2(\cdot;u)v(x)\,dx\\=\int_{D}(l_2(x)-(C_2^tJ_{H_2}u_2)(x))v(x)\,dx\quad
\forall v\in L^2(D)
\end{multline}
with the cost function
\begin{equation}\label{m20c}
I(u)=(Q^{-1}z_2(\cdot;u),
z_2(\cdot;u))_{L^2(D)}
+(\tilde Q_2^{-1}u_2,u_2)_{H_2}\!\to
\inf_{u\in H_2}.
\end{equation}
\end{predl}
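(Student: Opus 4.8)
The plan is to observe that this statement is precisely the specialization of Lemma~1 obtained by setting $C_1=0$, $\eta_1=0$, $\mathbf l_1=0$ and $u_1=0$ in \eqref{observ}, \eqref{linf}, \eqref{clas} and \eqref{restr2}, so that the entire chain of equalities that led in the proof of Lemma~1 from \eqref{gh1} to the cost \eqref{m20} can be transcribed in the reduced variables $u=u_2$, $y=y_2$. In particular the adjoint system \eqref{113}, \eqref{115} degenerates into \eqref{113c}, \eqref{115c}: the right-hand side of the first equation vanishes because $\mathbf l_1$ and $C_1$ are both absent, while the only source now enters through $l_2-C_2^tJ_{H_2}u_2$ in the second equation; its unique solvability is inherited from the footnote following \eqref{115}.

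First I would start from \eqref{clasnak} and \eqref{observnak} and, using the transpose identity $(u_2,C_2\tilde\varphi)_{H_2}=(C_2^tJ_{H_2}u_2,\tilde\varphi)_{L^2(D)}$, rewrite the estimation error as
$$
l(\tilde\varphi)-\widehat{l(\tilde\varphi)}=(l_2-C_2^tJ_{H_2}u_2,\tilde\varphi)_{L^2(D)}-(u_2,\tilde\eta_2)_{H_2}-c,
$$
which is the one-component analogue of \eqref{gh1}. This is the step where I would check carefully that no surviving $u_1$- or $\eta_1$-term spoils the reduction.

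Next I would test \eqref{113c}, \eqref{115c} with $\mathbf q=\tilde{\mathbf j}$, $v=\tilde\varphi$, and the state equations \eqref{16at1v}, \eqref{14aat1v} with $\mathbf q=\mathbf z_1(\cdot;u)$, $v=z_2(\cdot;u)$, cross-cancelling the bilinear terms exactly as in \eqref{gh2}--\eqref{yxxy}. Since $((\mathbf A^{-1})^T\mathbf z_1,\tilde{\mathbf j})_{\mathbb R^n}=(\mathbf A^{-1}\tilde{\mathbf j},\mathbf z_1)_{\mathbb R^n}$, the first pair of identities forces $\int_D z_2\,\mbox{div}\,\tilde{\mathbf j}\,dx=\int_D\tilde\varphi\,\mbox{div}\,\mathbf z_1\,dx$, and substituting this together with \eqref{14aat1v} into the right-hand side of \eqref{115c} yields
$$
(l_2-C_2^tJ_{H_2}u_2,\tilde\varphi)_{L^2(D)}=-(\tilde f,z_2(\cdot;u))_{L^2(D)},
$$
the analogue of \eqref{yxxy}.

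Finally I would set $\xi:=-(\tilde f-f_0,z_2(\cdot;u))_{L^2(D)}-(f_0,z_2(\cdot;u))_{L^2(D)}-(u_2,\tilde\eta_2)_{H_2}-c$, split $\mathbb E|\xi|^2=|\mathbb E\xi|^2+\mathbf D\xi$ via \eqref{dispe}, and choose $c=-(f_0,z_2(\cdot;u))_{L^2(D)}$ to annihilate the deterministic part of $\mathbb E\xi$. The two remaining extremal problems are handled exactly as in \eqref{ner88} and \eqref{ner560}: the Cauchy--Bunyakovsky inequality together with the constraints defining $G_0$ and $G_1$ (with $\epsilon_1=\epsilon_3=1$) gives $\sup_{\tilde f\in G_0}[(\tilde f-f_0,z_2(\cdot;u))_{L^2(D)}]^2=(Q^{-1}z_2(\cdot;u),z_2(\cdot;u))_{L^2(D)}$ and $\sup_{(\tilde\eta_1,\tilde\eta_2)\in G_1}\mathbb E(\tilde\eta_2,u_2)_{H_2}^2=(\tilde Q_2^{-1}u_2,u_2)_{H_2}$, both attained on the explicit extremal elements exhibited there. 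Adding the two suprema reproduces the cost functional \eqref{m20c}, establishing the asserted equivalence. Because every step is a literal reduction of the corresponding step in Lemma~1, no genuine difficulty arises; the only care needed is the bookkeeping that confirms the degeneracy of \eqref{113c} and the complete disappearance of all $u_1$-dependence.
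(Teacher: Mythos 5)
Your proposal is correct and follows essentially the same route as the paper, which itself obtains this lemma by setting $C_1=0$, $\eta_1=0$, $\mathbf l_1=0$, $u_1=0$ in Lemma 1 and its proof (the chain from \eqref{gh1} through \eqref{yxxy} to \eqref{ner88}, \eqref{ner560}). Your careful bookkeeping of the vanishing $u_1$- and $\eta_1$-terms and of the degenerate right-hand side in \eqref{113c} matches the paper's intended specialization exactly.
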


It is easy to see that the second component $z_2(\cdot;u)$ of the solution $(\mathbf z_1(\cdot;u),z_2(\cdot;u))$ to this problem
belongs to the space $H_0^1(D)$ and is a weak solution to problem \eqref{1}--\eqref{3a}, i.e. it satisfies the integral identity
\begin{multline}\label{m20cd}
-(\mathbf A^T\,\mbox{\bf grad\,}z_2,\mbox{\bf grad\,}v)_{L^2(D)^n}-(cz_2,v)_{L^2(D)}\\=((l_2-(C_2^tJ_{H_2}u_2),v)_{L^2(D)}\quad \forall v\in H_0^1(D).
\end{multline}
Therefore, Lemma 3 takes the form:
\begin{predl}
Finding the minimax estimate of
$l(\varphi)$ is equivalent to the problem of optimal control of the system described by
equation \eqref{m20cd}
with the cost function
\eqref{m20c}.
\end{predl}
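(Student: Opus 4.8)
The plan is to invoke Lemma 3 and then identify the second component $z_2(\cdot;u)$ of the mixed system with the weak solution of \eqref{m20cd}. By Lemma 3, finding the minimax estimate of $l(\varphi)$ is equivalent to minimizing the cost \eqref{m20c} over $u\in H_2$ subject to the mixed variational equations \eqref{113c}, \eqref{115c}. Since \eqref{m20c} depends on $u$ only through $z_2(\cdot;u)$ and $u_2$, it suffices to show that for each fixed $u$ the component $z_2(\cdot;u)$ of the unique solution $(\mathbf z_1(\cdot;u),z_2(\cdot;u))$ of \eqref{113c}, \eqref{115c} is precisely the unique weak solution in $H_0^1(D)$ of \eqref{m20cd}. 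Once this identification is in hand, the admissible set $H_2$ and the value of the cost are unchanged, so the control problem attached to \eqref{113c}, \eqref{115c} coincides with the one attached to \eqref{m20cd}, and the assertion follows from Lemma 3 by transitivity.

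First I would extract the first-order information from \eqref{113c}, which states that $\int_{D}z_2(x;u)\,\mbox{div\,}\mathbf q(x)\,dx=\int_{D}(((\mathbf A(x))^{-1})^T\mathbf z_1(x;u),\mathbf q(x))_{\mathbb R^n}\,dx$ for all $\mathbf q\in H(\mbox{div};D)$. Testing against $\mathbf q\in C_0^\infty(D)^n$ shows that $z_2(\cdot;u)$ possesses a weak gradient, with $\mbox{\bf grad\,}z_2(\cdot;u)=-((\mathbf A)^{-1})^T\mathbf z_1(\cdot;u)\in L^2(D)^n$; hence $z_2(\cdot;u)\in H^1(D)$ and, equivalently, $\mathbf z_1(\cdot;u)=-\mathbf A^T\,\mbox{\bf grad\,}z_2(\cdot;u)$. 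Allowing now arbitrary $\mathbf q\in H(\mbox{div};D)$ and using the Green formula $\int_{D}z_2\,\mbox{div\,}\mathbf q\,dx+\int_{D}(\mbox{\bf grad\,}z_2,\mathbf q)_{\mathbb R^n}\,dx=\langle z_2,\mathbf q\cdot\mathbf n\rangle_{\Gamma}$ together with the surjectivity of the normal-trace operator $\mathbf q\mapsto \mathbf q\cdot\mathbf n|_{\Gamma}$ onto $H^{-1/2}(\Gamma)$ forces the trace of $z_2(\cdot;u)$ on $\Gamma$ to vanish, i.e.\ $z_2(\cdot;u)\in H_0^1(D)$. This is exactly the implicit boundary condition already observed in the paper when passing from \eqref{generalized}, \eqref{generalized1} to $\varphi\in H_0^1(D)$.

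Next I would substitute this into \eqref{115c}. For $v\in H_0^1(D)\subset L^2(D)$ the term $-\int_{D}v\,\mbox{div\,}\mathbf z_1(\cdot;u)\,dx$ integrates by parts with no boundary contribution (since $v|_{\Gamma}=0$) to $\int_{D}(\mbox{\bf grad\,}v,\mathbf z_1(\cdot;u))_{\mathbb R^n}\,dx=-\int_{D}(\mathbf A^T\,\mbox{\bf grad\,}z_2(\cdot;u),\mbox{\bf grad\,}v)_{\mathbb R^n}\,dx$, so that \eqref{115c} becomes precisely the integral identity \eqref{m20cd} for all $v\in H_0^1(D)$. Thus $z_2(\cdot;u)$ is a weak solution of \eqref{m20cd}, and since the bilinear form of \eqref{m20cd} is coercive on $H_0^1(D)$ (by the coercivity hypothesis on $\mathbf A$, hence on $\mathbf A^T$, together with $c\ge 0$ and the Poincar\'e inequality), the Lax--Milgram lemma guarantees that this weak solution is unique; consequently $z_2(\cdot;u)$ is the weak solution of \eqref{m20cd}. (The reverse passage is obtained by setting $\mathbf z_1:=-\mathbf A^T\,\mbox{\bf grad\,}z_2$, observing from \eqref{m20cd} that $\mbox{div\,}\mathbf z_1\in L^2(D)$ so $\mathbf z_1\in H(\mbox{div};D)$, and reversing the integrations by parts to recover \eqref{113c}, \eqref{115c}.) With the identification established for every $u\in H_2$, the two optimal control problems share the same admissible set and the same cost, and the conclusion follows from Lemma 3.

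The only delicate point, and hence the main obstacle, is the justification that testing \eqref{113c} against all of $H(\mbox{div};D)$ — rather than merely against compactly supported fields — encodes the homogeneous Dirichlet condition $z_2|_{\Gamma}=0$; this rests on the Green formula in $H(\mbox{div};D)$ and on the surjectivity of the normal trace onto $H^{-1/2}(\Gamma)$. Because this is exactly the fact already used in the excerpt to derive $\varphi\in H_0^1(D)$ from \eqref{generalized}, \eqref{generalized1}, it may be cited rather than reproved, and the remainder of the argument consists of routine integration by parts and the two uniqueness statements (Brezzi's theorem for the mixed problem, cf.\ the footnote to Lemma 1, and Lax--Milgram for \eqref{m20cd}).
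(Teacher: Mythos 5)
Your proposal is correct and follows essentially the same route as the paper: the paper derives this lemma from Lemma 3 by remarking (without proof, as ``it is easy to see'') that the second component $z_2(\cdot;u)$ of the mixed system \eqref{113c}, \eqref{115c} lies in $H_0^1(D)$ and satisfies the integral identity \eqref{m20cd}, which is exactly the identification you establish. Your writeup merely supplies the routine details the paper omits --- the distributional-gradient computation, the normal-trace argument for the boundary condition, the reverse passage, and the two uniqueness statements via Brezzi and Lax--Milgram --- and all of these steps are sound.
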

Theorems 1 and 2 are transformed into the following assertions.
\begin{pred}
There exists a unique minimax estimate of $l(\mathbf {j},\varphi)$
which has the form
\begin{equation}\label{exactest1}
\widehat{\widehat {l(\mathbf {j},\varphi)}}=(y_2,\hat u_2)_{H_2}+\hat c,
\end{equation}
where
\begin{equation}\label{hat1}
\hat c=-\int_{D}\hat
z_2(x)f_0(x)\,dx,\quad \hat u_2= \tilde Q_2C_2 p_2,
\end{equation}
and the functions $\hat z_2$ and $p_2\in H^1_0(D)$ are determined from solution
of the following problem:
\begin{multline} \label{r13at11}
\!\!-\int_{D}(\mathbf A^T(x)\,\mbox{\bf grad\,}\hat z_2(x),\mbox{\bf grad\,}v_1(x))_{\mathbb R^n}\,dx-\int_{D}c(x)\hat z_2(x)v_1(x)\,dx\\
\!=\!\int_{D}(l_2(x)-C_2^tJ_{H_2}\tilde Q_2C_2
p_2(x))v_1(x)\,dx\,\,\,\forall v_1\in H_0^1(D),
\end{multline}
\begin{multline} \label{r15at11}
\!\!-\int_{D}(\mathbf A(x)\,\mbox{\bf grad\,}p_2(x),\mbox{\bf grad\,}v_2(x))_{\mathbb R^n}\,dx-\int_{D}c(x)p_2(x)v_2(x)\,dx
\\=\int_{D}v_2(x)Q^{-1}\hat z_2(x)\,dx \quad \forall v_2\in
H_0^1(D).
\end{multline}
Problem
\eqref{r13at11}--\eqref{r15at11} is uniquely solvable. The error of estimation $\sigma$ is given by the expression
\begin{equation}\label{rll1}
\sigma=l(p_2)^{1/2}.
\end{equation}
\end{pred}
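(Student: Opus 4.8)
The plan is to obtain this statement as a direct specialization of Theorem 1 to the degenerate observation scheme $C_1=0$, $\eta_1=0$, $\mathbf l_1=0$, $u_1=0$, combined with the flux elimination already indicated through the integral identity (\ref{m20cd}). First I would insert these data into Theorem 1. The class (\ref{clas}) collapses to (\ref{clasnak}), so the representation (\ref{exactest}) reduces to (\ref{exactest1}) and the constants in (\ref{hat}) reduce to (\ref{hat1}): the term $\hat u_1=\tilde Q_1C_1\mathbf p_1$ disappears because $C_1=0$, while $\hat u_2=\tilde Q_2C_2 p_2$ and $\hat c=-\int_{D}\hat z_2 f_0\,dx$ survive unchanged. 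Existence and uniqueness of the minimax estimate are thus inherited verbatim from Theorem 1.

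It then remains to show that, when $C_1=0$ and $\mathbf l_1=0$, the mixed system (\ref{r13at1})--(\ref{r15at1}) for $(\hat{\mathbf z}_1,\hat z_2,\mathbf p_1,p_2)$ is equivalent to the primal system (\ref{r13at11})--(\ref{r15at11}) for $(\hat z_2,p_2)$. With $\mathbf l_1=0$ and $C_1=0$ the right-hand side of (\ref{r13at1}) vanishes, so (\ref{r13at1}) becomes homogeneous; exactly as in the passage leading to (\ref{m20cd}), this forces $\hat z_2\in H_0^1(D)$ and $\hat{\mathbf z}_1=-\mathbf A^T\,\mbox{\bf grad\,}\hat z_2$. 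Substituting this flux into (\ref{r14at1}) and integrating by parts (the boundary term drops since $v_1\in H_0^1(D)$) turns (\ref{r14at1}) into (\ref{r13at11}). The same argument applied to the homogeneous equation (\ref{r16at1'}) yields $p_2\in H_0^1(D)$ and $\mathbf p_1=-\mathbf A\,\mbox{\bf grad\,}p_2$, and substitution into (\ref{r15at1}) produces (\ref{r15at11}). Conversely, given a solution of the primal system, defining the fluxes by these gradient formulas recovers a solution of the mixed system, since $\hat z_2,p_2\in H_0^1(D)$ solve elliptic equations with $L^2$ right-hand sides and hence $\mathbf A^T\,\mbox{\bf grad\,}\hat z_2$ and $\mathbf A\,\mbox{\bf grad\,}p_2$ lie in $H(\mbox{div};D)$. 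Unique solvability of (\ref{r13at11})--(\ref{r15at11}) therefore follows from the already established unique solvability of (\ref{r13at1})--(\ref{r15at1}) through this equivalence; alternatively it follows directly from the Lax--Milgram lemma applied to the coupled primal forms, which are coercive on $H_0^1(D)$ by the ellipticity of $\mathbf A$ together with $c\ge 0$.

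Finally, the error formula is read off from (\ref{rll}): since $\mathbf l_1=0$, the functional $l(\mathbf p_1,p_2)=\int_{D}(\mathbf l_1,\mathbf p_1)_{\mathbb R^n}\,dx+\int_{D}l_2 p_2\,dx$ reduces to $\int_{D}l_2 p_2\,dx=l(p_2)$ in the sense of (\ref{linfnak}), giving $\sigma=l(p_2)^{1/2}$, which is (\ref{rll1}). I expect the only delicate point to be the sign and regularity bookkeeping in the flux--gradient elimination: one must check that the transpose structure $(\mathbf A^{-1})^T$ in (\ref{r13at1}) produces precisely the form $-\int_{D}(\mathbf A^T\,\mbox{\bf grad\,}\hat z_2,\mbox{\bf grad\,}v_1)_{\mathbb R^n}\,dx$ appearing in (\ref{r13at11}), and that $\mathbf A^T\,\mbox{\bf grad\,}\hat z_2\in H(\mbox{div};D)$ so that (\ref{r13at1}) remains meaningful; both are guaranteed by the $H_0^1$-regularity of $\hat z_2$ and the boundedness and ellipticity of $\mathbf A$.
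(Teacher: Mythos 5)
Your proposal follows the paper's own route: the paper derives this theorem by setting $C_1=0$, $\eta_1=0$, $\mathbf l_1=0$, $u_1=0$ in Theorem 1 and eliminating the flux unknowns exactly as in the passage leading to \eqref{m20cd}, asserting the transformation without detail, while you correctly supply what it leaves implicit --- the relations $\hat{\mathbf z}_1=-\mathbf A^T\,\mbox{\bf grad\,}\hat z_2$ and $\mathbf p_1=-\mathbf A\,\mbox{\bf grad\,}p_2$ forced by the homogeneous equations \eqref{r13at1} and \eqref{r16at1'}, the integration by parts turning \eqref{r14at1}, \eqref{r15at1} into \eqref{r13at11}, \eqref{r15at11}, the converse recovery of $H(\mbox{div};D)$-regularity of the fluxes, and the collapse of \eqref{rll} to \eqref{rll1} when $\mathbf l_1=0$. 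The one inaccuracy is your parenthetical alternative: the Lax--Milgram lemma does not apply directly to the coupled primal system, since the combined bilinear form contains the sign-indefinite coupling terms $(C_2^tJ_{H_2}\tilde Q_2C_2p_2,v_1)_{L^2(D)}$ and $(Q^{-1}\hat z_2,v_2)_{L^2(D)}$ and so is not coercive on $H_0^1(D)\times H_0^1(D)$ in general (ellipticity of $\mathbf A$ and $c\ge 0$ control only the diagonal blocks); unique solvability must rest, as in your main argument and in the paper, on the equivalence with \eqref{r13at1}--\eqref{r15at1} and the existence of the unique minimizer of $I(u)$.
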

\begin{pred}\label{t80}
The minimax estimate of $l({\mathbf j},\varphi)$ has the form
\begin{equation*}\label{Altg}
\widehat{\widehat {l({\mathbf j},\varphi)}}=l(\hat{\mathbf j},\hat \varphi)),
\end{equation*}
where the function
$\hat \varphi\in H_0^1(D)$ is determined from solution
of the following problem:
\begin{multline} \label{13aaag1}
\!\!-\int_{D}(\mathbf A^T(x)\,\mbox{\bf grad\,}\hat p_2(x),\mbox{\bf grad\,}v_1(x))_{\mathbb R^n}\,dx-\int_{D}c(x)\hat p_2(x)v_1(x)\,dx\\
=\int_DC_2^tJ_{H_2}\tilde
Q_2(y_2-C_2\hat
\varphi)(x)v_1(x)\,dx\quad\forall v_1\in H_0^1(D),
\end{multline}
\begin{multline} \label{18aaagq1}
-(\mathbf A^T(x)\,\mbox{\bf grad\,}\hat \varphi(x),\mbox{\bf grad\,}v_2(x))_{\mathbb R^n}\,dx-\int_{D}c(x)\hat\varphi(x)v_2(x)\,dx
\\=\int_{D}v_2(x)(Q^{-1}\hat p_2(x)-f_0(x))\,dx \quad
\forall v_2\in H_0^1(D),
\end{multline}
where equalities \eqref{13aaag1} and \eqref{18aaagq1} are fulfilled with probability
$1.$ Problem \eqref{13aaag1}, \eqref{18aaagq1} is uniquely
solvable.

The random fields
$\hat\varphi$ and $\hat p_2,$ whose realizations
satisfy equations \eqref{13aaag1} and \eqref{18aaagq1},
belong to the space
$L^2(\Omega,H_0^1(D))$.
\end{pred}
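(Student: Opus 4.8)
The plan is to obtain Theorem \ref{t80} as the specialization of Theorem \ref{at666} to the degenerate observation setting in which the first channel is switched off, i.e. $C_1=0$, $\eta_1=0$, $\mathbf l_1=0$, and $u_1=0$. The representation \eqref{Altgxx} of the guaranteed estimate carries over unchanged, so the whole task is to show that, under these substitutions, the mixed (first-order) system \eqref{13aaag}--\eqref{18aaagq} furnished by Theorem \ref{at666} collapses to the second-order variational problem \eqref{13aaag1}, \eqref{18aaagq1} posed in $H_0^1(D)$. The only substantive input is the equivalence between the mixed formulation and the ordinary weak formulation that was recorded in the passage leading to \eqref{m20cd}; everything else is elimination of the auxiliary flux variables.

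First I would insert $C_1^t=0$ (a consequence of $C_1=0$), $\eta_1=0$, $\mathbf l_1=0$, $u_1=0$ into \eqref{13aaag}--\eqref{18aaagq}. The right-hand side of \eqref{13aaag} then vanishes, so \eqref{13aaag} and \eqref{pr16at1g'} become the homogeneous constitutive identities
\begin{equation*}
\int_{D}(((\mathbf A(x))^{-1})^T\hat{\mathbf p}_1,\mathbf q_1)_{\mathbb R^n}\,dx=\int_{D}\hat p_2\,\mbox{div\,}\mathbf q_1\,dx,\quad \int_{D}((\mathbf A(x))^{-1}\hat{\mathbf j},\mathbf q_2)_{\mathbb R^n}\,dx=\int_{D}\hat\varphi\,\mbox{div\,}\mathbf q_2\,dx,
\end{equation*}
valid for all $\mathbf q_1,\mathbf q_2\in H(\mbox{div},D)$. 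Exactly as in the remark following \eqref{generalized1}, these identities force $\hat p_2,\hat\varphi\in H_0^1(D)$ together with the relations $\hat{\mathbf p}_1=-\mathbf A^T\,\mbox{\bf grad\,}\hat p_2$ and $\hat{\mathbf j}=-\mathbf A\,\mbox{\bf grad\,}\hat\varphi$. Substituting these two expressions into the divergence equations \eqref{p14aaag} and \eqref{18aaagq} (whose right-hand sides, involving only the $C_2$ channel and $Q^{-1}\hat p_2-f_0$, are left untouched) and invoking the integration-by-parts identity $-\int_Dv_1\,\mbox{div\,}\hat{\mathbf p}_1\,dx=-\int_D(\mathbf A^T\,\mbox{\bf grad\,}\hat p_2,\mbox{\bf grad\,}v_1)_{\mathbb R^n}\,dx$ for $v_1\in H_0^1(D)$ (and the analogous identity for $\hat{\mathbf j}$), I arrive precisely at \eqref{13aaag1} and \eqref{18aaagq1}.

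It then remains to transfer unique solvability and the probabilistic regularity, and both follow from Theorem \ref{at666} because the reduction just performed is an equivalence. Conversely, given a solution $(\hat p_2,\hat\varphi)$ of \eqref{13aaag1}, \eqref{18aaagq1}, the fields $\hat{\mathbf p}_1:=-\mathbf A^T\,\mbox{\bf grad\,}\hat p_2$ and $\hat{\mathbf j}:=-\mathbf A\,\mbox{\bf grad\,}\hat\varphi$ belong to $H(\mbox{div},D)$ and solve the specialized mixed system, so a second solution of the reduced problem would contradict the uniqueness asserted in Theorem \ref{at666}. The statements that \eqref{13aaag1}, \eqref{18aaagq1} hold with probability $1$ and that $\hat\varphi,\hat p_2\in L^2(\Omega,H_0^1(D))$ are inherited from the $L^2(\Omega,\cdot)$ formulation used in the proof of Theorem \ref{at666}, once the elimination of the flux is carried out inside the Bochner spaces.

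The main obstacle is not computational but conceptual: one must justify carefully that the homogeneous constitutive identities genuinely confine $\hat p_2$ and $\hat\varphi$ to $H_0^1(D)$ — so that the homogeneous Dirichlet condition is encoded implicitly — and that the flux can be eliminated without loss of information. This is exactly the content of the remark after \eqref{generalized1}, resting on the coercivity of $\mathbf A$ and on $\mbox{\rm Im}\,B=L^2(D)$; once these are granted, the rest is routine integration by parts and transcription from Theorem \ref{at666}.
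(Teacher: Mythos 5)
Your proposal is correct and takes essentially the same route as the paper: the paper obtains this theorem precisely by specializing Theorem \ref{at666} to $C_1=0$, $\eta_1=0$, $\mathbf l_1=0$, $u_1=0$ and using the observation recorded just before Lemma 4 (and in the remark following \eqref{generalized1}) that the homogeneous constitutive equation confines the scalar unknowns to $H_0^1(D)$, after which the flux is eliminated by integration by parts, exactly as you do. One small remark: your elimination gives $\hat{\mathbf j}=-\mathbf A\,\mbox{\bf grad\,}\hat\varphi$, so the second equation should carry $\mathbf A$ rather than the $\mathbf A^T$ printed in \eqref{18aaagq1} — an apparent typo in the paper (compare \eqref{r15at11}), so your computation is consistent with the intended statement.
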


\addcontentsline{toc}{section}{REFERENCES}
\renewcommand{\refname}
{\bf \begin{center} REFERENCES
\end{center}}


\begin{thebibliography}{99}




\bibitem{BIBLlio} Lions, J.L.: Optimal Control of Systems Described by Partial Differential Equations, Springer-Verlag Berlin $\cdot$ Heidelberg $\cdot$ New York (1971)





\bibitem{Vah}
Vahaniya, N. N., Tarieladze, V. I. and Chobanyan, S. A.: Probability
Distributions in Banach Spaces. Kluwer Acad. Publ., Dordrecht (1989)




\bibitem{BIBLkras} Krasovskii, N.N.: Theory of Motion Control. Nauka, Moscow  (1968)


\bibitem{BIBLkurzh} Kurzhanskii, A.B.: Control and Observation under Uncertainties. Nauka, Moscow  (1977)




\bibitem{BIBLnak1} Nakonechnyi, O.G.: Minimax Estimation of Functionals of Solutions to Variational Equations in Hilbert Spaces. Kiev State University, Kiev (1985)



\bibitem{nak22} Nakonechnyi, O.G.:
Optimal Control and Estimation for Partial Differential Equations. Kyiv University, Kyiv (2004)

\bibitem{nak2} Nakonechniy, O. G., Podlipenko, Yu. K., Shestopalov, Yu. V.: Estimation of parameters of boundary value problems for linear ordinary differential equations with uncertain data. arXiv:0912.2872v1, 1-72, (2009)




\bibitem{BIBLhat} Hutson, V., Pym, J., Cloud, M.: Applications of Functional Analysis and Operator Theory: Second Edition. Elsevier, Amsterdam (2005)

\bibitem{Ernst} Ernst, O. G.,  Powell, C. E., Silvester, D. J., and Ullman, E.: Efficient solvers
for a linear stochastic Galerkin mixed formulation of diffusion problems with
random data. Technical Report 2007.126, Manchester Institue for Mathematical Sciences, School of Mathematics (2007)


\bibitem{Brezzi} Brezzi, F., Fortin, M.: Mixed and Hybrid Finite Element Methods. Springer-Verlag (1991)


\bibitem{BIBLNedelec} Nedelec, J.C.: Mixed finite element in $\mathbb R^3.$ Numer. Math., 35, pp. 315-341 (1980).

\bibitem{BIBLRaviart} Raviart, P.A., Thomas, J.M.: A mixed finite element method for second order
elliptic problems, Mathematical Aspects of the Finite Element Method, Lecture Notes in Math. 606, Springer-Verlag, New York (1977)

\bibitem{Gatica} Gatica G.N.: A Simple Introduction to the Mixed Finite Element Method: Theory and Applications. Springer (2014)

\bibitem{Schwab} Schwab, C., Gittelson, C.J.: Sparse tensor discretization of high-dimensional parametric and
stochastic PDEs. Acta Numerica. doi: 10.1017/S0962492911000055


\bibitem{Badr} Badriev, I.B., Karchevsky, M.M.: Duality methods in applied problems. Publ. of Kazan State University, Kazan (1987)

\bibitem{Yosida} Yosida, K.: Functional analysis, 6 ed. Springer (1980)

\bibitem{Hoppe} Hoppe, R.H.W., Wohlmuth, B.: Adaptive Multilevel Techniques for Mixed Finite Element Discretizations of Elliptic Boundary Value Problems.
SIAM J. Numer. Anal., vol. 34, no 4, (1997)

\bibitem{Pod} Podlipenko, Y., Shestopalov, Y.: Guaranteed estimates of functionals from solutions and data of interior Maxwell problems under uncertainties. In Inverse Problems and Large-Scale Computations, Springer Proc. in Mathematics \& Statistics, 2013,  Vol. 52. pp. 135-168.

\end{thebibliography}
\end{document}